\documentclass[reqno]{amsart}

\usepackage[margin= 1.0 in]{geometry}
\usepackage{amsfonts}
\usepackage{amsthm}
\usepackage{amsmath}
\usepackage{mathrsfs}
\usepackage{graphicx}
\usepackage{amssymb}
\usepackage{xtab}
\usepackage{tikz-cd} 
\usepackage{ytableau}
\usepackage{xfrac}
\usepackage{stackrel}
\usepackage{color}
\usepackage{enumerate}
\usepackage{mathtools}
\usepackage{hyperref}
\hypersetup{
   colorlinks=true,
   citecolor=blue,
   linkcolor=violet
   }

\tikzcdset{scale cd/.style={every label/.append style={scale=#1},
    cells={nodes={scale=#1}}}}
\setcounter{secnumdepth}{3}

\DeclareMathOperator{\GW}{\mathbb{G}W} 
\newcommand{\Gr}{\operatorname{Gr}}

\newtheorem{theorem}{Theorem}[section]
\newtheorem{corollary}[theorem]{Corollary}
\newtheorem{proposition}[theorem]{Proposition}
\newtheorem{lemma}[theorem]{Lemma}

\theoremstyle{definition}
\newtheorem{remark}[theorem]{Remark}

\newtheorem{definition}[theorem]{Definition}

\usepackage{cleveref}

\makeatletter
\let\@wraptoccontribs\wraptoccontribs
\makeatother

\usepackage[all]{xy}

\newcommand{\D}{\mathcal{D}}

\newcommand{\Z}{\mathbb{Z}}
\newcommand{\Spec}{\operatorname{Spec}}

\newcommand{\A}{\mathcal{A}}

\newcommand{\T}{\mathcal{T}}
\newcommand{\quis}{\operatorname{quis}}

\newcommand{\B}{\mathcal{B}}
\newcommand{\sPerf}{\operatorname{sPerf}}

\newcommand{\fgt}{\operatorname{fgt}}
\newcommand{\I}{\mathcal{I}}
\newcommand{\can}{\operatorname{can}}

\newcommand{\E}{\mathcal{E}}
\newcommand{\pp}{\mathbb{P}}
\newcommand{\Kosz}{\operatorname{Koz}}
\newcommand{\cone}{\operatorname{cone}}
\newcommand{\Qcoh}{\operatorname{Qcoh}}
\newcommand{\calK}{\mathcal{K}}
\newcommand{\calH}{\mathcal{H}}
\newcommand{\calL}{\mathcal{L}}
\newcommand{\K}{\mathbb{K}}

\setcounter{tocdepth}{1}


\makeatletter
\renewcommand\subsection{%
  \@startsection{subsection}{2}{\z@}%
    {3.0ex \@plus 1ex \@minus .2ex}
    {1.5ex \@plus .2ex}
    {\normalfont\bfseries}
}
\makeatother

\begin{document}

\title{On the Hermitian K-theory of Grassmannians}

\begin{abstract}
We compute the $\mathbb{G}W$-spectrum (Karoubi--Grothendieck--Witt spectrum) of Grassmannians over divisorial schemes defined over fields of characteristic zero, and, as a corollary, determine their stabilized $\mathbb{L}$-theory spectrum. As part of our computation, we establish split fibration sequences in $\mathbb{K}$-theory, $\mathbb{G}W$-theory and $\mathbb{L}$-theory.  
\end{abstract}

\author{Sunny Sood and Chunkai Xu, \\ \vspace{-1ex} \\ with an appendix by Marco Schlichting}

\address{Mathematics Institute \\
 University of Warwick \\
 CV4 7AL \\
 United Kingdom}

\email{Sunny.Sood.1@warwick.ac.uk}

\address{Mathematics Institute \\
 University of Warwick \\
 CV4 7AL \\
 United Kingdom}

\email{Chunkai.Xu@warwick.ac.uk}

\address{Mathematics Institute \\
 University of Warwick \\
 CV4 7AL \\
 United Kingdom}

\email{m.schlichting@warwick.ac.uk}

\keywords{Hermitian K-theory, Grassmannians}
\subjclass[2020]{14M15, 19G38}

\maketitle
\tableofcontents

\section{Introduction}


Hermitian K-theory, which may be thought of as the K-theory of quadratic forms, has been the subject of significant attention over recent years. As an invariant of algebraic varieties, Hermitian K-theory has become an important invariant in \textit{motivic homotopy theory}. For example, Hermitian K-theory has been used to compute the endomorphism ring of the motivic sphere spectrum \cite{morel2012a1} (see also \cite{rondigs2019first}) and was an important tool in the Asok-Fasel vector bundle classification program \cite{MR3273577}. Its connection to K-theory and L-theory via the fundamental fibre sequence (see  \cite[Theorem 8.13]{schlichting17}, \cite{calmes2025hermitian}) also enriches the study of the subject, as these theories have deep and diverse applications throughout mathematics. We refer the reader to \cite{marlowe2024higher} and \cite{calmes2023hermitian} for a summary of the subject's history and a survey of current applications. However, explicit computations of Hermitian K-theory remain relatively limited. Some examples include projective spaces \cite{karoubi2021grothendieck}, projective bundles over divisorial scheme \cite{rohrbach2022projective}, and Grassmannians over regular base \cite{huang2023connecting}. 

In this paper, we present a computation for the Hermitian K-theory of Grassmannians over divisorial scheme defined over field of characteristic zero. Our proof uses the generalized additivity theorem for Hermitian K-theory, first introduced in \cite{karoubi2021grothendieck}, the full exceptional collection on the derived category of Grassmannians constructed by Kapranov in \cite{kapranov1988derived} (and since generalised by \cite{buchweitz2015derived} and \cite{efimov2017derived}), and the base change for semi-orthogonal decomposition by \cite{kuznetsov_base_2011}. This computation generalises the result for projective space computed in \cite{rohrbach2022projective} and \cite{karoubi2021grothendieck}, and coincides with the computation obtained in \cite{huang2023connecting}, which was done for Grassmannians over a regular base. 

As a corollary, we also compute the stabilized $\mathbb{L}$-theory spectrum of Grassmannians and verify that the corresponding Witt groups coincide with the computation obtained in \cite{balmer2012witt}, which was done for Grassmannians over a regular, noetherian and separated scheme over $\mathbb{Z}[\frac{1}{2}]$ of finite krull dimension. As part of our computation, we establish split fibration sequences in $\mathbb{K}$-theory, $\mathbb{G}W$-theory and $\mathbb{L}$-theory.

Our computation is summarised by the following two results: 

\begin{theorem}[See \Cref{thm:compute} below]
Let $S$ be a divisorial scheme defined over a field of characteristic zero. Let $k \leq n$ and let $\Gr^{k,n}_{S}$ denote the Grassmannian over $S$. Let $\mathcal{Q}$ denote the tautological bundle of rank $k$ on $\Gr^{k,n}_{S}$. 
Then, 
\[ 
\mathbb{G}W^{r}(\Gr^{k,n}_{S},(\det\mathcal{Q})^{\otimes (n-k)}) \cong 
\begin{cases}
\mathbb{K}(S)^{\oplus \frac{1}{2}R(k,n-k)} & k(n-k) \,\, \text{odd,} \\
\mathbb{G}W^{r - 2}(S)^{\oplus S(k,n-k)} \oplus \mathbb{K}(S)^{\oplus \frac{1}{2}A(k,n-k)} & (n-k) \,\,  \text{odd and } k\equiv 2 (\text{mod }4),\\
\mathbb{G}W^{r}(S)^{\oplus S(k,n-k)} \oplus \mathbb{K}(S)^{\oplus \frac{1}{2}A(k,n-k)} &  \text{otherwise,} \\
\end{cases}
 \]
   where $R(k,l)$ denotes the number of Young diagrams of height at most $k$ and width at most $l$, $S(k,l)$ denotes the number of symmetric partitions in such a collection and $A(k,l)$ denotes the number of asymmetric partitions in such a collection.
\end{theorem}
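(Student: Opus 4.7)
The plan is to combine Kapranov's full exceptional collection $\{\Sigma^\lambda \mathcal{Q}\}_{\lambda \subseteq k \times (n-k)}$ on $\mathcal{D}^b(\text{Gr}^{k,n}_{\mathbb{F}})$ with the generalized additivity theorem of \cite{karoubi2021grothendieck} for Hermitian K-theory. The collection has $R(k, n-k)$ terms, and each exceptional piece is equivalent to $\mathcal{D}^b(\mathbb{F})$; this already yields the K-theoretic identification $\mathbb{K}(\text{Gr}^{k,n}_{\mathbb{F}}) \cong \mathbb{K}(\mathbb{F})^{\oplus R(k,n-k)}$. The task is to refine this to $\mathbb{G}W^r$, which requires controlling how the duality permutes the exceptional pieces.

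The role of the twist by $(\det\mathcal{Q})^{\otimes(n-k)}$ is to make the induced duality $X \mapsto \mathcal{H}om(X, (\det\mathcal{Q})^{\otimes(n-k)})[r]$ preserve the exceptional collection setwise. Using the Schur-functor complement identity
\[
\Sigma^\lambda \mathcal{Q}^\vee \otimes (\det\mathcal{Q})^{\otimes(n-k)} \cong \Sigma^{\lambda^c}\mathcal{Q},
\]
where $\lambda^c$ is defined by $(\lambda^c)_i = (n-k) - \lambda_{k+1-i}$, one checks that the duality acts on the indexing set by the involution $\lambda \mapsto \lambda^c$ with no additional shift. Generalized additivity then expresses $\mathbb{G}W^r(\text{Gr}^{k,n}_{\mathbb{F}}, (\det\mathcal{Q})^{\otimes(n-k)})$ as a direct sum over the orbits of this involution: each fixed point contributes a copy of $\mathbb{G}W^r(\mathbb{F})$, and each $2$-cycle contributes a copy of $\mathbb{K}(\mathbb{F})$.

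The parity dichotomy is then purely combinatorial. A fixed point $\lambda = \lambda^c$ requires $\lambda_i + \lambda_{k+1-i} = n-k$ for every $i$. When $k(n-k)$ is odd (equivalently, $k$ and $n-k$ are both odd), the middle relation $2\lambda_{(k+1)/2} = n-k$ has no integer solution, so $S(k,n-k) = 0$, every partition pairs with its distinct complement, and the decomposition collapses to $R(k,n-k)/2$ copies of $\mathbb{K}(\mathbb{F})$. When $k(n-k)$ is even, there are $S(k,n-k)$ fixed points and $A(k,n-k)/2$ asymmetric pairs, yielding the stated decomposition.

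The main obstacle is the verification underlying the second step: one must show both that the duality with the specified twist really preserves Kapranov's collection setwise, so that generalized additivity applies, and that the resulting shift is exactly $r$ independently of $\lambda$, so that each fixed-point contribution is honestly $\mathbb{G}W^r(\mathbb{F})$ rather than $\mathbb{G}W^{r+s}(\mathbb{F})$ for some $\lambda$-dependent $s$. This is precisely what forces the choice of twisting line bundle in the theorem statement, and it requires careful tracking of Serre duality on the Grassmannian against the Schur-functor complement identity recalled above.
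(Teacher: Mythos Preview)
Your proposal is correct and follows essentially the same route as the paper: Kapranov's exceptional collection, the identity $\mathcal{H}om(S^{\lambda}\mathcal{Q},(\det\mathcal{Q})^{\otimes(n-k)})\cong S^{\lambda^{c}}\mathcal{Q}$, and then the generalized additivity lemma applied with $\mathcal{B}$ generated by the symmetric partitions and $\mathcal{A}$ by half of the asymmetric ones. The only caveat is that your ``main obstacle'' paragraph overstates the difficulty: no Serre duality is needed, since the complement identity you already wrote down shows directly that the duality sends $S^{\lambda}\mathcal{Q}$ to $S^{\lambda^{c}}\mathcal{Q}$ with zero shift, so each fixed point contributes exactly $\mathbb{G}W^{r}(\mathbb{F})$.
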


In \Cref{sec:prelim} we will define the notion of symmetric and asymmetric partitions. We will compute $R(k,l) = \binom{k+l}{k}$, $S(k,l) = \binom{k'+l'}{k'}$, where $k'$ and $l'$ are the integer parts of $k/2$ and $l/2$, respectively, and $A(k,l) = R(k,l) - S(k,l)$.

The computation is then completed by proving the following splitting result.
\begin{theorem}[See Theorem \ref{thm:split} below]
Let $i: \Gr_{S}^{k-1,n-1} \rightarrow \Gr_{S}^{k,n}$, $j: \Gr_{S}^{k,n-1} \rightarrow \Gr_{S}^{k,n}$ be the canonical regular embeddings of Grassmannians. Then, in the above notation, we have the following split fibration of $\mathbb{G}W$-spectra for all $k\leq n, r \in \mathbb{Z}$:
\[
\mathbb{G}W^{r-(n-k)}(\Gr^{k-1,n-1}_{S},(\det\mathcal{Q})^{\otimes (n-k)}) \xrightarrow{i_*}\mathbb{G}W^{r}(\Gr^{k,n}_{S},(\det\mathcal{Q})^{\otimes (n-k-1)}) \xrightarrow{j^*} \mathbb{G}W^{r}(\Gr^{k,n-1}_{S},(\det\mathcal{Q})^{\otimes (n-k-1)}).
\]
\end{theorem}
The work needed to prove this theorem is actually sufficient to establish a similar spilt fibration sequence in $\mathbb{K}$-theory. Whilst this result is known for Grassmannians over fields of characteristic zero, at our level of generality, this appears to be new.  
\begin{corollary}[See \Cref{cor:Ksplit}]
In the above notation, we have the following split fibration of $\mathbb{K}$-theory Spectra for all $k\leq n, r \in \mathbb{Z}$:
\[
\mathbb{K}(\Gr^{k-1,n-1}_{S}) \xrightarrow{i_{*}}\mathbb{K}(\Gr^{k,n}_{S})\xrightarrow{j^{*}} \mathbb{K}(\Gr^{k,n-1}_{S}).
\]
\end{corollary}

Moreover, considering the fundamental fibre sequence involving $\mathbb{L}$-theory (see \cite[Theorem 8.13]{schlichting17} or \cite{calmes2025hermitian}) 
\begin{align} \label{ltheorysequence}
\mathbb{K}^{r}_{hC_{2}} \rightarrow \mathbb{G}W^{r} \rightarrow \mathbb{L}^{r},
\end{align}
we obtain a computation for the $\mathbb{L}$-theory of Grassmannians:
\begin{corollary}[see \Cref{cor:Ltheory} below]
    In the above notation, we have 
    \[
\mathbb{L}^{r}(\Gr^{k,n}_{S},(\det\mathcal{Q})^{\otimes (n-k)}) \cong 
\begin{cases}
0 & k(n-k) \,\, \text{odd,} \\
\mathbb{L}^{r - 2}(S)^{\oplus S(k,n-k)} & (n-k) \,\,  \text{odd and } k\equiv 2 (\text{mod }4),\\
\mathbb{L}^{r}(S)^{\oplus S(k,n-k)}  &  \text{otherwise,} \\
\end{cases}
    \]
 
 and split fibrations of stabilized $\mathbb{L}$-theory spectra
\[
\mathbb{L}^{r-(n-k)}(\Gr^{k-1,n-1}_{S},(\det\mathcal{Q})^{\otimes (n-k)}) \xrightarrow{i_*}\mathbb{L}^{r}(\Gr^{k,n}_{S},(\det\mathcal{Q})^{\otimes (n-k-1)}) \xrightarrow{j^*} \mathbb{L}^{r}(\Gr^{k,n-1}_{S},(\det\mathcal{Q})^{\otimes (n-k-1)}).
\]
\end{corollary}

\begin{remark}
    In Appendix \ref{App:ExplicitComparison}, using all theorems above, we give closed formulae for $\GW$-, $\mathbb{L}$-theory of Grassmannians. We also give explicit comparison of our results with known results in \cite{balmer2012witt,rohrbach2021atiyah,rohrbach2022projective,huang2023connecting,karoubi2021grothendieck}.
\end{remark}

\begin{remark}
Before our work, a partial computation of Hermitian K-theory had been obtained in \cite{rohrbach2021atiyah}. They compute the Hermitian K-theory of the so-called \textit{even} Grassmannians using a different semi-orthogonal decomposition, but the odd-dimensional Grassmannians were not taken into account. Computations for projective space/projective bundle have been given by \cite{karoubi2021grothendieck} and \cite{rohrbach2022projective}.

Another computation of Hermitian K-theory of Grassmannian has been given in \cite[Theorem 10.39]{huang2023connecting}. They use a blow-up approach analogous to \cite{balmer2012witt}, while our approach relies on the exceptional collection constructed by Kapranov \cite{kapranov1988derived}. Our results hold for divisorial schemes defined over a field of characteristic zero, whereas the methods in \cite{huang2023connecting} allow the base to be any regular scheme. Their computation is described in terms of \textit{even} and \textit{buffalo-check} Young diagrams, whereas our computations are described in terms of \textit{asymmetric} and \textit{symmetric} partitions. 

The Witt groups of Grassmannians were computed in \cite{balmer2012witt}, and preceded Hermitian K-theory computations of Grassmannians. The case for $S = \mathbb{C}$ was computed separately by \cite{zibrowius2011witt}. The computation in \cite{balmer2012witt} relies on their companion articles \cite{balmer2009geometric} and \cite{balmer2012bases}. On the other hand, we compute the stabilized $\mathbb{L}$-theory spectrum of Grassmannians as a \textit{corollary} of our Hermitian K-theory computation, using the fundamental fibre sequence (\ref{ltheorysequence}). 
\end{remark}

\paragraph{\textit{Structure of the paper}}
 In Section \ref{sec:prelim} we recall some notions about Young diagrams and Kapranov's full exceptional collection on Grassmannians, then we prove two technical lemmas which will be used in Section \ref{sec:computation}. In Section \ref{sec:computation} we compute the $\mathbb{G}W$-spectrum of Grassmannians, and obtain $\mathbb{L}$-theory as a corollary. In Section \ref{sec:mutation}, we compute several mutations using Borel--Weil--Bott theorem, which are used in Section \ref{sec:computation}. In Appendix \ref{appendix:marco}, written by Marco Schlichting, the push-forward of regular embeddings in Hermitian K-theory is developed. In Appendix \ref{App:ExplicitComparison} we give closed formulae for $\GW$-, $\mathbb{L}$-theory of Grassmannians, and explicitly compare our computation with known results.

\section*{Acknowledgements}
The authors would like to express their gratitude to Marco Schlichting, Daniel Marlowe, Martin Gallauer, Chunyi Li, Zhiyu Liu, Xiangdong Wu and Weiyi Zhang for insightful discussions. Special thanks are due to Heng Xie for sharing an instructive note. We particularly thank Anton Fonarev for identifying an error in the initial draft of this manuscript and for his generous email responses. The first author would particularly like to thank Marco Schlichting for all his help over the years - this paper would not exist without him. The first author also thanks the online math community for helping him with his basic questions surrounding this problem. The second author would particularly like to thank Chunyi Li for his kind and patient guidance.  The second author is supported by the Warwick Mathematics Institute Centre for Doctoral Training, and gratefully acknowledges funding from the University of Warwick.

\section{Preliminaries} \label{sec:prelim}

In this section, we collect some definitions, known results and technical lemmas.

\subsection{Some combinatorial notions}
Denote by $Y(k,l)$ the set of all Young diagrams of height at most $k$ and width at most $l$, which can be identified with the set of sequences of integers:
\begin{equation*}
    Y(k,l) = \{\lambda\in \mathbb{Z}^k| l\geq \lambda_1\geq \lambda_2\geq\cdots\geq\lambda_k\geq 0 \}.
\end{equation*}

We will write $|\lambda| \coloneqq \sum_{i=1}^{k}\lambda_{i}$ for the \textit{modulus} of the Young diagram $\lambda$ and $R(k,l) \coloneqq \# Y(k,l)$. Note that for $k\geq 2$, we have formula $R(k,l) = \sum_{i=0}^{l} R(k-1,i)$, giving us a recursion relation $R(k,l) = R(k,l-1) + R(k-1,l)$. Setting $R(k,0) = R(0,l) =1$, we compute that $R(k,l) = \binom{k+l}{k}$. In particular, note that $R(k,l) = R(l,k)$. 

In addition, for $\lambda \in Y(k,l)$, let $\lambda^{c}$ denote the Young diagram given by the \textit{complement} of $\lambda$ in the rectangle $R(k,l)$. For example, if $\lambda = (\lambda_{1},\dots, \lambda_{k})$, then $\lambda^{c} = (l-\lambda_{k},\dots, l- \lambda_{1})$. For a visual description, see Figure \ref{figure:youngdiagram}, which shows two Young diagrams that are complement to each other within a given rectangle. Note that $(\lambda^{c})^{c} = \lambda$.
\begin{figure}[h]
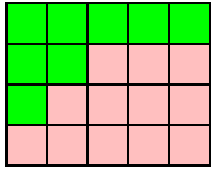

  \begin{ytableau} 
   *(green) & *(green)  &*(green) & *(green)  &*(green)  \\
   *(green)  & *(green) & *(pink) & *(pink) & *(pink) \\
   *(green) & *(pink) & *(pink) & *(pink) & *(pink) \\
   *(pink) & *(pink) & *(pink) & *(pink) & *(pink)
  \end{ytableau}
  \caption{Illustrating two Young diagrams which are complement to each other within a given rectangle.}\label{figure:youngdiagram}
\end{figure}

Following \cite{rohrbach2021atiyah}, we call a Young diagram $\lambda$ a \textit{symmetric partition} if $\lambda^{c} = \lambda$. Note that $|\lambda^{c}| = kl - |\lambda|$, so that symmetric partitions do not exist when $kl$ is odd.  

We denote the number of symmetric partitions in $Y(k,l)$ by $S(k,l)$. In \cite{rohrbach2021atiyah} it was argued that the number $S(k,l)$ is equivalent to the number of \textit{palindrome} sequences with $k$ zeros and $l$ ones. 

Indeed, a Young diagram may be traced out by a path starting at the bottom left of the given rectangle and finishing at the top right of the rectangle, moving $l$ steps to the right and $k$ steps upwards. The right steps correspond to the digit 1 and the upward steps correspond to the digit 0. For example, the green Young diagram above would have sequence $(0,1,0,1,0,1,1,1,0)$. The symmetric Young diagrams then precisely correspond to the \textit{palindrome sequences}. In particular, defining the \textit{dual} of a binary sequence to be the binary sequence obtained by swapping each bit (e.g. 010 becomes 101), we deduce that $S(k,l) = S(l,k)$, as a binary sequence is palindrome if and only if its dual is palindrome.
Using this description, we compute that $S(k,l) = \binom{k'+l'}{k'}$, where $k'$ and $l'$ are the integer parts of $k/2$ and $l/2$ respectively. Furthermore, this gives us the identity $S(k,l) = S(k,l-1)$ when $l$ is odd and the identity $S(k-1,l) + S(k,l-1) = S(k,l)$ when $k$ and $l$ are both even. 

A Young diagram which is not a symmetric partition will be called an \textit{asymmetric partition}. We denote the number of asymmetric partitions in $Y(k,l)$ by $A(k,l)$. Note that $R(k,l) = S(k,l) + A(k,l)$.

\subsection{Derived category of Grassmannians}
When $S$ is a divisorial scheme defined over a field $\mathbb{F}$ of characteristic $0$, denote by $\Gr^{k,n}_{S}$ the Grassmannian over $S$, where $k \leq n$. Note that, by construction, we have a pullback diagram 
\begin{center}
\begin{tikzcd}
\Gr^{k,n}_{S} \arrow[r, "\phi"] \arrow[d, "p"] 
& \Gr^{k,n}_{\mathbb{F}} \arrow[d, "p"] \\
S \arrow[r, "\phi"] & \text{Spec}(\mathbb{F}) 
\end{tikzcd}.
\end{center} 
On $\Gr^{k,n}_{S}$, we have the dual tautological exact sequence of vector bundles 
\begin{align} \label{s.e.s:universal}
1 \rightarrow \mathcal{R}_{k,n} \rightarrow \mathcal{O}^{\oplus n}_{\Gr^{k,n}_{S}} \rightarrow \mathcal{Q}_{k,n} \rightarrow 1,
\end{align}
where $\mathcal{R} = \mathcal{R}_{k,n}$ is of rank $n-k$ and $\mathcal{Q} = \mathcal{Q}_{k,n}$ is of rank $k$. These are sometimes called the dual bundle and tautological bundle respectively. We adopt this notion because it is convenient for computation. When clear from context, we will omit the decorations put on $\mathcal{R}$ and $\mathcal{Q}$. Given a rank $r$ vector bundle $\mathcal{F}$, we will write $\det \mathcal{F} \coloneqq \bigwedge^{r}\mathcal{F}$ for its top exterior power.

For a Young diagram $\lambda\in Y(k,n-k)$, we denote by $S^\lambda$ the Schur functor corresponding to $\lambda$. For a vector bundle $\mathcal{F}$ on $\Gr^{k,n}_{S}$, we follow the convention that $S^{(m)}\mathcal{F} = \text{Sym}^{m} \mathcal{F}, S^{(1)^m} \mathcal{F}=  \Lambda^m\mathcal{F}$.

\begin{proposition}[\cite{kapranov1988derived}] \label{prop:kapranovexceptional}
    The triangulated category $ D^{b}\text{Vect}(\Gr^{k,n}_{\mathbb{F}})$ admits a full exceptional collection indexed by the totally ordered set $(Y(k,n-k),\prec)$:
\[
 D^{b}\text{Vect}(\Gr^{k,n}_{\mathbb{F}}) = \left\langle S^{\lambda}\mathcal{Q}|\lambda \in Y(k,n-k)\right\rangle.
\]
The total order $\prec$ on $Y(k,n-k)$ has the property that whenever we have a strict inequality $|\alpha| < |\beta|$, then $\alpha \prec \beta$.
\end{proposition}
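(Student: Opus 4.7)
The plan is to verify the three defining properties of a full exceptional collection: each $S^\lambda \mathcal{Q}$ is exceptional, the collection is semi-orthogonal for any total order $\prec$ refining the rank function $\lambda \mapsto |\lambda|$, and the objects generate $D_{\text{perf}}(\text{Gr}^{k,n}_{\mathbb{F}})$.

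For the first two properties, I would directly compute
\[
\mathrm{RHom}(S^\beta \mathcal{Q}, S^\alpha \mathcal{Q}) = R\Gamma(\text{Gr}^{k,n}_{\mathbb{F}}, S^\beta \mathcal{Q}^\vee \otimes S^\alpha \mathcal{Q})
\]
for $\alpha, \beta \in Y(k,n-k)$, aiming to show it vanishes whenever $\alpha \neq \beta$ with $|\alpha| \leq |\beta|$, and equals $\mathbb{F}$ in degree $0$ when $\alpha = \beta$. Using the dualization identity $S^\beta \mathcal{Q}^\vee \cong (\det \mathcal{Q})^{\otimes(-\beta_1)} \otimes S^{\bar\beta}\mathcal{Q}$ (with $\bar\beta_i = \beta_1 - \beta_{k+1-i}$) together with the Littlewood-Richardson rule applied to $S^{\bar\beta}\mathcal{Q} \otimes S^\alpha \mathcal{Q}$, the problem reduces to evaluating $R\Gamma$ of homogeneous bundles of the form $(\det \mathcal{Q})^{\otimes(-\beta_1)} \otimes S^\nu \mathcal{Q}$. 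Each such cohomology is determined by the Borel-Weil-Bott theorem on the Grassmannian, which returns $\mathbb{F}$ in a single degree on regular weights after the $\rho$-shift and $0$ otherwise. Tracing which Littlewood-Richardson terms survive then gives both the required vanishing and the identification $\mathrm{End}(S^\alpha \mathcal{Q}) = \mathbb{F}$.

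For generation, I would construct a Koszul resolution of the diagonal. The tautological composition $\mathcal{R}_1 \hookrightarrow \mathcal{O}^{\oplus n} \twoheadrightarrow \mathcal{Q}_2$ on $\text{Gr}^{k,n}_{\mathbb{F}} \times \text{Gr}^{k,n}_{\mathbb{F}}$ defines a section of $\mathcal{R}_1^\vee \boxtimes \mathcal{Q}_2$ whose zero locus is exactly the diagonal, so the associated Koszul complex resolves $\Delta_* \mathcal{O}_{\text{Gr}}$. Cauchy's decomposition
\[
\Lambda^m(\mathcal{R}^\vee \boxtimes \mathcal{Q}) = \bigoplus_{|\lambda|=m} S^{\lambda'} \mathcal{R}^\vee \boxtimes S^\lambda \mathcal{Q}
\]
then rewrites the resolution as a sum of boxes $F_\lambda \boxtimes S^\lambda \mathcal{Q}$ with $\lambda \in Y(k,n-k)$. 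The Fourier-Mukai identity $E \simeq Rp_{2*}(p_1^* E \otimes \Delta_* \mathcal{O})$ exhibits any $E \in D_{\text{perf}}(\text{Gr})$ as an iterated extension of the $S^\lambda \mathcal{Q}$ tensored with the $\mathbb{F}$-vector spaces $R\Gamma(\text{Gr}, E \otimes F_\lambda)$, completing generation.

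The main obstacle is the bookkeeping for semi-orthogonality: matching the Littlewood-Richardson expansion of $S^{\bar\beta}\mathcal{Q} \otimes S^\alpha \mathcal{Q}$ against the regularity criterion of Borel-Weil-Bott, and checking that the $\det$-twist from dualization combined with the constraint $\nu_1 \leq n-k$ forces precisely the rank $|\lambda|$ to control the vanishing. The Koszul construction is otherwise formal once Cauchy's formula is in hand, and the use of characteristic zero to split the symmetric-group decompositions is consistent with the paper's standing hypothesis.
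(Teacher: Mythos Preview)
The paper does not prove this proposition; it is stated as a citation to Kapranov \cite{kapranov1988derived} and used as a black box throughout. So there is no ``paper's own proof'' to compare against.

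That said, your sketch is essentially Kapranov's original argument: Borel--Weil--Bott for the Ext-vanishing and the Koszul resolution of the diagonal (via the tautological section of $\mathcal{R}^\vee \boxtimes \mathcal{Q}$) together with the Cauchy decomposition for generation. This is the correct and standard route. Your formulation of the semi-orthogonality target---vanishing of $\mathrm{RHom}(S^\beta\mathcal{Q}, S^\alpha\mathcal{Q})$ for $\alpha\neq\beta$ with $|\alpha|\le|\beta|$---is slightly stronger than what the ordering $\prec$ strictly demands, but it is in fact true (objects of equal weight are mutually orthogonal) and is what Kapranov proves. The one place where your outline is genuinely incomplete is exactly where you flag it: the combinatorial verification that every Littlewood--Richardson summand $S^\nu\mathcal{Q}\otimes(\det\mathcal{Q})^{-\beta_1}$ lands on a singular weight under Borel--Weil--Bott requires a careful case analysis (or, more cleanly, a direct appeal to Bott's theorem for the weight $(\alpha,0^{n-k}) - (\beta,0^{n-k})$ on the full flag variety), and you have not carried this out.
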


\begin{remark} \label{remark:SODbasechange}
    By \cite[Proposition 5.1]{kuznetsov_base_2011}, we have an induced semi-orthogonal decomposition on $D^{b}\text{Vect}(\Gr^{k,n}_{S})$ given by $ D^{b}\text{Vect}(\Gr^{k,n}_{S}) = \left\langle S^{\lambda}\mathcal{Q} \otimes p^{*}D^{b}\text{Vect}(S)|\lambda \in Y(k,n-k)\right\rangle$. We use this semi-orthogonal decomposition to compute the Hermitian K-theory of Grassmannians. 
\end{remark}


\begin{remark}
    When two Young diagrams $\alpha$ and $\beta$ don't have an inclusion relation, $S^{\alpha}\mathcal{Q}$ and $S^{\beta}\mathcal{Q}$ are orthogonal to each other. And any linear order on $Y(k,n-k)$ refining the partial order given by inclusion can define a full exceptional collection in the above proposition.
\end{remark}

In practice, the following rule is quite important in computation.
\begin{proposition}[Littlewood--Richardson Rule]\label{prop:LRrule}
For two Young diagrams $\lambda,\mu$, we have
\begin{equation*}
    S^\lambda(\mathcal{Q}) \otimes S^{\mu}(\mathcal{Q}) = \bigoplus\limits_{\nu}  c^\nu_{\lambda,\mu} S^{\nu}Q,
\end{equation*}
where $c^{\nu}_{\lambda,\mu}$ counts Littlewood--Richardson tableaux of skew shape $\nu/\lambda$ with content $\mu$.  
\end{proposition}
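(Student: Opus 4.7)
The plan is to reduce the proposition to the classical Littlewood-Richardson rule for polynomial representations of $GL_n$ and transport the resulting identity along the naturality of Schur functors. First I would recall that $S^\lambda$ is a polynomial functor of degree $|\lambda|$ defined on the category of locally free sheaves of finite rank: for any such sheaf $\mathcal{F}$, the sheaf $S^\lambda \mathcal{F}$ is constructed by applying the classical Schur module construction fibrewise in a $GL$-equivariant way, and the construction is natural in $\mathcal{F}$. Consequently, any natural isomorphism of polynomial functors $S^\lambda(-) \otimes S^\mu(-) \cong \bigoplus_\nu c^\nu_{\lambda,\mu}\, S^\nu(-)$ established on finite-dimensional $\mathbb{F}$-vector spaces passes immediately to the corresponding isomorphism of vector bundles on $\text{Gr}^{k,n}_{\mathbb{F}}$ upon evaluation at $\mathcal{Q}$.

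Next I would prove the identity at the level of $GL_n$-representations. Since $\text{char}(\mathbb{F}) = 0$, finite-dimensional polynomial representations of $GL_n$ are semisimple and determined up to isomorphism by their characters (Weyl's theorem). The character of $S^\lambda V$ is the Schur polynomial $s_\lambda(x_1,\dots,x_n)$, so the desired decomposition reduces to the symmetric function identity
\[
s_\lambda \cdot s_\mu \;=\; \sum_\nu c^\nu_{\lambda,\mu}\, s_\nu,
\]
where $c^\nu_{\lambda,\mu}$ counts Littlewood-Richardson tableaux of skew shape $\nu/\lambda$ and content $\mu$. The two sides then have equal characters, and hence represent isomorphic representations; unwinding the reduction of the first paragraph gives the desired decomposition of Schur functors of $\mathcal{Q}$.

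The main obstacle, and the combinatorial core of the argument, is this symmetric function identity, which is independent of the geometric setting. I would approach it through the Robinson-Schensted-Knuth correspondence together with Schützenberger's jeu de taquin: these produce a weight-preserving bijection between pairs of semistandard tableaux of shapes $\lambda$ and $\mu$ on one side and, on the other, pairs consisting of a semistandard tableau of some shape $\nu$ together with an LR skew tableau of shape $\nu/\lambda$ and content $\mu$. An alternative inductive route uses the Pieri rules for multiplication by $h_m$ or $e_m$ combined with the Jacobi-Trudi determinantal formula for $s_\lambda$; I would keep this in reserve as a cross-check. Once the symmetric function identity is in hand, the naturality reduction outlined above completes the proof.
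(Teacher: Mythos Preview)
Your outline is a correct and standard route to the Littlewood-Richardson rule: reduce to characters of polynomial $GL_n$-representations in characteristic zero, then establish the Schur function identity $s_\lambda s_\mu = \sum_\nu c^\nu_{\lambda,\mu} s_\nu$ combinatorially (via RSK and jeu de taquin, or alternatively via Pieri plus Jacobi--Trudi), and finally transport the result to vector bundles by naturality of Schur functors. Nothing in this sketch is wrong, though of course each of the combinatorial steps is substantial if written out in full.

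However, you should be aware that the paper does not actually prove this proposition. It appears in the preliminaries section as a named classical result, stated without proof and used freely thereafter; the paper's own contributions begin later with the Grothendieck--Witt computations and the mutation theorem. So there is no ``paper's proof'' to compare against: the authors simply take the Littlewood-Richardson rule as input. Your proposal is therefore not so much an alternative as a (correct) sketch of background material that the paper assumes. If your goal was to supply the missing argument, what you have written is an appropriate high-level plan; if your goal was to reconstruct the authors' reasoning, then the honest answer is that they defer to the literature here.
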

We refer the reader to Section \ref{appendix:combinatorics} for a reminder of the relevant combinatorial definitions.
\begin{remark}[\cite{knutson2001honeycombs}]
    Given $\mu,\lambda,\nu\in Y(k,n-k)$, for $c^{\nu}_{\lambda,\mu}$ to be non-zero, $\nu$ should satisfy $\nu_{i+j - 1} \leq \lambda_{i} + \mu_{j}$ for all $1\leq i,j, i + j - 1\leq k$.
\end{remark}

\begin{remark}\label{PieriIdentity}
    One special case of Littlewood--Richardson rule is the \textit{Pieri identity}:
\begin{align} \label{iso:schurtensor}
S^{\lambda}(\mathcal{Q}) \otimes \Lambda^{m}{\mathcal{Q}} \cong \bigoplus_{\lambda(m)} S^{\lambda(m)}(\mathcal{Q}),
\end{align}
where the sum is over all Young diagrams $\lambda(m)$ obtained from $\lambda$ by adding $m$ boxes, with no two in the same row. In particular, for $\lambda = (\lambda_{1},\dots,\lambda_{k})$ we have
\[
S^{\lambda}(\mathcal{Q}) \otimes \det{\mathcal{Q}} \cong  S^{(\lambda_{1}+1,\dots,\lambda_{k}+1)}(\mathcal{Q}).
\]
This allows us to define
\[
S^{\lambda}(\mathcal{Q}) \coloneqq S^{\mu}(\mathcal{Q}) \otimes (\det\mathcal{Q})^{\otimes c}
\]
\textit{for any} $\lambda = \lambda_{1} \geq \cdots \geq \lambda_{k}$, where $\mu = \mu_{1}\geq \dots \geq \mu_{k} \geq 0$ and $c \in \mathbb{Z}$ are such that $\lambda_{i} = \mu_{i} + c$ for all $i$.
With this notion, one is able to show $\mathcal{H}om(S^{\lambda}(\mathcal{Q}),\mathcal{O}) \cong S^{(-\lambda_{k},\dots,-\lambda_{1})}(\mathcal{Q})$. For more details, we refer the reader to \cite[Chapter 9]{bruns2022determinants} and \cite[\S 6.1]{fulton2013representation}.
\end{remark}





\subsection{A canonical pairing}\label{appendix:pairing}
In this subsection, we prove a criterion characterizing the (skew-)symmetry of the canonical pairing attached to a symmetric Young diagram $\lambda$, which will be used in the proof of \Cref{thm:compute}. For the readers' convenience, there is no harm to skip these details at first.

Given a Young diagram $\lambda = (\lambda_1,\cdots,\lambda_k)\in Y(k, n-k)$, we denote its transpose by $\lambda^t = (\lambda^t_1,\cdots,\lambda^t_{n-k})$.

Furthermore, for a Young diagram $\lambda = (\lambda_1,\cdots,\lambda_k)$, we use the following notation:
\begin{equation*}
    \Box^\lambda \mathcal{Q} \coloneqq \Box^{\lambda_1} \mathcal{Q} \otimes \Box^{\lambda_2}\mathcal{Q} \otimes \cdots \otimes \Box^{\lambda_k} \mathcal{Q},
\end{equation*}
where the box $\Box$ can be taken as wedge product $\wedge$, tensor power $T$, symmetric power $\text{Sym}$.

To characterize the (skew-)symmetry of the canonical pairing, we need to use the following definition of Schur functor given by \cite{AKIN1982207}:
\begin{definition}[{\cite[Definition II.1.3]{AKIN1982207}}]
    The \textit{Schur functor }$S^\lambda\mathcal{Q}$ of $\mathcal{Q}$ with respect to $\lambda$ with modulus $d\coloneqq |\lambda|$ is defined to be the image of the following composition
    \begin{equation*}
        d_{\lambda}(\mathcal{Q}):\wedge^{\lambda^t} \mathcal{Q}\rightarrow \mathcal{Q}^{\otimes d} \xrightarrow{s_{\lambda^{t}}} \mathcal{Q}^{\otimes d} \rightarrow \text{Sym}^\lambda \mathcal{Q}. 
    \end{equation*}
    Here, the first map is the inclusion given by
    \begin{equation*}
        \wedge^i \mathcal{Q} \rightarrow T^{i}\mathcal{Q}:  x_1\wedge\cdots\wedge x_i \mapsto \sum\limits_{\sigma\in\Sigma_i} \text{Sgn}(\sigma) \cdot x_{\sigma(1)} \otimes \cdots \otimes x_{\sigma(i)}, 
    \end{equation*}
    the second map is defined via $s_{\lambda^{t}}(v_{1}\otimes \cdots \otimes v_{d}) = v_{\sigma_{\lambda^{t}}(1)} \otimes \cdots\otimes v_{\sigma_{\lambda^{t}}(d)}$, where $\sigma_{\lambda^{t}}(\lambda_{1}^{t}+\cdots + \lambda_{i-1}^{t} + j) \coloneqq \lambda_{1}+\cdots + \lambda_{j-1} + i$ for all $1\leq j \leq \lambda_{i}^{t}$; and the third map is the canonical quotient map. 
\end{definition}
\begin{remark}
    The permutation $\sigma_{\lambda} \in S_{d}$ may be thought of as follows: any integer $r \in \{1,\dots,d \}$ can be uniquely written as the sum $r = \lambda_{1} + \cdots + \lambda_{i-1} + j $ for $1\leq j \leq \lambda_{i}$. Pictorially, the pair $(i,j)$ describes the position ($i$th row and $j$th column) of $r$ in Young diagram $\lambda$. The permutation $\sigma_{\lambda}$ then maps $r$ to the integer $r^{t}\in \{1,\dots,d \}$ corresponding to the $(j,i)$ position in the Young diagram $\lambda^{t}$.
\end{remark}

\begin{lemma}
Given a Young diagram $\lambda\in Y(k,n-k)$, there is a non-degenerate pairing:
\begin{equation*}
    S^\lambda \mathcal{Q} \otimes S^{\lambda^c} \mathcal{Q} \rightarrow \det(\mathcal{Q})^{\otimes (n - k)},
\end{equation*}
which is induced from the following pairing by $d_{\lambda}(\mathcal{Q})$:
\begin{equation*}
    \wedge^{\lambda^t}\mathcal{Q} \otimes \wedge^{(\lambda^c)^t}\mathcal{Q}\xrightarrow[]{\wedge} T^{n-k}\det(\mathcal{Q}) = \det(\mathcal{Q})^{\otimes (n - k)}.
\end{equation*}
\end{lemma}
\begin{proof}
    This is a restatement of \cite[Proposition II.4.2]{AKIN1982207}, and note that in characteristic $0$, Weyl functor is isomorphic to Schur functor.
\end{proof}

\begin{corollary}
    Given a symmetric Young diagram $\lambda$, there is a canonical non-degenerate pairing:
    \begin{equation*}
        S^{\lambda} \mathcal{Q} \otimes S^{\lambda} \mathcal{Q} \rightarrow \det(\mathcal{Q})^{\otimes (n - k)}
    \end{equation*}
    which is induced from the following pairing by $d_{\lambda}(\mathcal{Q})$:
\begin{equation*}
    \wedge^{\lambda^t}\mathcal{Q} \otimes \wedge^{\lambda^t}\mathcal{Q}\xrightarrow[]{\wedge} \det(\mathcal{Q})^{\otimes (n - k)}.
\end{equation*}
\end{corollary}

\begin{definition}
    A symmetric Young diagram $\lambda\in Y(k,n-k)$ is called \textit{even} if $\sum\limits_{i = 1}^{ n- k} \lambda^t_{i}\lambda_{n - k + 1 - i}^t$ is even, and it is called \textit{odd} if $\sum\limits_{i = 1}^{ n- k} \lambda^t_{i}\lambda_{n - k + 1 - i}^t$ is odd.
\end{definition}
\begin{lemma}
    Given a symmetric Young diagram $\lambda\in Y(k,n-k)$, the canonical pairing $S^{\lambda} \mathcal{Q} \otimes S^{\lambda} \mathcal{Q} \rightarrow  \det(\mathcal{Q})^{\otimes (n-k)}$ is symmetric when $\lambda$ is even, and it is skew-symmetric when $\lambda$ is odd.
\end{lemma}

\begin{proof}
    The pairing $\wedge^{\lambda^t}\mathcal{Q} \otimes \wedge^{\lambda^t} \mathcal{Q} \rightarrow \det(\mathcal{Q})^{\otimes (n - k)}$ is symmetric when $\sum\limits_{i = 1}^{ n- k} \lambda^t_{i}\lambda_{n - k + 1 - i}^t$ is even, and skew-symmetric when $\sum\limits_{i = 1}^{ n- k} \lambda^t_{i}\lambda_{n - k + 1 - i}^t$ is odd. Indeed, this follows because:
    \begin{equation*}
        (x_1\wedge\cdots\wedge x_{i}) \wedge (y_1\wedge \cdots \wedge y_j) = (-1)^{ij}(y_1\wedge \cdots \wedge y_j) \wedge (x_1\wedge\cdots\wedge x_{i}).
    \end{equation*}
\end{proof}

\begin{proposition}\label{canonicalpairing}
    Assume $k(n- k)$ is even. Then for any symmetric $\lambda\in Y(k,n-k)$, the canonical pairing is skew-symmetric when $n - k$ is odd and $k\equiv 2 (\text{mod }4)$, and it is symmetric in all other cases.
\end{proposition}

\begin{proof}
If $n - k$ is odd, then $k$ is even, and
\begin{equation*}
    \sum\limits_{i = 1}^{ n- k} \lambda^t_{i}\lambda_{n - k + 1 - i}^t = 2 \left(\sum\limits_{i = 1}^{\frac{n - k - 1}{2}}\lambda^t_{i}\lambda_{n - k + 1 - i}^t\right) + \left(\lambda^t_{\frac{n - k + 1}{2}}\right)^{2} \equiv \lambda^t_{\frac{n - k + 1}{2}} (\text{mod 2}).
\end{equation*}    
Since $\lambda \in Y(k,n-k)$ is symmetric, we have
\begin{equation*}
    \lambda_i + \lambda_{k + 1 - i} = n - k,
\end{equation*}
and for $1\leq i \leq \lfloor\frac{k+ 1}{2}\rfloor$, $\lambda_i \geq \lambda_{k + 1 - i}$, hence
$\lambda_i \geq \lfloor\frac{n - k + 1}{2}\rfloor$. Therefore, $\lambda^t_{\frac{n - k + 1}{2}}$, which is equal to the number of $i$ such that $\lambda_i\geq\frac{n - k + 1}{2}$, is exactly $\lfloor\frac{k+ 1}{2}\rfloor = \frac{k}{2}$. Hence, when $k \equiv 0 (\text{mod }4)$, the canonical map is symmetric, and when $k \equiv 2 (\text{mod } 4)$, the canonical map is skew-symmetric.

If $n - k$ is even, then
\begin{equation*}
    \sum\limits_{i = 1}^{ n- k} \lambda^t_{i}\lambda_{n - k + 1 - i}^t = 2 \sum\limits_{i = 1}^{\frac{n - k}{2}}\lambda^t_{i}\lambda_{n - k + 1 - i}^t,
\end{equation*}
and the canonical map is always symmetric.
\end{proof}

\subsection{Computation of a combinatorial number}\label{appendix:combinatorics}

In this subsection, we compute in \Cref{finalsteplemma} a combinatorial number which will be used in \Cref{ThmMutation}. For the readers' convenience, there is no harm to skip these details at first.


We first recall some definitions from combinatorics.

\begin{definition}
    Given a (skew) tableau $T$, the \textbf{reverse word} of $T$ is given by reading the entries of $T$ from right to left and top to bottom. 
\end{definition}

\begin{definition}
    A word $w = x_1,\cdots, x_r$ is called a \textbf{lattice word} if, when it is read from the beginning to any letter, the sequence $x_1,\cdots,x_k$ contains at least $1$'s as it does $2$'s, at least as many $2$'s as $3$'s, and so on for all positive integers.
\end{definition}

\begin{definition}
    A (skew) tableau $T$ is called \textbf{semistandard} if the entries of $T$ weakly increase along each row and strictly increase down each column.
\end{definition}

\begin{definition}
    A (skew) tableau $T$ is a \textbf{Littlewood--Richardson (skew) tableau} if it is a semi-standard tableau and its reverse word is a lattice word.
\end{definition}

\begin{definition}
    A semi-standard (skew) tableau $T$ is said to have \textbf{content} $\mu = (\mu_1, \dots , \mu_l)$ if its entries consist of $\mu_1$ 1's, $\mu_2$ 2's, and so on  up to $\mu_l$ $l$'s. 
\end{definition}

\begin{remark}
    For Young diagrams $\mu,\lambda,\gamma$, the Littlewood--Richardson coefficient $c^\gamma_{\mu,\lambda}$ counts the number of Littlewood--Richardson tableaux of shape $\gamma/\lambda$ with content $\mu$. It is symmetric with respect to $\lambda$ and $\mu$, though this is not obvious.
\end{remark}

\begin{lemma}\label[lemma]{finalsteplemma}
    For any $\lambda\in Y(k -1, n - k)$, we have
    \begin{equation*}
        c^{(n - k)^{(k - 1)}}_{\lambda,\overline{-(n - k,\lambda)}} = 1.
    \end{equation*}
\end{lemma}
\begin{proof}
    Firstly, define $\lambda^\circ$ to be the skew Young diagram obtained from $\lambda$ by rotating $180$ degrees. Note that the skew diagram $(n - k)^{k - 1}/ \overline{-(n - k,\lambda)}$ is just $\lambda^\circ$ after some possible shift to the left. It suffices to prove that there is only one Littlewood--Richardson skew tableau of shape $\lambda^\circ$ with content $\lambda$. Indeed, since $\lambda^\circ$ has exactly $\lambda_1$-columns, and there can be at most one $1$ in each column due to semi-standardness, the filling of $1$'s is fixed, namely in each column's first box. Now there are only $\lambda_2$ columns with empty boxes. Using the same argument, we know that we can only put $2$ in the first empty box of each column left. Repeat this argument for $3,\cdots,k - 1$, we see that there is only one way to fill in these boxes such that the tableau is semi-standard. The statement that the reverse word of this tableau is a lattice word is clear from the way we fill it in. 
\end{proof}

\begin{figure}[h]
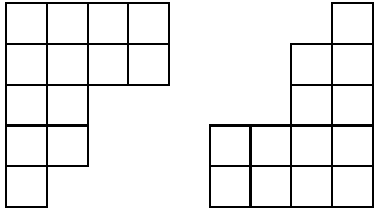
 
\ytableausetup{notabloids}
\begin{ytableau}
\none &  &  &  &     & \none & \none & \none & \none &  \\
\none & & &       &       & \none & \none & \none &  &   \\
\none &  & &  \none    &   \none    & \none&\none  & \none &  & \\
\none &  &  &    \none   & \none& \none &  &  & &\\
\none &   &    \none   &   \none    & \none & \none & & & & 
\end{ytableau}
\caption{The diagram on the left is $\lambda$, the one on the right is $\lambda^\circ$}
\end{figure}
\section{$\mathbb{G}W$-spectrum of Grassmannians}\label{sec:computation}
In this section, we compute the $\mathbb{G}W$-spectrum of Grassmannians. We will be using the framework for Hermitian K-theory as developed in \cite{schlichting17}.

The following lemma, a generalisation of the additivity theorem, will be crucial in computing the $\mathbb{G}W$-spectrum of Grassmanninas. It was proven in the appendix of  \cite{karoubi2021grothendieck}. 

In the following, if $(\mathcal{A}, w, \vee)$ is a dg category with weak equivalences and duality, we write $\mathcal{T}\mathcal{A}\coloneqq w^{-1}\mathcal{A}$, the triangulated category with duality obtained from $\mathcal{A}$ by formally inverting the weak equivalences. Also, the associated hyperboilic category is $\mathcal{HA}\coloneqq \mathcal{A} \times \mathcal{A}^{op}$ and $\mathbb{G}W(\mathcal{H}\mathcal{A},w\times w^{op})  \cong \mathbb{K}(\mathcal{A},w)$. (See \cite{schlichting17} for more details).

\begin{lemma}[Generalised additivity theorem {\cite[Lemma A.2]{karoubi2021grothendieck}}] \label{lemma:gadditivity}
Let $(\mathcal{U},w,\vee)$ be a pretriangulated dg category with weak equivalence and duality such that $\frac{1}{2} \in \mathcal{U}$. Let $\mathcal{A}$ and $\mathcal{B}$ be full pretriangulated dg subcategories of $\mathcal{U}$ containing the $w$-acyclic objects of $\mathcal{U}$. Assume that 
\begin{itemize}
    \item $\mathcal{B}^{\vee} = \mathcal{B}$;
    \item $\mathcal{T}\mathcal{U}(X,Y) = 0$ for all $(X,Y)$ in $\mathcal{A}^{\vee}\times \mathcal{B}, \mathcal{B}\times \mathcal{A}$ or $\mathcal{A}^{\vee} \times \mathcal{A}$; and 
    \item $\mathcal{T}\mathcal{U}$ is generated as a triangulated category by $\mathcal{T}\mathcal{A}$, $\mathcal{T}\mathcal{B}$, $\mathcal{T}\mathcal{A}^{\vee}$.
\end{itemize}
Then, the exact dg form functor 
\[
\mathcal{B}\times \mathcal{H}\mathcal{A} \rightarrow \mathcal{U}: X,(Y,Z) \mapsto X \oplus Y \oplus Z^{\vee}
\]
induces a stable equivalence of Karoubi-Grothendieck-Witt spectra:
\[
\mathbb{G}W(\mathcal{B},w) \times \mathbb{K}(\mathcal{A},w) = \mathbb{G}W(\mathcal{B},w) \times \mathbb{G}W(\mathcal{H}\mathcal{A},w\times w^{op}) \xrightarrow{\sim} \mathbb{G}W(\mathcal{U},w).
\]
\end{lemma}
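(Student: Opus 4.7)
The plan is to apply Schlichting's localization theorem for Karoubi--Grothendieck--Witt spectra to the duality-preserving inclusion $\mathcal{B}\hookrightarrow\mathcal{U}$, and then identify the resulting quotient with the hyperbolic category $\mathcal{H}\mathcal{A}$ at the level of $\mathbb{G}W$. First I would repackage the three $\mathrm{Hom}$-vanishing hypotheses together with the generation hypothesis as a semi-orthogonal decomposition
\[
\mathcal{T}\mathcal{U}=\langle\,\mathcal{T}\mathcal{A},\ \mathcal{T}\mathcal{B},\ \mathcal{T}\mathcal{A}^\vee\,\rangle.
\]
The three vanishing conditions are exactly the semi-orthogonality relations between these pieces, and the generation hypothesis upgrades ``mutually semi-orthogonal generators'' to ``canonical filtration of every object''. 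Applying the duality $\vee$ and using $\mathcal{B}^\vee=\mathcal{B}$ shows the decomposition is self-dual, with $\vee$ swapping the outer factors and fixing $\mathcal{T}\mathcal{B}$.

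Since $\mathcal{B}^\vee=\mathcal{B}$, the inclusion $\mathcal{B}\hookrightarrow\mathcal{U}$ is an exact form functor, and Schlichting's localization theorem (valid because $\tfrac12\in\mathcal{U}$) yields a homotopy fiber sequence
\[
\mathbb{G}W(\mathcal{B},w)\longrightarrow \mathbb{G}W(\mathcal{U},w)\longrightarrow \mathbb{G}W(\mathcal{U}/\mathcal{B},w),
\]
in which $\mathcal{U}/\mathcal{B}$ inherits a duality. The semi-orthogonal decomposition descends to $\mathcal{T}(\mathcal{U}/\mathcal{B})=\langle\mathcal{T}\mathcal{A},\mathcal{T}\mathcal{A}^\vee\rangle$ with the duality exchanging the two components. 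I would then show that the form functor $\mathcal{H}\mathcal{A}\to\mathcal{U}/\mathcal{B}$, $(Y,Z)\mapsto Y\oplus Z^\vee$, induces an equivalence $\mathbb{G}W(\mathcal{H}\mathcal{A})\xrightarrow{\sim}\mathbb{G}W(\mathcal{U}/\mathcal{B})$; combined with the standard identification $\mathbb{G}W(\mathcal{H}\mathcal{A})\simeq\mathbb{K}(\mathcal{A})$ via Karoubi's hyperbolic functor, this rewrites the last term as $\mathbb{K}(\mathcal{A},w)$. The original dg form functor $\mathcal{B}\times\mathcal{H}\mathcal{A}\to\mathcal{U}$ then provides an explicit section: it restricts on $\mathcal{B}$ to the inclusion and on $\mathcal{H}\mathcal{A}$ to a lift of the equivalence with the quotient, so the fiber sequence splits into the claimed product.

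The main obstacle is the $\mathbb{G}W$-equivalence $\mathbb{G}W(\mathcal{H}\mathcal{A})\xrightarrow{\sim}\mathbb{G}W(\mathcal{U}/\mathcal{B})$. The quotient is generally \emph{not} equivalent to $\mathcal{H}\mathcal{A}$ as a dg category with duality --- non-trivial morphisms from $\mathcal{T}\mathcal{A}$ to $\mathcal{T}\mathcal{A}^\vee$ can survive the Verdier quotient and disturb the naive hyperbolic structure --- so one cannot invoke a direct categorical equivalence. Instead, one must argue at the level of forms: using $\tfrac12\in\mathcal{U}$, an orthogonal-complement/Gram--Schmidt argument shows that any non-degenerate symmetric form on an object filtered by $X_1\in\mathcal{T}\mathcal{A}$ and $X_2\in\mathcal{T}\mathcal{A}^\vee$ becomes, after stabilization by metabolic forms, isomorphic to the standard hyperbolic form $Y\oplus Y^\vee$ for some $Y\in\mathcal{A}$. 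Making this sufficiently coherent at the spectrum level --- via the $\mathcal{H}$-construction as a homotopy colimit, or via Schlichting's cofinality results for categories with duality --- is where the bulk of the technical work lies.
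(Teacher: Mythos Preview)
The paper does not give its own proof of this lemma; it simply cites \cite[Lemma~A.2]{karoubi2021grothendieck}. Your outline---reinterpreting the hypotheses as a self-dual semi-orthogonal decomposition $\langle\mathcal{T}\mathcal{A},\mathcal{T}\mathcal{B},\mathcal{T}\mathcal{A}^\vee\rangle$, applying the localization fibre sequence to the duality-closed inclusion $\mathcal{B}\hookrightarrow\mathcal{U}$, identifying the quotient term, and splitting via the explicit form functor---is the correct strategy and matches the argument in the cited reference.

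Where you go slightly astray is in your assessment of the ``main obstacle''. You are right that $\mathcal{U}/\mathcal{B}$ is not literally $\mathcal{H}\mathcal{A}$ as a dg category with duality, but you do not need a bespoke Gram--Schmidt argument at the spectrum level. The vanishing hypotheses guarantee that the images of $\mathcal{A}$ and $\mathcal{A}^\vee$ in $\mathcal{U}/\mathcal{B}$ are still fully faithful and still satisfy $\mathrm{Hom}(\mathcal{A}^\vee,\mathcal{A})=0$, so the quotient carries a two-term semi-orthogonal decomposition $\langle\mathcal{T}\mathcal{A},\mathcal{T}\mathcal{A}^\vee\rangle$ with the induced duality exchanging the pieces. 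This is exactly the input to the \emph{ungeneralised} additivity theorem for $\mathbb{G}W$, which already packages the metabolic/orthogonal-complement argument you sketch and yields $\mathbb{G}W(\mathcal{U}/\mathcal{B})\simeq\mathbb{K}(\mathcal{A})$ directly. The generalised statement is thus the two-piece additivity theorem plus one invocation of localization, and the splitting is supplied by the given form functor; there is no further technical work to do beyond checking that the composite $\mathbb{G}W(\mathcal{H}\mathcal{A})\to\mathbb{G}W(\mathcal{U})\to\mathbb{G}W(\mathcal{U}/\mathcal{B})$ is the equivalence just described.
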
 


This generalised additivity theorem allows us to make the following computation:
\begin{theorem}\label{thm:compute}
   In the above notation, we have 
   \[
   \mathbb{G}W^{r}(\Gr^{k,n}_{S},(\det\mathcal{Q})^{\otimes (n-k)}) \cong 
\begin{cases}
\mathbb{K}(S)^{\oplus \frac{1}{2}R(k,n-k)} & k(n-k) \,\, \text{odd,} \\
\mathbb{G}W^{r - 2}(S)^{\oplus S(k,n-k)} \oplus \mathbb{K}(S)^{\oplus \frac{1}{2}A(k,n-k)} & (n-k) \,\,  \text{odd and } k\equiv 2 (\text{mod }4),\\
\mathbb{G}W^{r}(S)^{\oplus S(k,n-k)} \oplus \mathbb{K}(S)^{\oplus \frac{1}{2}A(k,n-k)} &  \text{otherwise.} \\
\end{cases}
   \]
\end{theorem}
\begin{proof}

By Remark \ref{PieriIdentity}, we have
\begin{align*}
    \mathcal{H}om\left( S^{\lambda}(\mathcal{Q}), (\det\mathcal{Q})^{\otimes (n - k))}\right) &\cong \mathcal{H}om(S^{\lambda}(\mathcal{Q}),\mathcal{O})\otimes (\det\mathcal{Q})^{\otimes (n - k)} \\
    &\cong S^{(-\lambda_{k},\dots,-\lambda_{1})}(\mathcal{Q})\otimes (\det\mathcal{Q})^{\otimes (n - k)} \\
   &\cong S^{(n-k-\lambda_{k},\dots,n-k-\lambda_{1})}(\mathcal{Q}) \\
    & =  S^{\lambda^{c}}(\mathcal{Q}).
\end{align*}
Thus, under this duality, a Young diagram is mapped to its complement within the rectangle.

Therefore, defining $\mathcal{U}$ as the full dg-subcategory of $\text{sPerf}(\Gr^{k,n}_S)$ on the strictly perfect complexes lying in the full triangulated subcategory of $D^{b}\text{Vect}(\Gr^{k,n}_{S})$ generated by the semi-orthogonal decomposition in Remark \ref{remark:SODbasechange}, we deduce $\mathcal{U}$ is closed under this duality, as for any $E\in D^{b}\text{Vect}(S)$,
\[
\mathcal{H}om\left( S^{\lambda}\mathcal{Q} \otimes p^{*}E, (\det\mathcal{Q})^{\otimes (n-k)} \right) \cong S^{\lambda^{c}}\mathcal{Q} \otimes \mathcal{H}om(p^{*}(E),\mathcal{O}) \in S^{\lambda^{c}}\mathcal{Q} \otimes p^* D^{b}\text{Vect}(S).
\]


We define $\mathcal{A}$ as the full dg-subcategory of $\mathcal{U}$ on the strictly perfect complexes lying in the full triangulated subcategory of $D^{b}\text{Vect}(\Gr^{k,n}_{S})$ generated by the first half of the asymmetric partitions. We define $\mathcal{B}$ as the full dg-subcategory of $\mathcal{U}$ on the strictly perfect complexes lying in the full triangulated subcategory of $D^{b}\text{Vect}(\Gr^{k,n}_{S})$ generated by the symmetric partitions.  Let $w$ be the set of quasi-isomorphisms in $\text{sPerf($\Gr^{k,n}_{S}$)}$.

Then, by Lemma \ref{lemma:gadditivity}, we compute 
\[
\mathbb{G}W^{r}(\Gr^{k,n}_{S},(\det\mathcal{Q})^{\otimes (n-k)}) \cong \mathbb{K}(\mathcal{A})\oplus \mathbb{G}W^{r}(\mathcal{B}),
\]
the duality on $\mathcal{B}$ being the restriction of the original duality.

We want to use additivity of $\mathbb{K}$-theory to show that $\mathbb{K}(\mathcal{A}) \cong \mathbb{K}(S)^{\oplus \frac{1}{2}A(k,n-k)}$. For $\lambda \neq \lambda^{c}$, define $\mathcal{A}_{\lambda}$ as the full dg-subcategory of $\mathcal{A}$ generated on objects in $S^{\lambda}(\mathcal{Q})\otimes p^{*}D^{b}\text{Vect}(S)$. Then, we want to show that this equivalence is induced by dg-functors
\begin{align*}
   \text{sPerf}(S)  &\rightarrow  \mathcal{A}_{\lambda} \\
    \mathcal{F} \mapsto & S^{\lambda}(\mathcal{Q}) \otimes p^{*}(\mathcal{F}).
\end{align*}
To see this, it suffices to prove that the unit of adjunction $\text{Id} \Rightarrow Rp_{*}((S^{\lambda}\mathcal{Q})^{\vee}\otimes -)\circ (S^{\lambda}\mathcal{Q}\otimes Lp^{*}(-))$ is a natural isomorphism on derived categories. For this, it suffices to prove that $\mathcal{O}_{\mathbb{F}} \rightarrow Rp_{*}\left( \mathcal{RH}om(S^{\lambda}(\mathcal{Q}), S^{\lambda}(\mathcal{Q})) \right)$ is a quasi-isomorphism. This follows from the fact that $H^{i}\left(\text{Gr}^{k,n}_{\mathbb{F}}, (S^{\lambda}\mathcal{Q})^{\vee} \otimes S^{\lambda}\mathcal{Q} \right) = 0$ for all $i > 0$, and $R\text{Hom}(S^{\lambda}\mathcal{Q},S^{\lambda}\mathcal{Q} ) = \mathbb{F}$. (For example, the former statement is proven in Lemma \ref{lemma:0end} using Theorem \ref{thm:BWB}.)

For $\lambda = \lambda^{c}$, define $\mathcal{B}_{\lambda}$ as the full dg-subcategory of $\mathcal{B}$ generated on objects in $S^{\lambda}(\mathcal{Q})\otimes p^{*}D^{b}\text{Vect}(S)$. Then, by additivity for $\mathbb{G}W^{r}$, we have 
\[
\mathbb{G}W^{r}(\mathcal{B}) \cong \bigoplus_{\lambda = \lambda^{c}}\mathbb{G}W^{r}\left(\left(  \mathcal{B}_{\lambda}, w,\vee, can \right) \right),
\]
where the dualities are again given by restriction. 

Note that the dg form functor (cf. \cite[Remark 1.32]{schlichting17} and \cite[Proposition 7.1.6]{rohrbach2021atiyah})
\begin{align*}
    \left(\text{sPerf}(S), quis, \#, \epsilon can \right) &\rightarrow \left(  \mathcal{B}_{\lambda}, w,\vee, can \right) \\
    \mathcal{F} \mapsto & S^{\lambda}(\mathcal{Q}) \otimes p^{*}(\mathcal{F})
\end{align*}
defines an equivalence on the corresponding $\mathbb{G}W$-spectra, whenever the form functor is \textit{well-defined}. Here, $quis$ denotes the set of quasi-isomorphisms, $\# = \mathcal{H}om(-,\mathcal{O})$ is the usual duality on $\text{sPerf}(S)$ and $\epsilon = \pm 1$.  

When the canonical pairing $S^{\lambda}(\mathcal{Q}) \otimes S^{\lambda}(\mathcal{Q}) \rightarrow \det(\mathcal{Q})^{\otimes (n-k)}$ is \textit{symmetric}, the form-functor is well-defined for $\epsilon = 1$. When the canonical pairing is \textit{skew-symmetric}, the form-functor is well-defined for $\epsilon = -1$. Combined with Proposition \ref{canonicalpairing}, this gives the statement of the theorem.
\end{proof}

Next, note that $H^0(\Gr^{k,n}_{\mathbb{F}},\mathcal{Q}) = (\mathbb{F}^n)^\vee$, hence a non-zero linear form $\psi:\mathbb{F}^n\rightarrow \mathbb{F}$ naturally gives a global section of $\mathcal{Q}$, and the zero locus of this section is the variety of subspaces on which $\psi$ restricts to zero. So we have:
\begin{equation*}
    j:\Gr^{k,n-1}_{\mathbb{F}} \cong \Gr(k,\ker(\psi)) \hookrightarrow \Gr^{k,n}_{\mathbb{F}}.
\end{equation*}
Moreover, $j$ satisfies $j^*\mathcal{Q}_{k,n} = \mathcal{Q}_{k,n-1}$.

In the same way, $H^0(\Gr^{k,n}_{\mathbb{F}},\mathcal{R}^{\vee}) = \mathbb{F}^n$, hence a non-zero vector $u$ in $\mathbb{F}^n$ gives a global section of $\mathcal{R}$, and the zero locus of this section is the variety of subspaces containing $u$. So we have:
\begin{equation*}
    i:\Gr^{k-1,n-1}_{\mathbb{F}} \cong \Gr(k-1,\mathbb{F}^n/\mathbb{F}u) \hookrightarrow \Gr^{k,n}_{\mathbb{F}}
\end{equation*}
Moreover, $i$ satisfies $i^*\mathcal{Q}_{k,n} = \mathcal{Q}_{k-1,n-1} \oplus \mathcal{O}_{\Gr^{k-1,n-1}_{\mathbb{F}}}$ and $i^*\mathcal{R}_{k,n} = \mathcal{R}_{k-1,n-1}$.

In the following, we fix $\psi$ and $u$ such that $\psi(u)\neq 0$, hence $i(\Gr^{k-1,n-1}_{\mathbb{F}})\cap j(\Gr^{k,n-1}_{\mathbb{F}}) = \emptyset$. These properties are preserved by base change, so that we have maps $i:\Gr_{S}^{k-1,n-1} \rightarrow \Gr_{S}^{k,n}$ and $j: \Gr_{S}^{k,n-1} \rightarrow \Gr_{S}^{k,n}$ such that $i(\Gr^{k-1,n-1}_{S})\cap j(\Gr^{k,n-1}_{S}) = \emptyset$ and the pullback formulae analogous to the above hold.

The maps $i:\Gr_{S}^{k-1,n-1} \rightarrow \Gr_{S}^{k,n}$ and $j: \Gr_{S}^{k,n-1} \rightarrow \Gr_{S}^{k,n}$ will be used to describe the split fibration in Theorem \ref{thm:split}. This split fibration will be a particular instance of the following elementary lemma of triangulated categories:
\begin{lemma} \label{lemma:reverse}
In a triangulated category, suppose we have a commutative diagram 
\[
\begin{tikzcd} 
&Z \arrow[d, "i"] \arrow[dr,"1"] \\
X \arrow[dr,"1",swap] \arrow [r,"f"] & Y \arrow[d, "j"] \arrow[r, "g"] & Z \\
& X
\end{tikzcd},
\]
where the horizontal row is a distinguished triangle and $ji=0$. Then, the sequence 
\[
Z \xrightarrow{i} Y \xrightarrow{j} X
\]
is a distinguished triangle isomorphic to the distinguished triangle $Z \rightarrow X\oplus Z \rightarrow X$.
\end{lemma}

\begin{proof}
We have maps 
\[
\begin{pmatrix}
j \\
g
\end{pmatrix}: Y \rightarrow X\oplus Z, \quad 
\begin{pmatrix}
f & i
\end{pmatrix}: X\oplus Z \rightarrow Y
\]
whose composition 
\[
\begin{pmatrix}
j \\
g
\end{pmatrix}
\begin{pmatrix}
f & i
\end{pmatrix} =
\begin{pmatrix}
jf & ji \\
gf & gi
\end{pmatrix} = 
\begin{pmatrix}
1 & 0 \\
0 & 1
\end{pmatrix}
\]
is the identity. 

Moreover, we have a commutative diagram of distinguished triangles
\[
\begin{tikzcd} 
X \arrow[d,"=" ] \arrow[r, "f"]& Y \arrow[d, "\begin{pmatrix}
j \\ g \end{pmatrix}"] \arrow[r,"g"] & Z \arrow[d, "="] \\
X \arrow[r]  & X\oplus Z \arrow[r] & Z
\end{tikzcd},
\]
so that the middle arrow is an isomorphism. Therefore, $\begin{pmatrix}
f & i\end{pmatrix}: X\oplus Z \rightarrow Y$ is an isomorphism fitting into the commutative diagram 
\[
\begin{tikzcd}
Z \arrow[d,"=" ] \arrow[r]& X\oplus Z \arrow[d, "
(f \,\, i) "] \arrow[r] & X \arrow[d, "="] \\
Z \arrow[r,"i"]  & Y \arrow[r, "j"]  & X 
\end{tikzcd},
\]
so that the bottom row is a distinguished triangle. 
\end{proof}

\begin{theorem}\label{thm:split}
Let $S$ be a divisorial scheme defined over a field of characteristic zero. Then, we have the following split fibration of $\mathbb{G}W$-spectra for all $k\leq n, r \in \mathbb{Z}$:
\begin{align}    \label{sequence:split}
\mathbb{G}W^{r-(n-k)}(\Gr^{k-1,n-1}_{S},(\det\mathcal{Q})^{\otimes (n-k)}) \xrightarrow{i_*}\mathbb{G}W^{r}(\Gr^{k,n}_{S},(\det\mathcal{Q})^{\otimes (n-k-1)}) \xrightarrow{j^*} \mathbb{G}W^{r}(\Gr^{k,n-1}_{S},(\det\mathcal{Q})^{\otimes (n-k-1)}).
\end{align}
\end{theorem}
\begin{proof}
 Let $\mathcal{T}$ be the full dg-subcategory of sPerf$(\text{Gr}^{k,n}_{S})$ on the strictly perfect complexes lying in the full triangulated subcategory of $D^{b}\text{Vect}(\Gr^{k,n}_{S})$ generated by $S^{\lambda}\mathcal{Q} \otimes p^{*}D^{b}\text{Vect}(S)$ with $\lambda\in Y(k,n-k-1) \subseteq Y(k,n-k)$. 
Let $T \coloneqq w^{-1}\mathcal{T} \subseteq D^{b}\text{Vect}(\Gr^{k,n}_{S})$ be the corresponding triangulated subcategory.    
 
 By Remark \ref{PieriIdentity}, we have
 \begin{align*}
     \mathcal{H}om\left( S^{\lambda}(\mathcal{Q}), (\det\mathcal{Q})^{\otimes (n - k - 1))}\right) &\cong S^{\lambda}(\mathcal{Q})^{\vee}\otimes (\det\mathcal{Q})^{\otimes (n - k - 1)} \\
     &\cong S^{(-\lambda_k,\cdots,-\lambda_1)}(\mathcal{Q}) \otimes (\det \mathcal{Q})^{\otimes (n - k - 1)} \\
     &\cong S^{(n - k - 1 - \lambda_k,\cdots,n-k-1-\lambda_1)}(\mathcal{Q}), 
 \end{align*}
 so that for any $E$ in $D^{b}\text{Vect}(S)$,
 \[
 \mathcal{H}om\left( S^{\lambda}\mathcal{Q} \otimes p^{*}E, (\det\mathcal{Q})^{\otimes (n-k-1)} \right) \cong S^{(n - k - 1 - \lambda_k,\cdots,n-k-1-\lambda_1)}\mathcal{Q} \otimes \mathcal{H}om(p^{*}E, \mathcal{O}) \in \mathcal{T}.
 \]
 Hence $\mathcal{T}$ is closed under the duality $\vee$ with values in $(\det\mathcal{Q})^{\otimes(n-k-1)}$. 

 Therefore, by Lemma \ref{lemma:reverse}, the theorem will be proven if we can prove the existence of the following commutative diagram of  $\mathbb{G}W$-spectra, with diagonal arrows equivalences and vertical arrows composing to zero:

\begin{equation} \label{diagram:diamond}
\begin{tikzcd} 
&\mathbb{G}W^{r-(n-k)}(\Gr_{S}^{k-1,n-1},(\det\mathcal{Q})^{\otimes(n-k)}) \arrow[d, "i_{*}"] \arrow[dr,"\simeq"] \\
\mathbb{G}W^{r}(\mathcal{T},w,\vee) \arrow[dr,"\simeq",swap] \arrow [r] & \mathbb{G}W^{r}(\Gr^{k,n}_{S},(\det\mathcal{Q})^{\otimes(n-k-1)}) \arrow[d, "j^{*}"] \arrow[r] & \mathbb{G}W^{r}(sPerf(\Gr_{S}^{k,n}),v,\vee) \\
& \mathbb{G}W^{r}(\Gr^{k,n-1}_{S},(\det\mathcal{Q})^{\otimes(n-k-1)}).
\end{tikzcd}
\end{equation}

Here, $v$ is the class of maps in $\text{sPerf}(\Gr^{k,n}_{S})$ which become isomorphisms in $D^{b}\text{Vect}(\Gr^{k,n}_{S})/T \simeq T^{\perp}$. It is important to note that the latter equivalence is induced by the \textit{left mutation} $L_{T}: D^{b}\text{Vect}(\Gr^{k,n}_{S}) \rightarrow T^{\perp}$ associated to the semi-orthogonal decomposition $D^{b}\text{Vect}(\Gr^{k,n}_{S}) = \langle T^{\perp}, T \rangle$.


We need to show that the pushforward induces the map 
\[
 \mathbb{G}W^{r-(n-k)}(\Gr_{S}^{k-1,n-1},(\det\mathcal{Q})^{\otimes(n-k)}) \xrightarrow{i_{*}} \mathbb{G}W^{r}(\Gr_{S}^{k,n},(\det\mathcal{Q})^{\otimes(n-k-1)}).
\]

To show this, it is proven in Appendix \ref{appendix:marco} (written by Marco Schlichting) that for a codimension $c$ regular embedding of divisorial schemes $i : Z \hookrightarrow X$, we have an induced map on $\mathbb{G}W^{r}$-spectra
\[
i_{*}:\mathbb{G}W^{r}(Z,i^{*}\mathcal{L} \otimes \det{\mathcal{N}}[-c]) \rightarrow \mathbb{G}W^{r}(X,\mathcal{L}).
\]
Here, $\mathcal{L}$ is a line bundle on $X$ and $\mathcal{N}$ is the conormal sheaf of the regular embedding $i : Z \hookrightarrow X$. (See also \cite[Corollary 5.1.8]{calmes2024motivic} for the corresponding explanation in the context of Poincar\'{e} $\infty$-categories.)



To compute the determinant of the conormal sheaf in our case, note that $\mathcal{N}^{\vee}$ sits inside a short exact sequence 
\[
0 \rightarrow \mathcal{N}^{\vee} \rightarrow i^{*}\Omega_{\Gr_{S}^{k,n}} \rightarrow \Omega_{\Gr_{S}^{k-1,n-1}}\rightarrow 0,
\]
where $\Omega_{\Gr_{S}^{k,n}}$ is the relative cotangent sheaf of $ \Gr_{S}^{k,n} \rightarrow S$.
Defining $\omega_{\Gr_{S}^{k,n}} \coloneqq \det\Omega_{\Gr_{S}^{k,n}}$ and taking top exterior powers of the above short exact sequence, we have 
\[
i^{*}\omega_{\Gr_{S}^{k,n}} \cong \det \mathcal{N}^{\vee} \otimes \omega_{\Gr_{S}^{k-1,n-1}},
\]
so that 
\[
\det \mathcal{N} \cong \omega_{\Gr_{S}^{k-1,n-1}} \otimes \left(i^{*}\omega_{\Gr_{S}^{k,n}}\right)^{\vee}.
\]
Recall that 
\[
\Omega_{\Gr_{S}^{k,n}} \cong \mathcal{H}om\left(\mathcal{Q},\mathcal{R} \right) \cong \mathcal{Q}^{\vee} \otimes \mathcal{R},
\]
which gives us 
\[
\omega_{\Gr_{S}^{k,n}} \cong \det\left(\mathcal{Q}^{\vee}\otimes\mathcal{R} \right) \cong \left(\det\mathcal{Q}^{\vee}\right)^{\otimes (n-k)}\otimes\left(\det\mathcal{R}\right)^{\otimes k} \cong (\det\mathcal{R})^{\otimes n}.
\]
Therefore, 
\[
    \det\mathcal{N} \cong \left(\det\mathcal{R}\right)^{\otimes (n-1)}\otimes \left(\det\mathcal{R}^{\vee}\right)^{\otimes n}
    \cong \det\mathcal{R}^{\vee}
    \cong \det\mathcal{Q}.
\]

Thus, it follows that the pushforward induces the map 
\[
 \mathbb{G}W^{r-(n-k)}(\Gr_{S}^{k-1,n-1},(\det\mathcal{Q})^{\otimes(n-k)}) \xrightarrow{i_{*}} \mathbb{G}W^{r}(\Gr_{S}^{k,n},(\det\mathcal{Q})^{\otimes(n-k-1)}).
\]
Moreover, as $j^{*}\det\mathcal{Q}^{\otimes (n-k-1)} \cong \det\mathcal{Q}^{\otimes (n-k-1)}$, it is clear that the bottom vertical map is well-defined (see \cite[\S 9.3]{schlichting17}). 

To see that $j^{*}\circ i_{*} = 0$, recall that the intersection $i(\Gr_{S}^{k-1,n-1}) \cap j( \Gr_{S}^{k,n-1})$ is empty, so that the map $j:\Gr_{S}^{k,n-1} \rightarrow \Gr_{S}^{k,n}$ factors through the \textit{open complement} of the map $i:\Gr_{S}^{k-1,n-1} \rightarrow \Gr_{S}^{k,n}$. The composition $j^{*}\circ i_{*}$ is therefore equal to zero by Lemma \ref{lemma:zerocomp}.



Next, we show that the diagonal arrows are equivalences. We begin with the lower diagonal arrow. 
 
Note that $T$ has a semi-orthogonal decomposition given by $\left\{ S^{\lambda}\mathcal{Q} \otimes p^{*}D^{b}\text{Vect}(S)\right\}_{\lambda \in Y(k,n-k-1)}$. Furthermore, note that  
\[
j^{*}\left(S^{\lambda}\mathcal{Q} \otimes p^{*}E \right) \simeq S^{\lambda}\mathcal{Q} \otimes p^{*}E,
\] 
where $E \in D^{b}\text{Vect}(S)$.

Define $\mathcal{A} \subseteq \mathcal{T}$, $\mathcal{A}' \subseteq \text{sPerf}(\Gr^{k,n-1}_{S}), $ to be the full dg subcategories generated by the first half of the asymmetric partitions. Define $\mathcal{B} \subseteq \mathcal{T}$, $\mathcal{B}' \subseteq \text{sPerf}(\Gr^{k,n-1}_{S})$ to be the full dg categories generated by the symmetric  partitions. Define filtrations 
\[
0 = \mathcal{F}_{0} \subset \cdots \subset \mathcal{F}_{\frac{1}{2}A(k,n-k-1)} = \mathcal{A},
\]
and 
\[
0 = \mathcal{G}_{0} \subset \cdots \subset \mathcal{G}_{S(k,n-k-1)} = \mathcal{B},
\]
where $\mathcal{F}_{i}$ is the full dg subcategory generated by the first $i$ asymmetric objects and $\mathcal{G}_{i}$ is the full dg subcategory generated by the first $i$ symmetric objects. Define the filtrations 
\[
0 = \mathcal{F'}_{0} \subset \cdots \subset \mathcal{F'}_{\frac{1}{2}A(k,n-k-1)} = \mathcal{A'},
\]
and 
 
\[
0 = \mathcal{G'}_{0} \subset \cdots \subset \mathcal{G'}_{S(k,n-k-1)} = \mathcal{B'},
\]
analogously. 

As the map $j^{*}$ respects these filtrations and the associated quotients $\mathcal{T}\mathcal{F}_{i+1}/\mathcal{T}\mathcal{F}_{i}, \dots, \mathcal{T}\mathcal{G'}_{i+1}/\mathcal{T}\mathcal{G'}_{i}$ are equivalent to the triangulated category generated by a single piece of the semi-orthogonal decomposition, it follows by induction that $j^{*}$ induces an equivalence between the $K$-theory of $\mathcal{A}$ and $\mathcal{A'}$; and the Hermitian K-theory of $\mathcal{B}$ and $\mathcal{B'}$.  

By Lemma \ref{lemma:gadditivity} therefore, $j^{*}$ defines an isomorphism between $\mathbb{G}W^{r}(\mathcal{T},w,\vee)$ and $\mathbb{G}W^{r}(\Gr^{k,n-1}_{S},(\det\mathcal{Q})^{\otimes(n-k-1)})$, the latter being computed in Theorem \ref{thm:compute}. 

Finally, we argue that the upper diagonal arrow is an equivalence.

Note that $T^{\perp}$ has a semi-orthogonal decomposition given by $\left\{ S^{(\lambda,0)}(\mathcal{Q}) \otimes \det(Q)^{-1} \otimes p^{*}D^{b}\text{Vect}(S)\right\}$, and ${}^{\perp}T$ has a semi-orthogonal decomposition given by $\left\{ S^{(n-k,\lambda)}(\mathcal{Q}) \otimes p^{*}D^{b}\text{Vect}(S)\right\}$ where $\lambda$ are Young diagrams in $ Y(k-1,n-k)$.  

Furthermore, the equivalence $T^{\perp} \xrightarrow{\simeq} D^{b}\text{Vect}(\text{Gr}_{S}^{k,n})/T$ can be used to pull-back the duality structure on $D^{b}\text{Vect}(\text{Gr}_{S}^{k,n})/T$ to equip $T^{\perp}$ with a  duality such that the equivalence becomes a form functor. See  for example \cite[Lemma 4.3]{balmer2002gersten}. Moreover, we may take the duality on $T^{\perp}$ to be $\# \coloneqq L_{T} \circ \vee$. By \Cref{ThmMutation} and \Cref{remark:mutationbasechange}, we have
\[ 
\left[ S^{(\lambda,0)}(\mathcal{Q}) \otimes \det(Q)^{-1} \otimes p^{*}E \right]^{\#} = S^{(\lambda^{c},0)}(\mathcal{Q}) \otimes \det(Q)^{-1} [n-k] \otimes \mathcal{H}om(p^{*} E, \mathcal{O}),
\]
where $E \in D^{b}\text{Vect}(S)$.
So we again have a notion of asymmetric and symmetric partitions under the duality $\#$. 

Therefore, if we can define triangulated subcategories $\mathcal{V}, \mathcal{W}$ of $T^{\perp}$ such that 
\begin{itemize}
    \item $\mathcal{W}^{\#} = \mathcal{W}$,
    \item $T^{\perp}(X,Y) = 0$ for all $(X,Y)$ in $\mathcal{V}^{\#}\times \mathcal{W}, \mathcal{W}\times \mathcal{V}$ or $\mathcal{V}^{\#} \times \mathcal{V}$,
    \item $T^{\perp} = \langle\mathcal{V},\mathcal{W},\mathcal{V}^{\#}  \rangle$,
\end{itemize}
 we may then use the form functor $T^{\perp} \xrightarrow{\simeq} D^{b}\text{Vect}(\text{Gr}_{S}^{k,n})/T$ to define corresponding triangulated subcategories of $D^{b}\text{Vect}(\text{Gr}_{S}^{k,n})/T$ that behave under duality as above. These triangulated subcategories  then determine the dg-categories needed to use Lemma \ref{lemma:gadditivity}. 

We define $\mathcal{V}$ to be the triangulated subcategory generated by the first half of the asymmetric partitions in $T^{\perp}$, and $\mathcal{W}$ to be the triangulated subcategory generated by the symmetric partitions in $T^{\perp}$. By \Cref{cor:miracle} and \Cref{remark:mutationbasechange}, 
\[
L_{T} \circ i_{*}(S^{\lambda}\mathcal{Q} \otimes p^{*}E) \simeq \left(S^{(\lambda,0)}\mathcal{Q} \otimes \det(\mathcal{Q})^{-1}[n-k]\right) \otimes p^{*}E,
\]
where $E\in D^{b}\text{Vect}(S)$. Hence we may use a similar filtration argument as above to conclude the upper diagonal is an equivalence. This concludes the proof.
\end{proof}

Moreover, as $i(\Gr_{S}^{k-1,n-1}) \cap j(\Gr_{S}^{k,n-1}) = \emptyset$, $j^{*}(S^{\lambda}\mathcal{Q}) = S^{\lambda}\mathcal{Q}$ and $L_{T}\circ i_{*}(S^{\lambda}\mathcal{Q}) \cong S^{\lambda}\mathcal{Q} \otimes \det(\mathcal{Q})^{-1}[n-k]$, we may use Lemma \ref{lemma:reverse} to deduce the following result:
\begin{corollary} \label[corollary]{cor:Ksplit}
    In the above notation, we have split fibration of $\mathbb{K}$-theory spectra for all $k\leq n, r \in \mathbb{Z}$:
    \[
    \mathbb{K}(\Gr^{k-1,n-1}_{S}) \xrightarrow{i_{*}}\mathbb{K}(\Gr^{k,n}_{S})\xrightarrow{j^{*}} \mathbb{K}(\Gr^{k,n-1}_{S}).
    \] 
\end{corollary}
\begin{remark}
This result is known for $S = \text{Spec}(\mathbb{F})$. Indeed, we have an affine bundle $\Gr^{k,n}_{\mathbb{F}}\setminus \Gr^{k,n-1}_{\mathbb{F}} \rightarrow \Gr^{k-1,n-1}_{\mathbb{F}}$, whose pullback induces an isomorphism $\mathbb{K}(\Gr^{k-1,n-1}_{\mathbb{F}}) \cong \mathbb{K}(\Gr^{k,n}_{\mathbb{F}}\setminus \Gr^{k,n-1}_{\mathbb{F}} )$, giving us a fibration sequence
\[
\mathbb{K}(\Gr^{k,n-1}_{\mathbb{F}}) \xrightarrow{j_{*}}\mathbb{K}(\Gr^{k,n}_{\mathbb{F}})\xrightarrow{i^{*}} \mathbb{K}(\Gr^{k-1,n-1}_{\mathbb{F}}).
\]
One can then apply Lemma \ref{lemma:reverse} to obtain the desired fibration sequence for $S = \text{Spec}(\mathbb{F})$. However, at our level of generality, Corollary \ref{cor:Ksplit} appears to be new.
\end{remark}

Moreover, the $\mathbb{L}$-theory spectrum (\cite[Definition 8.12]{schlichting17}) of Grassmannians may also be computed using the above reasoning. 

To do this, it will be convenient to spell out the version of the Lemma \ref{lemma:gadditivity} for $\mathbb{L}$-theory. (In the notation of Witt groups, this was proven in \cite[Theorem 3.6]{walter2003}). 
\begin{lemma} \label{lemma:ladditivity}
Let $(\mathcal{U},w,\vee)$ be a pretriangulated dg category with weak equivalence and duality such that $\frac{1}{2} \in \mathcal{U}$. Let $\mathcal{A}$ and $\mathcal{B}$ be full pretriangulated dg subcategories of $\mathcal{U}$ containing the $w$-acyclic objects of $\mathcal{U}$. Assume that 
\begin{itemize}
    \item $\mathcal{B}^{\vee} = \mathcal{B}$;
    \item $\mathcal{T}\mathcal{U}(X,Y) = 0$ for all $(X,Y)$ in $\mathcal{A}^{\vee}\times \mathcal{B}, \mathcal{B}\times \mathcal{A}$ or $\mathcal{A}^{\vee} \times \mathcal{A}$; and 
    \item $\mathcal{T}\mathcal{U}$ is generated as a triangulated category by $\mathcal{T}\mathcal{A}$, $\mathcal{T}\mathcal{B}$, $\mathcal{T}\mathcal{A}^{\vee}$.
\end{itemize}
Then, the inclusion 
\[
\mathcal{B} \hookrightarrow \mathcal{U}
\]
induces a stable equivalence of $\mathbb{L}$-spectra:
\[
\mathbb{L}(\mathcal{B},w) \xrightarrow{\sim} \mathbb{L}(\mathcal{U},w).
\]
\end{lemma}

\begin{proof}

Let $\mathcal{A}' \subset \mathcal{U}$ be the full dg subcategory whose objects lie in the triangulated subcategory of $\mathcal{TU}$ generated by $\mathcal{TA}$ and $\mathcal{TA}^{\vee}$. Let $v$ be the class of maps in $\mathcal{U}$ which are isomorphisms in $\mathcal{TU}/\mathcal{TA'}$.

Then, by localisation, the sequence $(\mathcal{A}',w) \rightarrow (\mathcal{U},w) \rightarrow (\mathcal{U},v)$ induces a fibration sequence $\mathbb{L}(\mathcal{A}',w) \rightarrow \mathbb{L}(\mathcal{U},w) \rightarrow \mathbb{L}(\mathcal{U},v)$. By additivity for $\mathbb{G}W$ \cite[Proposition 8.15]{schlichting17}, the map $\mathbb{K}(\mathcal{A}',w)_{hC_{2}} \rightarrow \mathbb{G}W(\mathcal{A}',w)$ is an equivalence, so that by \cite[Theorem 8.13]{schlichting17}, $\mathbb{L}(\mathcal{A}',w) \simeq 0$. The result follows.  
\end{proof}

\begin{corollary}\label[corollary]{cor:Ltheory}
    In the above notation, we have 
    \[
   \mathbb{L}^{r}(\Gr^{k,n}_{S},(\det\mathcal{Q})^{\otimes (n-k)}) \cong 
\begin{cases}
0 & k(n-k) \,\, \text{odd,} \\
\mathbb{L}^{r - 2}(S)^{\oplus S(k,n-k)} & (n-k) \,\,  \text{odd and } k\equiv 2 (\text{mod }4),\\
\mathbb{L}^{r}(S)^{\oplus S(k,n-k)}  &  \text{otherwise } \\
\end{cases}
 \]
 and split fibrations of stabilized $\mathbb{L}$-theory spectra
 \[
\mathbb{L}^{r-(n-k)}(\Gr^{k-1,n-1}_{S},(\det\mathcal{Q})^{\otimes (n-k)}) \xrightarrow{i_*}\mathbb{L}^{r}(\Gr^{k,n}_{S},(\det\mathcal{Q})^{\otimes (n-k-1)}) \xrightarrow{j^*} \mathbb{L}^{r}(\Gr^{k,n-1}_{S},(\det\mathcal{Q})^{\otimes (n-k-1)}).
\]
\end{corollary}
\begin{proof}
The first part of the corollary follows from Lemma \ref{lemma:ladditivity} and the reasoning used in the proof of Theorem \ref{thm:compute}. The proof of second part of the corollary is the same as the proof of Theorem \ref{thm:split}, replacing $\mathbb{G}W^{r}$ with $\mathbb{L}^{r}$.
\end{proof}

\section{Computation of the mutation}\label{sec:mutation}
In this section we compute two mutations (\Cref{ThmMutation} and \Cref{cor:miracle}) used in the proof of \Cref{thm:split}. The strategy is to firstly prove these statements over field of characteristic zero, then use a base-change argument to obtain the statement for divisorial scheme over a field of characteristic zero (See \Cref{remark:mutationbasechange}).


Let $\alpha = (\alpha_1,...,\alpha_n)$ be integers such that $\alpha_1\geq ...\geq\alpha_{k}$, $\alpha_{k+1}\geq...\geq\alpha_n$. Denote by $\beta = (\alpha_1,...,\alpha_k)$, $\gamma = (\alpha_{k+1},...,\alpha_n)$. Define 
\begin{equation*}
    \mathcal{V}(\alpha) \coloneqq S^{\beta}\mathcal{Q} \otimes S^\gamma \mathcal{R}.
\end{equation*}

Set $\rho = (n-1,n-2,...,0)$. A permutation on $n$ letters $\sigma\in S_n$ naturally acts on the set $\mathbb{Z}^n$:
\begin{equation*}
    \sigma((\alpha_1,...,\alpha_n)) = (\alpha_{\sigma(1)},...,\alpha_{\sigma(n)}).
\end{equation*}

We define the dotted action of $S_n$ on $\mathbb{Z}^n$ in the following way:
\begin{equation*}
    (\alpha)\cdot \sigma = \sigma (\alpha + \rho) -\rho.
\end{equation*}

It is direct to verify that the dotted action is a right group action.
\begin{remark}
    For the transposition $(i,j)$ with $i< j $, we have
    \begin{equation*}
        (\alpha_1,\cdots,\alpha_i,\cdots,\alpha_j,\cdots\alpha_n)\cdot (i j ) = (\alpha_1,\cdots,\alpha_j + i - j,\cdots,\alpha_i +j - i,\cdots, \alpha_n),
    \end{equation*}
    where $\alpha_j + i - j$ is placed in the $i^{th}$ place, $\alpha_{i} + j - i$ is placed in the $j^{th}$ place. In particular, when $j = i + 1$, we have
    \begin{equation*}
        (\alpha_1,\cdots,\alpha_i,\alpha_{i + 1},\cdots,\alpha_n)\cdot(i,i+1) = (\alpha_1,\cdots,\alpha_{i+1} - 1,\alpha_i + 1,\cdots,\alpha_n).
    \end{equation*}
    More generally,
    \begin{equation*}
        (\alpha)\cdot \sigma = (\alpha_{\sigma(1)} + 1 - \sigma(1),\alpha_{\sigma(2)}+ 2 - \sigma(2),\cdots,\alpha_{\sigma(n)} + n - \sigma(n)).
    \end{equation*}
\end{remark}

The following theorem tells us how to compute the cohomology of $\mathcal{V}(\alpha)$.

\begin{theorem}[Borel--Weil--Bott Theorem, {\cite[Corollary 4.1.9]{Weyman_2003}}]\label{thm:BWB}
    Notations as above, and let $\mathbb{F}$ be a field of characteristic zero. Then one of two mutually exclusive possibilities occurs:
    \begin{enumerate}[(i)]
        \item There exists $\sigma\in S_n$, $\sigma\neq \text{id}$, such that $(\alpha)\cdot\sigma = \alpha$. Then $H^i(\Gr^{k,n}_{\mathbb{F}},V(\alpha)) = 0$ for all $i\geq 0$.
        \item There exists a unique $\sigma\in S_n$ such that $(\alpha)\cdot\sigma \coloneqq (\nu)$ is a partition (i.e. is non-increasing). In this case all cohomology $H^i(\Gr^{k,n}_{\mathbb{F}},V(\alpha))$ are zero for $i\neq l(\sigma)$, and 
        \begin{equation*}
            H^{l(\sigma)}(\Gr^{k,n}_{\mathbb{F}},V(\alpha)) = S^\nu ((\mathbb{F}^{n})^\vee).
        \end{equation*}
    \end{enumerate}
Here, $l(\sigma)$ denotes the minimal number of adjacent transpositions required to write $\sigma$ as a product of adjacent transpositions.
    
\end{theorem}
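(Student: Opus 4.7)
The plan is to deduce the theorem from the classical Borel-Weil-Bott theorem on the full flag variety, which is essentially how Weyman derives Corollary 4.1.9 of \cite{Weyman_2003}. Let $G = GL_{n}$, let $P \subset G$ be the parabolic subgroup with $G/P \cong \text{Gr}^{k,n}_{\mathbb{F}}$, let $B \subset P$ be a Borel, and let $p: G/B \to G/P$ denote the canonical projection. The Levi quotient of $P$ is $L = GL_{k} \times GL_{n-k}$, and the fibers of $p$ are isomorphic to a product of two full flag varieties.

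First I would identify $\mathcal{V}(\alpha) = S^{\beta}\mathcal{Q} \otimes S^{\gamma}\mathcal{R}$ with the $G$-equivariant vector bundle on $G/P$ associated to the irreducible $L$-representation of highest weight $\alpha = (\beta,\gamma)$. The assumption that $\beta$ and $\gamma$ are each non-increasing is exactly the condition for $\alpha$ to be dominant with respect to the Levi $L$, so that this representation is well-defined and, in characteristic zero, is the outer tensor product of the Schur functors applied to the defining representations of each factor. Let $\mathcal{L}_{\alpha}$ be the $G$-equivariant line bundle on $G/B$ associated to the character $\alpha$ of the maximal torus. Applying the Borel-Weil theorem and Kempf vanishing fiberwise to $p$ yields $p_{*}\mathcal{L}_{\alpha} \cong \mathcal{V}(\alpha)$ and $R^{i}p_{*}\mathcal{L}_{\alpha} = 0$ for $i>0$, so the Leray spectral sequence degenerates into a natural isomorphism
\begin{equation*}
H^{\bullet}(\text{Gr}^{k,n}_{\mathbb{F}}, \mathcal{V}(\alpha)) \cong H^{\bullet}(G/B, \mathcal{L}_{\alpha}).
\end{equation*}

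Next I would invoke classical Borel-Weil-Bott on $G/B$, which delivers exactly the dichotomy asserted in the theorem. A permutation $\sigma \in S_{n}$ stabilises $\alpha$ under the dotted action if and only if $\sigma(\alpha + \rho) = \alpha + \rho$, i.e.\ if and only if $\alpha + \rho$ has a repeated coordinate; when some non-identity $\sigma$ does so, all cohomology of $\mathcal{L}_{\alpha}$ vanishes, proving (i). Otherwise $\alpha + \rho$ has distinct entries and there is a unique $\sigma \in S_{n}$ rearranging them into a strictly decreasing sequence $\nu + \rho$ (equivalently, $(\alpha)\cdot\sigma = \nu$ is a partition), and classical BWB says $H^{i}(G/B,\mathcal{L}_{\alpha})$ equals $S^{\nu}(\mathbb{F}^{n})$ when $i = l(\sigma)$ and vanishes otherwise. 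Together with the isomorphism above, this proves (ii).

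The main obstacle is the classical Borel-Weil-Bott theorem itself, whose proof I would cite rather than reproduce; standard references include the algebraic inductive proof along $\mathbb{P}^{1}$-fibrations between successive partial flag varieties, or Bott's original analytic argument via harmonic forms. The secondary essential input is the fiberwise Borel-Weil / Kempf vanishing for $p$, which is where the characteristic zero hypothesis enters crucially: in positive characteristic one must distinguish Weyl modules from simple modules for $L$, and the higher direct images $R^{i}p_{*}\mathcal{L}_{\alpha}$ need no longer vanish for dominant $\alpha$, so the clean reduction to the full flag variety breaks down.
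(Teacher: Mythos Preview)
The paper does not give a proof of this statement; it is quoted as Corollary~4.1.9 of \cite{Weyman_2003} and used as a black box. Your proposal is a correct outline of the standard derivation, and is essentially the route Weyman takes: reduce from the partial flag variety to the full flag variety via the projection $p:G/B\to G/P$, using Borel--Weil on the fibers to identify $Rp_{*}\mathcal{L}_{\alpha}$ with $\mathcal{V}(\alpha)$ concentrated in degree zero, and then apply the classical Borel--Weil--Bott theorem on $G/B$. One small remark: Kempf vanishing for $R^{i}p_{*}\mathcal{L}_{\alpha}$ with $\alpha$ dominant for $L$ actually holds in all characteristics; the characteristic-zero hypothesis is needed rather for the identification of $p_{*}\mathcal{L}_{\alpha}$ with the Schur-functor bundle (as opposed to a Weyl-module bundle) and, more essentially, for the classical BWB statement on $G/B$ itself.
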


\begin{remark}
    When using this theorem below, we will suppress the Grassmannian from the notation, as it is clear from context. 
\end{remark}

\begin{lemma}\label{LemmaVecBund}\cite[p84]{huybrechts2006fourier}
    For $\mathcal{E}, \mathcal{F}$ vector bundles on a smooth projective variety $X$, we have
    \begin{equation*}
        \mathbf{R}\text{Hom}(\mathcal{E},\mathcal{F}) = \mathbf{R}\text{Hom}(\mathcal{O},\mathcal{F}\otimes \mathcal{E}^{\vee}).
    \end{equation*}
\end{lemma}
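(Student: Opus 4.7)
The plan is to write both sides as derived global sections of the same complex, namely $\mathcal{F}\otimes \mathcal{E}^{\vee}$, so that the identification becomes tautological once standard compatibilities are invoked.

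First I would use the fact that derived $\text{Hom}$ factors as $\mathbf{R}\text{Hom}(-,-) = \mathbf{R}\Gamma(X,-) \circ \mathbf{R}\mathcal{H}om(-,-)$, which is a consequence of $\text{Hom}(\mathcal{E},\mathcal{F}) = \Gamma(X,\mathcal{H}om(\mathcal{E},\mathcal{F}))$ together with the Grothendieck composition-of-derived-functors principle (one checks that sheaf $\mathcal{H}om(\mathcal{E},-)$ carries injectives to $\Gamma$-acyclics). Apply this on the left to rewrite
\begin{equation*}
\mathbf{R}\text{Hom}(\mathcal{E},\mathcal{F}) \;\simeq\; \mathbf{R}\Gamma\bigl(X,\mathbf{R}\mathcal{H}om(\mathcal{E},\mathcal{F})\bigr).
\end{equation*}

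Next I would invoke local freeness of $\mathcal{E}$: because $\mathcal{E}$ is a vector bundle, the functor $\mathcal{H}om(\mathcal{E},-)$ is exact, so $\mathbf{R}\mathcal{H}om(\mathcal{E},\mathcal{F}) \simeq \mathcal{H}om(\mathcal{E},\mathcal{F})$, and the usual adjunction for locally free sheaves gives $\mathcal{H}om(\mathcal{E},\mathcal{F}) \cong \mathcal{F}\otimes \mathcal{E}^{\vee}$ (no derived tensor needed, again by local freeness). Hence the left hand side equals $\mathbf{R}\Gamma(X,\mathcal{F}\otimes \mathcal{E}^{\vee})$.

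For the right hand side, the same factorization gives $\mathbf{R}\text{Hom}(\mathcal{O},\mathcal{G}) \simeq \mathbf{R}\Gamma(X,\mathbf{R}\mathcal{H}om(\mathcal{O},\mathcal{G})) \simeq \mathbf{R}\Gamma(X,\mathcal{G})$ for any complex $\mathcal{G}$, taking $\mathcal{G}=\mathcal{F}\otimes\mathcal{E}^{\vee}$. Both sides then agree with $\mathbf{R}\Gamma(X,\mathcal{F}\otimes\mathcal{E}^{\vee})$, which is the claim. There is no real obstacle here; the only subtlety worth mentioning is the check that $\mathcal{H}om(\mathcal{E},-)$ sends injective $\mathcal{O}_X$-modules to $\Gamma$-acyclics (so that the composition-of-derived-functors Grothendieck spectral sequence applies), which is standard when $\mathcal{E}$ is locally free.
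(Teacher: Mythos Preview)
Your argument is correct and is exactly the standard proof: factor $\mathbf{R}\mathrm{Hom}$ as $\mathbf{R}\Gamma\circ\mathbf{R}\mathcal{H}om$, then use local freeness of $\mathcal{E}$ to identify $\mathbf{R}\mathcal{H}om(\mathcal{E},\mathcal{F})$ with $\mathcal{F}\otimes\mathcal{E}^{\vee}$. The paper does not supply its own proof of this lemma; it simply cites \cite[p.~84]{huybrechts2006fourier}, where precisely this reasoning appears, so there is nothing further to compare.
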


\begin{lemma}\label{LemmaSerreFunctor}
    Assume that a $\mathbb{F}$-linear triangulated category $\mathcal{A}$ has a Serre functor $S_{\mathcal{A}}$ and admits a full exceptional collection $\{E_1,\cdots,E_n\}$. Let $j$ be a fixed integer with $1\leq j \leq n$ and let $E'$ be an exceptional object in $\mathcal{A}$. If $E'$ satisfies:
    \begin{enumerate}[(i)]
        \item $\mathbf{R}\text{Hom}(E_i, E') = 0$ for all $i < j$, 
        \item $\mathbf{R}\text{Hom}(S_{\mathcal{A}}(E_k),E') = 0$ for all $k>j$,
        \item $\mathbf{R}\text{Hom}(E_j, E') = \mathbb{F}$,
    \end{enumerate}
    then $E' \cong S_{\mathcal{A}}(E_j)$.
\end{lemma}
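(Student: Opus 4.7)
The plan is to interpret the hypotheses as one-sided orthogonality conditions with respect to the Serre-translated exceptional collection $\{S_{\mathcal{A}}(E_i)\}_{i=1}^n$, which is again full exceptional in the same order because $S_{\mathcal{A}}$ is an autoequivalence (it preserves Hom-spaces and the property of generating $\mathcal{A}$). Writing $\mathcal{B}_1 = \langle S_{\mathcal{A}}(E_1), \ldots, S_{\mathcal{A}}(E_{j-1})\rangle$, $\mathcal{B}_2 = \langle S_{\mathcal{A}}(E_j)\rangle$, and $\mathcal{B}_3 = \langle S_{\mathcal{A}}(E_{j+1}), \ldots, S_{\mathcal{A}}(E_n)\rangle$, one obtains a three-piece semi-orthogonal decomposition $\mathcal{A} = \langle \mathcal{B}_1, \mathcal{B}_2, \mathcal{B}_3\rangle$.

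First I would use Serre duality $\mathbf{R}\text{Hom}(X, Y)^{\vee} \cong \mathbf{R}\text{Hom}(Y, S_{\mathcal{A}}(X))$ to rewrite the first hypothesis as $\mathbf{R}\text{Hom}(E', S_{\mathcal{A}}(E_i)) = 0$ for $i < j$, placing $E' \in {}^{\perp}\mathcal{B}_1 = \langle \mathcal{B}_2, \mathcal{B}_3\rangle$. The second hypothesis is already of the required form and gives $E' \in \mathcal{B}_3^{\perp} = \langle \mathcal{B}_1, \mathcal{B}_2\rangle$. Next, I would show that the intersection $\langle \mathcal{B}_2, \mathcal{B}_3\rangle \cap \langle \mathcal{B}_1, \mathcal{B}_2\rangle$ equals $\mathcal{B}_2$: given any $X$ in both, decompose it via the SOD of $\langle \mathcal{B}_1, \mathcal{B}_2\rangle$ as a triangle $X_1 \to X \to X_2$ with $X_i \in \mathcal{B}_i$; then apply $\mathbf{R}\text{Hom}(-, B_1)$ for any $B_1 \in \mathcal{B}_1$. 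The contributions from $X_2$ and $X_2[-1]$ vanish by the semi-orthogonality $\mathbf{R}\text{Hom}(\mathcal{B}_2, \mathcal{B}_1) = 0$, so $\mathbf{R}\text{Hom}(X_1, B_1) \cong \mathbf{R}\text{Hom}(X, B_1) = 0$; specialising to $B_1 = X_1$ kills $\mathrm{id}_{X_1}$, forcing $X_1 = 0$ and hence $X \cong X_2 \in \mathcal{B}_2$.

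Having established $E' \in \mathcal{B}_2 = \langle S_{\mathcal{A}}(E_j)\rangle$, exceptionality of $S_{\mathcal{A}}(E_j)$ gives $E' \cong \bigoplus_i S_{\mathcal{A}}(E_j)[n_i]^{\oplus a_i}$ for some integers $n_i$ and multiplicities $a_i$. A final application of Serre duality yields $\mathbf{R}\text{Hom}(E_j, S_{\mathcal{A}}(E_j)) \cong \mathbf{R}\text{Hom}(E_j, E_j)^{\vee} = \mathbb{F}$ concentrated in degree zero, so $\mathbf{R}\text{Hom}(E_j, E') \cong \bigoplus_i \mathbb{F}[n_i]^{\oplus a_i}$. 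The third hypothesis then forces exactly one summand with $n_i = 0$ and $a_i = 1$, yielding $E' \cong S_{\mathcal{A}}(E_j)$. The delicate step is the intersection argument: the Serre-translated exceptional collection is genuinely distinct from the original, so one must keep careful track of which one-sided orthogonal (${}^{\perp}(\cdot)$ versus $(\cdot)^{\perp}$) is determined by each piece of the three-term SOD, and verify that the two hypotheses land $E'$ in complementary left/right orthogonals that intersect in the single middle piece.
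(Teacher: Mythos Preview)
Your proof is correct and follows essentially the same route as the paper's: pass to the Serre-translated full exceptional collection $\{S_{\mathcal{A}}(E_i)\}$, use Serre duality to convert the first hypothesis into right-orthogonality to $\langle S_{\mathcal{A}}(E_i)\rangle_{i<j}$, combine with the second hypothesis to place $E'$ in the middle piece $\langle S_{\mathcal{A}}(E_j)\rangle$, and use the third hypothesis to pin down the shift and multiplicity. The paper's argument is simply a terse version of yours; you have made explicit the intersection argument and the final Serre-duality computation that the paper leaves to the reader.
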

\begin{proof}
    Note that $\{S_{\mathcal{A}}(E_1),\cdots,S_{\mathcal{A}}(E_n)\}$ is also a full exceptional collection of $\mathcal{A}$, and the condition says that $E'$ is an exceptional object with $\text{Hom}(S_{\mathcal{A}}(E_k),E') = 0$ for all $k > j$ and $\text{Hom}(E',S_{\mathcal{A}}(E_i)) = 0$ for all $i < j$. Hence, the first two bullet points imply that $E'$ must be isomorphic to direct sum of copies of $S_{\mathcal{A}}(E_j)$ up to a homological shift. The final bullet point implies there is only one copy of $S_{\mathcal{A}}(E_j)$ and no homological shift. 
\end{proof}

For any sequence of non-increasing integers $\lambda = (\lambda_1,\cdots,\lambda_k)$, we denote by
\begin{equation*}
    \overline{\lambda}\coloneqq (\lambda_1 - \lambda_k,\lambda_2 - \lambda_k,\cdots,\lambda_{k-1}-\lambda_k,0),
\end{equation*}
the `minimal positive' modification of $\lambda$. We will also write $-\lambda \coloneqq (-\lambda_k, \cdots, -\lambda_1)$. For $r \in \mathbb{Z}, k \in \mathbb{N}$, we will write $(r)^{k} \coloneqq (r,\cdots,r)$, $k$ entries of $r$.

For the next two statements, we specialize to $S = \text{Spec } \mathbb{F}$. Recall that we have defined in \Cref{sec:computation} $T = \{S^{\lambda} \mathcal{Q} | \lambda \in Y(k, n - k - 1) \}$, and thus
\begin{equation*}
    {}^{\perp}T = \left\langle S^{(n - k,\lambda)}\mathcal{Q} | \lambda \in Y(k-1,n-k)   \right\rangle,
\end{equation*}
\begin{equation*}
     T^{\perp} = \left\langle S^{(\lambda,0)}\mathcal{Q}\otimes \det (\mathcal{Q})^{-1} | \lambda \in Y(k-1,n-k)  \right\rangle. 
\end{equation*}

\begin{theorem}\label{ThmMutation}
    $L_T(S^{(n-k,\lambda)}\mathcal{Q}) = S^{(\lambda,0)}\mathcal{Q}\otimes \det(\mathcal{Q})^{-1} [n - k]$.
\end{theorem}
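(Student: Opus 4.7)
The plan is to identify the left mutation $L_T(S^{(n-k,\lambda)}\mathcal{Q})$ using its characterization as the unique object $Y\in T^{\perp}$ fitting into a distinguished triangle $P \to S^{(n-k,\lambda)}\mathcal{Q} \to Y \to P[1]$ with $P\in T$. Setting $Y := S^{(\lambda,0)}\mathcal{Q}\otimes \det(\mathcal{Q})^{-1}[n-k]$, my strategy is to prove $L_T(S^{(n-k,\lambda)}\mathcal{Q}) \cong Y$ in two steps, reducing everything to cohomology computations on $\text{Gr}^{k,n}_{\mathbb{F}}$ that can be handled by Borel--Weil--Bott (Theorem \ref{thm:BWB}).

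The first step is to verify that $Y \in T^{\perp}$, which amounts to showing $\mathbf{R}\text{Hom}(S^{\mu}\mathcal{Q}, Y) = 0$ for every $\mu \in Y(k,n-k-1)$. By Lemma \ref{LemmaVecBund} together with the Schur duality formula $S^{\mu}(\mathcal{Q})^{\vee} \cong S^{-\mu}(\mathcal{Q})$ of Remark \ref{PieriIdentity}, each such $\mathbf{R}\text{Hom}$ reduces to the hypercohomology of an explicit bundle on $\text{Gr}^{k,n}_\mathbb{F}$. After expanding via the Littlewood--Richardson rule (Proposition \ref{prop:LRrule}) and rewriting in terms of both $\mathcal{Q}$ and $\mathcal{R}$ via the universal sequence \ref{s.e.s:universal}, each resulting summand takes the form $\mathcal{V}(\alpha)$ from the start of this section, and I would argue that the associated sequence $\alpha + \rho$ contains a repetition, so that case (i) of Borel--Weil--Bott applies and forces all cohomology to vanish.

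The second step is to apply Lemma \ref{LemmaSerreFunctor} with $\mathcal{A} = T^{\perp}$ equipped with its exceptional collection $\{S^{(\mu,0)}\mathcal{Q}\otimes\det(\mathcal{Q})^{-1}\}_{\mu\in Y(k-1,n-k)}$. The Serre functor on $T^{\perp}$ is obtained from Bondal's formula $S_{T^{\perp}} = L_T \circ S_{\mathcal{D}}|_{T^{\perp}}$, where the ambient Serre functor is $S_{\mathcal{D}}(-) = (-)\otimes \det(\mathcal{Q})^{-n}[k(n-k)]$ (since $\omega_{\mathrm{Gr}^{k,n}} = \det(\mathcal{Q})^{-n}$). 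Verifying the three orthogonality conditions of the lemma for the threshold index $j$ corresponding to $\lambda$ — conditions that, via the mutation triangle for $L_T$ and the ambient exceptional collection, again reduce to Borel--Weil--Bott calculations of $\mathbf{R}\Gamma$ of various $\mathcal{V}(\alpha)$'s — then identifies $L_T(S^{(n-k,\lambda)}\mathcal{Q}) \cong Y$ as claimed.

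The main obstacle is the bookkeeping required in the Borel--Weil--Bott analyses. The shift by $[n-k]$ in the statement arises as the length $l(\sigma)$ of a specific sorting permutation, and identifying this permutation — proving uniformly over $\lambda \in Y(k-1,n-k)$ that precisely one of the relevant $\alpha + \rho$ is regular with $l(\sigma) = n-k$ while all others are singular — requires careful combinatorial analysis of how the partition $(n-k,\lambda)$ interacts with the indices $\mu$ of both exceptional collections. Organizing this case analysis so that each cohomology falls cleanly into case (i) or case (ii) of Theorem \ref{thm:BWB} with the correct length is the technical heart of the argument.
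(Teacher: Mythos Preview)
Your overall strategy --- relate $L_T$ to Serre functors, apply Lemma \ref{LemmaSerreFunctor} inside $T^{\perp}$, and reduce the three conditions to Borel--Weil--Bott plus Littlewood--Richardson --- is exactly the paper's approach. However, the way you have set it up has two genuine problems.

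First, your Step 1 is redundant: the object $S^{(\lambda,0)}\mathcal{Q}\otimes\det(\mathcal{Q})^{-1}$ is one of the generators of the exceptional collection on $T^{\perp}$ that you yourself quote in Step 2, so $Y\in T^{\perp}$ is immediate and no BWB computation is needed there. Second, and more importantly, your Bondal formula is wrong. The correct identity (the one the paper cites from Huybrechts) is
\[
L_T|_{{}^{\perp}T} \;=\; S_{T^{\perp}}^{-1}\circ S_{\mathcal{D}}|_{{}^{\perp}T},
\]
equivalently $S_{T^{\perp}}^{-1} = L_T\circ S_{\mathcal{D}}^{-1}|_{T^{\perp}}$; your version $S_{T^{\perp}} = L_T\circ S_{\mathcal{D}}|_{T^{\perp}}$ fails because $L_T$ is the \emph{left} adjoint to the inclusion of $T^{\perp}$, not the right one. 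This matters because the paper's proof hinges on rewriting the claim as $S_{\mathcal{D}}(S^{(n-k,\lambda)}\mathcal{Q})[k-n] = S_{T^{\perp}}\bigl(S^{(\lambda,0)}\mathcal{Q}\otimes\det(\mathcal{Q})^{-1}\bigr)$ and then applying Lemma \ref{LemmaSerreFunctor} with $E' := S_{\mathcal{D}}(S^{(n-k,\lambda)}\mathcal{Q})$. The whole point is that this $E'$ is \emph{explicit} (the ambient Serre functor is just twist by $\omega$ and shift), so all three $\mathbf{R}\mathrm{Hom}$ conditions become concrete computations in $\mathcal{D}$; the second condition is handled by downward induction on $\lambda$ since it involves $S_{T^{\perp}}$ applied to larger indices, which are known by hypothesis. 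Your proposal never specifies what $E'$ is, and your suggestion to compute $\mathbf{R}\mathrm{Hom}$'s against $L_T(S^{(n-k,\lambda)}\mathcal{Q})$ ``via the mutation triangle'' would require knowing the $T$-component $P$, which is precisely the unknown.

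Finally, the third condition of Lemma \ref{LemmaSerreFunctor} --- that a specific $\mathbf{R}\mathrm{Hom}$ equals $\mathbb{F}$ in degree $n-k$ --- does not fall out of BWB alone: after the BWB step one is left with the Littlewood--Richardson coefficient $c^{(n-k)^{k-1}}_{\lambda,\overline{-(n-k,\lambda)}}$, and the paper proves separately (Lemma \ref{finalsteplemma}) that this equals $1$. You should flag this as its own combinatorial step rather than folding it into the general BWB bookkeeping.
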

\begin{proof}
    By Lemma $7.1.14$ of  \cite{Huybrechts_2023}, we have
    \begin{equation*}
        L_T|_{^{\perp}T} = \mathcal{S}^{-1}_{T^{\perp}}\circ \mathcal{S}_{\mathcal{D}}|_{^{\perp}T},
    \end{equation*}
    where $\mathcal{S}_{T^{\perp}}$ is the Serre functor of $T^{\perp}$, $\mathcal{S}_{\mathcal{D}}$ is the Serre functor of $\mathcal{D} \coloneqq D^{b}\text{Vect}(\Gr^{k,n}_{\mathbb{F}})$. Therefore, it suffices to prove
    \begin{equation*}
        \mathcal{S}_{\mathcal{D}}(S^{(n-k,\lambda)}\mathcal{Q}[k-n]) = \mathcal{S}_{T^{\perp}}(S^{(\lambda,0)}\mathcal{Q}\otimes \det(\mathcal{Q})^{-1}).
    \end{equation*}

(Note, by functoriality of $\mathcal{S}_{\mathcal{D}}$, we have $\mathcal{S}_{\mathcal{D}}(S^{(n-k,\lambda)}\mathcal{Q})\in T^{\perp}$.)
     
   To use Lemma \ref{LemmaSerreFunctor}, we begin by showing that for a fixed $\lambda\in Y(k-1,n-k)$, 
   \[
   \mathbf{R}\text{Hom}(S^{(\mu,0)}\mathcal{Q}\otimes \det(\mathcal{Q})^{-1},\mathcal{S}_{\mathcal{D}}(S^{(n-k,\lambda)}\mathcal{Q})) = 0
   \]
   for all $\mu \prec  \lambda$. Using the functoriality of Serre functor, this is equivalent to showing
    \begin{equation*}
        \mathbf{R}\text{Hom}(S^{(n - k,\lambda)}\mathcal{Q}, S^{(\mu,0)}\mathcal{Q} \otimes \det(\mathcal{Q})^{-1}) = 0
    \end{equation*}
    for all $\mu \prec  \lambda$. We have
    \begin{align*}
        \text{Hom}(S^{(n - k,\lambda)}\mathcal{Q},S^{(\mu,0)}\mathcal{Q}\otimes \det(\mathcal{Q}^{-1})[i]) &= H^i(S^{(\mu,0)}\mathcal{Q}\otimes S^{\overline{-(n - k,\lambda)}}\mathcal{Q} \otimes \det(\mathcal{R})^{\otimes (n - k + 1)}) \\
        &= H^i\left(\bigoplus_{|\gamma| =|\mu| - |\lambda| + (k - 1)(n - k)}c^{\gamma}_{\mu,\overline{-(n-k,\lambda)}}S^{(\gamma_1,\cdots,\gamma_k)}\mathcal{Q}\otimes \det(\mathcal{R})^{\otimes (n - k + 1)}\right),
    \end{align*}
    and $\gamma_{k - 1}$ satisfies $\gamma_{k - 1} \leq n- k + \mu_i - \lambda_i$ for all $i$. Since $\mu \prec  \lambda$, there is at least one $i$ such that $\mu_i < \lambda_i$. Hence $\gamma_{k - 1} \leq n -k - 1$. We claim that for all such $\gamma$ and all $i \geq 0$,
    \begin{equation*}
        H^i(S^{\gamma}\mathcal{Q}\otimes\det(\mathcal{R})^{\otimes (n - k + 1)}) =0.
    \end{equation*}
Indeed, by Bott's algorithm (see \cite[Remark 4.1.5]{Weyman_2003}), it suffices to permute the element $\gamma_{k-1}$ in the sequence $(\gamma,(n - k+ 1)^{(n - k)})$ to the right by a series of adjacent transpositions via the dotted action until we reach a sequence of the form $(\dots,n-k,n-k+1,\dots)$. Concretely, this is achieved by the permutation $\sigma_{\gamma} = (k-1,k)(k,k+1)\cdots(n-\gamma_{k-1}-2,n-\gamma_{k-1}-1)$.
We may then conclude by the Borel--Weil--Bott Theorem.

      Next, for a fixed $\lambda$, we may assume that we have proved inductively 
      \[
       \mathcal{S}_{T^{\perp}}(S^{(\mu,0)}\mathcal{Q}\otimes \det(Q)^{-1}) \cong \mathcal{S}_{\mathcal{D}}(S^{(n-k,\mu)}\mathcal{Q}) \quad \text{for all $\mu \succ \lambda$}
      \]
      up to a homological shift, as the second condition of Lemma \ref{LemmaSerreFunctor} is satisfied trivially for the largest $\mu$.  Hence, we still need to verify that for all $\mu \succ \lambda$,
     \begin{equation*}
        \mathbf{R}\text{Hom}(\mathcal{S}_{\mathcal{D}}(S^{(n-k,\mu)}\mathcal{Q}),\mathcal{S}_{\mathcal{D}}(S^{(n-k,\lambda)}\mathcal{Q})) = 0.
     \end{equation*}
     And this is equivalent to showing for all $\mu \succ \lambda$,
     \begin{equation*}
         \mathbf{R}\text{Hom}(S^{(n-k,\mu)}\mathcal{Q},S^{(n-k,\lambda)}\mathcal{Q}) = 0.
     \end{equation*}         
    By Lemma \ref{LemmaVecBund}, it suffices to show for all $i$,
    \begin{equation*}
        H^i(S^{(n-k,\lambda)}\mathcal{Q}\otimes S^{\overline{-(n-k,\mu)}}\mathcal{Q}\otimes\det(\mathcal{R})^{\otimes (n - k)}) = 0.
    \end{equation*}
    Using Littlewood--Richardson rule, we have
    \begin{equation*}
        S^{(n-k,\lambda)}\mathcal{Q}\otimes S^{\overline{-(n-k,\mu)}}\mathcal{Q}\otimes\det(\mathcal{R})^{\otimes(n - k)} = \bigoplus_{\gamma} c^\gamma_{(n-k,\lambda),\overline{-(n-k,\mu)}} S^\gamma\mathcal{Q}\otimes \det(\mathcal{R})^{\otimes (n - k)},
    \end{equation*}
    and $\gamma_k \leq n - k +\lambda_i - \mu_i$. Since $\mu \succ \lambda$, there is at least one $i$ such that $\mu_i > \lambda_i$. Hence $\gamma_k \leq n - k - 1$. Using the same argument as above, we have
    \begin{equation*}
        H^i(S^\gamma\mathcal{Q}\otimes\det(\mathcal{R})^{\otimes (n - k)}) = 0
    \end{equation*}
    for all such $\gamma$ and all $i\geq 0$. Hence
    \begin{equation*}
        \mathbf{R}\text{Hom}(S^{(n-k,\mu)}\mathcal{Q},S^{(n-k,\lambda)}\mathcal{Q}) = 0.
    \end{equation*}

    Thus, by Lemma \ref{LemmaSerreFunctor}, for all $\lambda\in Y(k - 1,n-k)$, we know that $\mathcal{S}_{\mathcal{D}}(S^{(n-k,\lambda)}\mathcal{Q})$ is isomorphic to $\mathcal{S}_{T^{\perp}}(S^{(\lambda,0)}\mathcal{Q}\otimes \det(Q)^{-1})$ up to a homological shift. To conclude that the shift in degree is $k - n$, it suffices to check that for all $\lambda\in Y(k - 1, n - k)$, we have:
    \begin{equation*}
        \mathbf{R}\text{Hom}(S^{(\lambda,0)}\mathcal{Q}\otimes \det(\mathcal{Q})^{-1}, \mathcal{S}_{\mathcal{D}}(S^{(n-k,\lambda)}\mathcal{Q}[k-n])) = \mathbb{F}.
    \end{equation*}
    Using the functoriality of $\mathcal{S}_{\mathcal{D}}$, it is equivalent to showing:
    \begin{equation*}
        \mathbf{R}\text{Hom}(S^{(n - k,\lambda)}\mathcal{Q},S^{(\lambda,0)}\mathcal{Q}\otimes\det(\mathcal{R})[n - k]) = \mathbb{F}.
    \end{equation*}
    By Lemma \ref{LemmaVecBund}, it suffices to show
    \begin{equation*}
        H^{i+ n - k}(S^{(\lambda,0)}\mathcal{Q}\otimes S^{\overline{-(n - k,\lambda)}}\mathcal{Q}\otimes\det(\mathcal{R})^{\otimes (n - k + 1)}) = 
        \begin{cases}
            \mathbb{F}, \text{when } i = 0, \\
            0, \text{otherwise}.
        \end{cases}        
    \end{equation*}
    Using Littlewood--Richardson rule, we have 
    \begin{equation*}
        S^{(\lambda,0)}\mathcal{Q}\otimes S^{\overline{-(n - k,\lambda)}}\mathcal{Q}\otimes\det(\mathcal{R})^{\otimes (n - k + 1)} = \bigoplus_{|\gamma| = (k - 1)(n - k)}c^{\gamma}_{\lambda,\overline{-(n - k, \lambda)}} S^{\gamma}\mathcal{Q}\otimes\det(\mathcal{R})^{\otimes (n - k + 1)}.
    \end{equation*}
    Note that $\gamma_k\leq n - k,\gamma_{k - 1}\leq n - k$. Using the same argument as above, only those $\gamma$ with $\gamma_{k} = 0$ and $\gamma_{k - 1} = n - k$ might have non-trivial cohomology. Since $|\gamma| = (k - 1)(n - k)$, there is only one such possible case: $\gamma = (n - k)^{k - 1}$. Hence
    \begin{equation*}
        H^{i+ n - k}(S^{(\lambda,0)}\mathcal{Q}\otimes S^{\overline{-(n - k,\lambda)}}\mathcal{Q}\otimes\det(\mathcal{R})^{\otimes (n - k + 1)}) = H^{i + n - k}\left(c^{(n - k)^{(k - 1)}}_{\lambda,\overline{-(n - k,\lambda)}}S^{(n - k)^{k - 1}}\mathcal{Q}\otimes \det(\mathcal{R})^{\otimes (n - k + 1)}\right).
    \end{equation*}
    By the Borel--Weil--Bott theorem, there is at most one $i$ such that 
    \begin{equation*}
        H^{i + n - k}\left(c^{(n - k)^{(k - 1)}}_{\lambda,\overline{-(n - k,\lambda)}}S^{(n - k)^{k - 1}}\mathcal{Q}\otimes \det(\mathcal{R})^{\otimes (n - k + 1)}\right)\neq 0. 
    \end{equation*}
    Define $\sigma_0 \coloneqq (k+1,k)(k+2,k+1)\cdots(n,n-1)$. Then $l(\sigma) = n - k$, and
    \begin{equation*}
        ((n-k)^{k - 1}, 0 , (n - k+1)^{n - k})\cdot\sigma_{0} = (n - k)^{n},
    \end{equation*}
    hence 
    \begin{align*}
        H^{ n - k}\left(c^{(n - k)^{(k - 1)}}_{\lambda,\overline{-(n - k,\lambda)}}S^{(n - k)^{k - 1}}\mathcal{Q}\otimes \det(\mathcal{R})^{\otimes (n - k + 1)}\right) &= (S^{(n-k)^{(n)}}(\mathbb{F}^n)^\vee)^{\oplus c^{(n - k)^{(k - 1)}}_{\lambda,\overline{-(n - k,\lambda)}}} \\
        &=[\det((\mathbb{F}^n)^\vee)^{\otimes (n - k)}]^{\oplus c^{(n - k)^{(k - 1)}}_{\lambda,\overline{-(n - k,\lambda)}}} = \mathbb{F}^{c^{(n - k)^{(k - 1)}}_{\lambda,\overline{-(n - k,\lambda)}}}.
    \end{align*}
    From Lemma \ref{finalsteplemma}, we know that $c^{(n - k)^{(k - 1)}}_{\lambda,\overline{-(n - k,\lambda)}} = 1$. Hence 
    \begin{equation*}
        \mathbf{R}\text{Hom}(S^{(\lambda,0)}\mathcal{Q}\otimes \det(\mathcal{Q})^{-1}, \mathcal{S}_{\mathcal{D}}(S^{(n-k,\lambda)}\mathcal{Q}[k-n])) = \mathbb{F}.
    \end{equation*}
    And we may conclude.
\end{proof}

\begin{remark}
    We want to point out that, in a slightly different form, the same mutation is computed explicitly in \cite[Proposition 5.3]{fonarev2013minimal}.
\end{remark}

Theorem \ref{ThmMutation} allows us to prove Corollary \ref{cor:miracle}, which is needed to prove Theorem \ref{thm:split}. 

\begin{remark}\label{AdjointFunctor}
    Recall that we have adjunctions $i^*\dashv i_*$, $i_* \dashv i^! = i^*\circ (\otimes \det (\mathcal{Q}_{k-1,n-1})[-n + k ])$. 
    Moreover, if we denote by $f: T^\perp\hookrightarrow \mathcal{D}$ the inclusion, by property of Serre functors, the adjunction 
   $L_T\dashv f$ induces an adjunction $\mathcal{S}^{-1}_{\mathcal{D}}\circ f \circ \mathcal{S}_{T^{\perp}} \dashv L_T$. All of these adjunctions will be used the in proof of Corollary \ref{cor:miracle}.
\end{remark}

In the following, we omit the decorations on $\mathcal{Q}$; the Grassmannian to which $\mathcal{Q}$ belongs should be clear from context.

\begin{corollary} \label[corollary]{cor:miracle}
    $L_T i_* S^{\lambda} \mathcal{Q} = S^{(\lambda,0)}\mathcal{Q}\otimes \det (\mathcal{Q})^{-1} [n - k]$.
\end{corollary}




\begin{proof}
By \cite[Remark 0.4]{kapranov1985derived}, we have an isomorphism 
\[
S^{\lambda}(\mathcal{Q}\oplus \mathcal{O}) \cong \bigoplus_{\lambda'}S^{\lambda'}\mathcal{Q},
\]
where the summation is over all Young diagrams $\lambda'$ such that $\lambda_{1} \geq \lambda_{1}'\geq \lambda_{2}\geq \lambda_{2}'\geq \cdots$. We will denote this property by $\lambda' \trianglelefteq \lambda$. Note that $\lambda' \trianglelefteq \lambda \Rightarrow \lambda' \prec \lambda$. 

Recall that $T^{\perp}$ has a full exceptional collection $\left\langle S^{(\lambda,0)}\mathcal{Q}\otimes \det (\mathcal{Q})^{-1} | \lambda \in Y(k-1,n-k)  \right\rangle$. Therefore, to prove that $L_T i_* S^{\lambda} \mathcal{Q} = S^{(\lambda,0)}\mathcal{Q}\otimes \det (\mathcal{Q})^{-1} [n - k]$, it suffices to prove 
\begin{enumerate}[(i)]
        \item $\mathbf{R}\text{Hom} (L_T i_*S^{\lambda}\mathcal{Q}, S^{\mu}\mathcal{Q}\otimes (\det(\mathcal{Q}))^{-1}[n-k]) = 0 $ for all $\mu\in Y(k-1,n-k)$ with $\mu \prec \lambda$;
        \item $\mathbf{R}\text{Hom} (S^{\mu}\mathcal{Q}\otimes (\det(\mathcal{Q}))^{-1}[n-k], L_T i_*S^{\lambda}\mathcal{Q}) = 0 $ for all $\mu\in Y(k-1,n-k)$ with $\lambda \prec \mu$;
        \item $\mathbf{R}\text{Hom} (L_T i_* S^{\lambda}\mathcal{Q}, S^{\lambda}\mathcal{Q}\otimes (\det (\mathcal{Q}))^{-1}[n-k]) = \mathbb{F}$.
    \end{enumerate}
    \begin{enumerate}[(i)]
        \item 
       We compute 
        \begin{align*}
            \mathbf{R}\text{Hom} (L_T i_*S^{\lambda}\mathcal{Q}, S^{\mu}\mathcal{Q}\otimes \det(\mathcal{Q})^{-1}[n-k]) &\cong \mathbf{R}\text{Hom}(i_* S^{\lambda}\mathcal{Q}, S^{\mu}\mathcal{Q}\otimes \det(\mathcal{Q})^{-1}[n-k]) \\
            &\cong \mathbf{R}\text{Hom}( S^{\lambda}\mathcal{Q}, i^!(S^{\mu}\mathcal{Q}\otimes \det(\mathcal{Q})^{-1}[n-k])) \\
            &\cong \mathbf{R}\text{Hom}( S^{\lambda}\mathcal{Q}, i^*S^{\mu}\mathcal{Q} \otimes \det(\mathcal{Q})^{-1}[n-k] \otimes \det(\mathcal{Q})[-n+k] ) \\
             &\cong \mathbf{R}\text{Hom}( S^{\lambda}\mathcal{Q}, i^{*}S^{\mu}\mathcal{Q})\\
             &\cong \mathbf{R}\text{Hom}\left( S^{\lambda}\mathcal{Q}, \bigoplus_{\mu' \trianglelefteq \mu} S^{\mu'}\mathcal{Q}\right) = 0,
        \end{align*}
        where the final line is zero because $\mu' \trianglelefteq \mu \Rightarrow \mu' \prec \mu \prec \lambda$.

        \item Again, we compute
        \begin{align*}
            \mathbf{R}\text{Hom} (S^{\mu}\mathcal{Q}\otimes (\det(\mathcal{Q}))^{-1}[n-k], L_T i_*S^{\lambda}\mathcal{Q}) &\cong \mathbf{R}\text{Hom} (\mathcal{S}^{-1}_{\mathcal{D}}\circ f \circ \mathcal{S}_{T^{\perp}}(S^{\mu}\mathcal{Q}\otimes (\det(\mathcal{Q}))^{-1}[n-k]),i_*S^{\lambda}\mathcal{Q})\\
            &\cong\mathbf{R}\text{Hom} (S^{(n-k,\mu)}\mathcal{Q}, i_*S^{\lambda}\mathcal{Q}) \\
            &\cong\mathbf{R}\text{Hom} (i^{*}S^{(n-k,\mu)}\mathcal{Q},S^{\lambda}\mathcal{Q}) \\
            &\cong\mathbf{R}\text{Hom} \left(\bigoplus_{\nu \trianglelefteq (n-k,\mu)} S^{\nu}\mathcal{Q}, S^{\lambda}\mathcal{Q}\right),
        \end{align*}
       where the second line follows from Theorem \ref{ThmMutation}. Note that, if $\nu = (\nu_{1}, \dots, \nu_{k-1})$, then the condition $\nu \trianglelefteq (n-k,\mu)$ means $n-k \geq \nu_{1} \geq \mu_{1} \geq \nu_{2} \geq \mu_{2}\geq \cdots$. Therefore, $\lambda \prec \mu \prec \nu$ and the last line above is equivalent to zero.
        
        \item This computation is similar to i), but in the final line, only the term $\mathbf{R}\text{Hom}(S^{\lambda}\mathcal{Q}, S^{\lambda}\mathcal{Q})$ will remain. This is $\mathbb{F}$.
    \end{enumerate}

\end{proof}

\begin{remark} \label[remark]{remark:mutationbasechange}
For a divisorial scheme $S$, define $T_{S}\coloneqq \left\{ S^{\lambda}\mathcal{Q} \otimes p^{*}D^{b}\text{Vect}(S)\right\}$ and $T_{\mathbb{F}}\coloneqq T_{\Spec(\mathbb{F})}$. Then, the corresponding statements for divisorial schemes over a field of characteristic zero follow: for $E\in D^{b}\text{Vect}(S)$,
\begin{align*}
    L_{T_S} (\phi^* S^{(n-k,\lambda)}\mathcal{Q} \otimes p^* E) &\simeq \phi^* L_{T_{\mathbb{F}}}\left(S^{(n-k,\lambda)}\mathcal{Q}\right) \otimes p^* E \\
     &\simeq \phi^*\left(S^{(\lambda,0)}\mathcal{Q} \otimes \det(\mathcal{Q})^{-1}[n-k]\right) \otimes p^*E \\
     &\simeq S^{(\lambda,0)}\mathcal{Q} \otimes \det(\mathcal{Q})^{-1}[n-k] \otimes p^*E,
\end{align*}
and
\begin{align*}
    L_{T_{S}}i_{*}(\phi^{*}S^{\lambda} \mathcal{Q} \otimes p^{*}E) &\simeq L_{T_{S}}\left(i_{*}\phi^{*}S^{\lambda}\mathcal{Q}\otimes p^{*}E \right) \quad (\text{projection formula : $p^{*} \simeq i^{*}p^{*}$}) \\ 
    & \simeq L_{T_{S}} \left(\phi^{*}i_{*}S^{\lambda}\mathcal{Q}\otimes p^{*}E \right) \quad (\text{this is true as $\phi^{*}i_{*}\mathcal{O} \xrightarrow{\simeq} i_{*}\phi^{*}\mathcal{O}$})  \\
    & \simeq \phi^{*}\left(L_{T_{\mathbb{F}}}i_{*}S^{\lambda}\mathcal{Q}\right)\otimes p^{*}E\quad \\
    & \simeq \phi^{*}\left(S^{\lambda}\mathcal{Q}\otimes \det(\mathcal{Q})^{-1}[n-k]\right)\otimes p^{*}E \\
     & \simeq \left(S^{\lambda}\mathcal{Q}\otimes \det(\mathcal{Q})^{-1}[n-k]\right)\otimes p^{*}E.
\end{align*}
Here, the mutation commutes with base-change because $\phi^* S^{(n-k,\lambda)}\mathcal{Q} \otimes p^* E $  and $\phi^{*}i_{*}S^{\lambda}  \mathcal{Q} \otimes p^{*}E$ sit inside two distinguished triangles associated with the same semi-orthogonal decomposition. 
\end{remark}

In the proof of Theorem \ref{thm:compute}, we also need the following computation. We include it here because the proof uses the Borel--Weil--Bott Theorem and similar arguments to the above. 

\begin{lemma} \label{lemma:0end}
    In the above notation, we have $H^{i}\left(\text{Gr}^{k,n}_{\mathbb{F}}, (S^{\lambda}\mathcal{Q})^{\vee} \otimes S^{\lambda}\mathcal{Q} \right) = 0$ for all $i > 0$.
\end{lemma}

\begin{proof}
    Let $\lambda = (\lambda_{1}, \dots, \lambda_{k})$. Then, 
    \[
    (S^{\lambda}\mathcal{Q})^{\vee}\otimes S^{\lambda}\mathcal{Q} \cong \bigoplus_{\gamma} c^{\gamma}_{\lambda,\overline{-\lambda}} S^{\gamma}\mathcal{Q} \otimes \det\mathcal{R}^{\otimes \lambda_{1}}. 
    \]
    Note, $|\gamma| = |\overline{-\lambda}| + |\lambda| = k\lambda_{1}$. Furthermore, we have $\gamma_{k} \leq \lambda_{1}$. Therefore, there are two cases to consider: either $\gamma_{k} < \lambda_{1}$ or $\gamma_{k} = \lambda_{1}$.  

    In the first case, we can permute the element $\gamma_{k}$ to the right in the sequence $(\gamma, (\lambda_{1})^{n-k})$ by a series of adjacent transpositions to eventually get a sequence of the form $(\dots, \lambda_{1}-1, \lambda_{1}, \dots)$. Therefore, by the Borel--Weil--Bott Theorem, all cohomology groups are zero in this case. 

    In the second case, as $|\gamma| = k\lambda_{1}$ and $\gamma_{1}\geq \dots \geq \gamma_{k}$, hence $\gamma = (\lambda_{1},\dots , \lambda_{1})$. Therefore, in the Borel--Weil--Bott Theorem, we need to consider the sequence $(\lambda_{1})^{n}$. By this theorem, the identity permutation is the unique permutation such that $(\lambda_{1})^{n} \cdot \text{id} = (\lambda_{1})^{n}$ is a partition, so that we deduce all higher cohomology groups are zero in this case also. 
\end{proof}

\appendix

\section{Push-forward in Hermitian $K$-theory,  by M. Schlichting} \label{appendix:marco}
\vspace{2ex}

\subsection{Introduction}

Let $X$ be a divisorial scheme, a.k.a. a scheme with an ample family of line bundles \cite[Definition 2.11]{TT}.
For instance, $X$ can be quasi-projective over an affine scheme such as $\pp^n_R$ or $\Gr_R(d,n)$, or $X$ can be regular noetherian and separated.
Let $i:Z\subset X$ be a regular embedding of codimension $c$. 
So, the ideal sheaf $\I \subset O_X$ of the embedding is locally generated by a regular sequence of length $c$. 
Denote by $\omega_i = \det(\I/\I^2)^{-1}$ the dual of the determinant bundle of the conormal sheaf $\I/\I^2$.
The goal of this appendix is to construct push forward maps 
$$Ri_*: GW^{[r-c]}(Z,\omega_i\cdot i^*L) \to GW(X, L),\hspace{3ex} Ri_*: \GW^{[r-c]}(Z,\omega_i\cdot i^*L) \to \GW^{[r]}(X, L)$$
 of Grothendieck--Witt and Karoubi--Grothendieck--Witt spectra such that $j^*Ri_*=0$ for $j:U =X-Z \subset X$ the open embedding of the complement.
 This is used in the proof of Theorem \ref{thm:split} in the body of the text.
The maps will be induced by (a zigzag of) maps of complicial exact (or dg) categories with dualities and weak equivalences 
$$(\sPerf(Z),\omega_i [-c]) \to (\sPerf(X),O_X)$$
between the usual categories of strictly perfect complexes on $Z$ and $X$ which is the usual $Ri_*$ upon taking derived categories.
The point is to make the constructions on the level of complicial exact (or dg) categories and to give explicit duality compatibility maps.

Our construction is elementary in that we do not use infinity categories nor the existence of a right adjoint to $Ri_*$  as in \cite{calmes2024motivic} nor do we use Grothendieck duality as in \cite{huang2023connecting}.
Instead, we deal with the codimension $1$ case first via a dg resolution and the higher codimension case is reduced to codimension $1$ via blow ups.
As this is not needed in the body of the text, we have refrained from giving details as to functoriality and various base-change and excess intersection formulas, though functoriality of push-forward for the maps in a blow-up square are immediate from our construction; see Remark \ref{rmk:FunctorialSquarePushGW}.

The conscientious reader will notice that our schemes ought to be defined over $\Z[1/2]$ in order to apply $\GW$ and $GW$ as defined in \cite{schlichting17}. 
However, our constructions also work when $GW$ and $\GW$ are understood as the symmetric Grothendieck--Witt and Karoubi--Grothendieck--Witt theories in the sense of \cite{calmes2023hermitian}.
\vspace{1ex}

\noindent{\em Notations and conventions}.
All our schemes are assumed to be divisorial.
All our complicial exact (or dg) categories will be strictly perfect complexes $\sPerf(\A)$ on a (commutative dg) scheme $(X,\A)$; see \S\ref{sec:sPerfDG}.
Such categories are closed symmetric monoidal, and all dualities will be given by the internal hom into an object $L$ and equipped with the canonical double dual identification $E \to Hom(Hom(E,L),L)$ adjoint to the evaluation $Hom(E,L) \otimes E \to L$.
We specify such categories with weak equivalences and duality simply by $(\sPerf(\A),L)$ where the weak equivalences are understood to be the quasi-isomorphisms.
If $\T \subset \D\sPerf(\A)$ is a full triangulated subcategory of the derived category of $\sPerf(\A)$ then 
$\left(\frac{\sPerf(\A)}{\T},L\right)$ will mean the complicial exact (or dg) category $\sPerf(\A)$ with duality in $L$ where the weak equivalences are the maps whose cones are retracts of objects in $\T$.
The tensor product of line bundles $L$ and $M$ will be denoted by $L\cdot M$.

\subsection{Push forward for codimension $1$ regular embeddings}

Let $i:Z \subset X$ be a regular embedding of codimension $1$ with $X$ divisorial.
The ideal sheaf $\I = \ker (O_X \to i_*O_Z)$ of the embedding is a line bundle on $X$.
Then $\A_Z = (\I \hookrightarrow O_X)$ defines a  quasi-coherent differential graded algebra (dga) on $X$, where $O_X$ sits in degree zero.
The quotient map $O_X \to i_*Q_Z$ induces a quasi-isomorphism of dga's $f: \A_Z \to i_*O_Z$.
A dg $\A_Z$-module $M$ is strictly perfect if $X$ has a covering by open affines $U_i \subset X$ such that $M(U_i)$ is a finite cell $\A_Z(U_i)$-module, that is, it has a finite filtration by dg $\A_Z(U_i)$-submodules with successive quotients isomorphic to finite direct sums of $\A_Z(U_i)[r]$ for various $r\in \Z$.
A homomorphism $M \to N$ of dg $\A_Z$-modules is a quasi-isomorphism if it is an isomorphism on all homology sheaves; see \S\ref{sec:sPerfDG} for more details.

Since $\sPerf(i_*O_Z) = \sPerf(Z)$, we obtain a map of complicial exact categories $f^*:\sPerf(\A_Z) \to \sPerf(Z):M \mapsto i_*O_Z\otimes_{\A_Z}M$ which induces an equivalence of derived categories (after idempotent completion), by Corollary \ref{cor:sPerfEq}:
\begin{equation}
\label{eqn:KPushCod1}
f^*:\widetilde{\D}\sPerf(\A_Z) \stackrel{\simeq}{\longrightarrow} \D\sPerf(Z).
\end{equation}
Since every strictly perfect $\A_Z$-module is a strictly perfect $O_X$-module, the restriction of scalars along the ring homomorphism $O_X \to \A_Z$ induces a dg functor $\fgt:\sPerf(\A_Z) \to \sPerf(X)$.
Combining the two functors, the functor $Ri_*$ can be represented as the composition of functors
\begin{equation}
\label{eqn:PushCod1}
\xymatrix{\sPerf(Z) & \sPerf(\A_Z) \ar[l]^{\sim}_{f^*} \ar[r]^{\fgt} & \sPerf(X)}
\end{equation}
where $\stackrel{\sim}{\to}$ means equivalence of derived categories after idempotent completion.

We will equip the categories in (\ref{eqn:PushCod1}) with dualities that are preserved by the functors between them.
Let $\A$ be a commutative differential graded quasi-coherent $O_X$-algebra.
We make the category $\sPerf(\A)$ of strictly perfect dg $\A$-modules into a complicial exact category with weak equivalences and duality as follows.
The dual $E^{\vee}$ of a strictly perfect dg $\A$-module $E$ is the dg $\A$-module
$$E^{\vee} = Hom_{\A}(E,\A).$$
Locally, if $E(U)$ is a finite cell $A(U)$-algebra, then $E^{\vee}(U) = Hom_{\A(U)}(E(U),\A(U))$ is a finite cell $\A(U)$-algebra. 
See \cite[\S III.4]{KrizMay}.
In particular, $E^{\vee}$ is strictly perfect if $E$ is, and $(\vee, \can)$ defines a duality on $\sPerf(\A)$.
The duality preserves quasi-isomorphisms. 
Hence, we obtain a complicial exact category with weak equivalences and duality
$$\sPerf(\A)= (\sPerf(\A),\quis,\vee,\can).$$
More generally, if $L$ is a line bundle on $X$ and $i\in \Z$ an integer, then
$$E^{\vee_{L[i]}} = Hom_{\A}(E,\A\otimes_{O_X} L[i])$$
defines a duality on $\sPerf(\A)$, and we obtain the 
complicial exact category with weak equivalences and duality
$$(\sPerf(\A), L[i]).$$

\begin{definition}
\label{dfn:GWpushCod1}
Let $i:Z \subset X$ be a codimension $1$ regular embedding of divisorial schemes.
We make the restriction of scalars $\fgt:\sPerf(\A_Z) \to \sPerf(X)$ into a form functor
$$\fgt: (\sPerf(\A_Z),L) \to (\sPerf(X), \I\cdot L[1])$$
with duality compatibility map 
$$Hom_{\A_Z}(E,\A_Z\otimes L) \to Hom_{O_X}(E,\I\otimes L[1])$$
induced by the map of complexes $\A_Z \to \I[1]$ that is the projection onto the degree $1$ part.
This map is a quasi-isomorphism as it is locally so.
In this way, we obtain a push-forward map 
$$Ri_*:\GW^{[n]}(Z,i^*L) \to \GW^{[n+1]}(X,\I_Z \cdot L)$$  by applying the functor $\GW$ to the composition
\begin{equation}
\label{eqn:GWPushCod1}
(\sPerf(Z), i^*L[n]) \stackrel{\sim}{\leftarrow} (\sPerf(\A_Z), L[n]) \stackrel{\fgt}{\longrightarrow} (\sPerf(X), \I\otimes L[n+1])
\end{equation}
since $\GW$ sends the left map labelled $\stackrel{\simeq}{\leftarrow}$ to a weak equivalence of spectra \cite[Theorem 8.9]{schlichting17}.
Replacing $L$ with $\I_Z^{-1}\cdot L$ yields the pushforward map
$$Ri_*:\GW^{[n]}(Z,\omega_i\cdot i^*L) \to \GW^{[n+1]}(X, L).$$
\end{definition}

\begin{lemma}
\label{lem:Cod1Uzero}
Let $i:Z \to X$ be a codimension $1$ regular embedding of divisorial schemes and $j_U:U=X-Z \to X$ the embedding of the open complement.
Then $j_U^*\circ Ri_*$ has image in the acyclic complexes of $\sPerf(U)$.
In particular, the following composition is zero in the homotopy category of spectra
$$0=j_U^*\circ Ri_*: GW^{[r-1]}(Z,\omega_i\cdot i^*L) \stackrel{Ri_*}{\longrightarrow} GW^{[r]}(X,L) \stackrel{j_U^*}{\longrightarrow} GW^{[r]}(U,j^*L).$$
\end{lemma}

\begin{proof}
Denote by ${\A_Z}_{|U}$ be the restriction of the sheaf of dga's $\A_Z$ to $U$.
The claim follows from the commutative diagram
$$\xymatrix{
& \sPerf(\A_Z) \ar[dl]_{\sim} \ar[r]^{j_U^*}  \ar[d]^{\fgt} & \sPerf({\A_Z}_{|U}) \ar[d]^{\fgt} \\
\sPerf(Z) \ar[r]_{Ri_*} & \sPerf(X) \ar[r]_{j_U^*} & \sPerf(U)}$$
since ${\A_Z}_{|U}$ is a sheaf of contractible dga's.
In particular, any object of $\sPerf({\A_Z}_{|U})$ is acyclic.
\end{proof}

\subsection{Push forward for projective bundle projections}

Let $Y$ be a divisorial scheme and $\E$ a vector bundle on $Y$ of rank $r+1$.
Denote by $q:\pp\E \to Y$ the  projection.
The full triangulated subcategory 
$\ker Rq_*$ of $\D\sPerf(\pp\E)$ is generated by $Lq^*\sPerf(Y)\otimes O(-i)$ for $i=1,...,r$ \cite{Thomason:Eclate}, \cite[Lemmas 1.2, 1.4]{CHSW}.
The functor $Rq_*$ factors through $\D\sPerf(Y)/\ker Rq_*$ and induces an equivalence of triangulated categories 
$Rq_*: \D\sPerf(\pp\E)/\ker Rq_* \stackrel{\sim}{\longrightarrow} \D\sPerf(Y)$ with inverse $Lq^*$ \cite[Lemma 1.4]{CHSW}.
We can thus represent the functor $Rq_*$ as the composition of functors
\begin{equation}
\label{eq:Rq*}
Rq_*: \xymatrix{ \sPerf(\pp\E) \ar[r]^{\hspace{-1ex}\text{Loc}} & \frac{\sPerf(\pp\E)}{\ker Rq_*} & \sPerf(Y). \ar[l]^{\hspace{1ex}\sim}_{\hspace{1ex}Lq^*}}
\end{equation}

We will equip the categories in (\ref{eq:Rq*}) with dualities that are preserved by the functors between them.
Recall the sheaf of K\"ahler differentials $\Omega_q$ of $q:\pp\E \to Y$.
It is $\Omega_q = \ker(\mu_{\E})$, where $\mu_{\E}$ is the multiplication map $\mu_{\E}:O_{\pp\E}(-1)\otimes \E \to O_{\pp\E}$.
Denote by $\Kosz(\mu_{\E})$ the Koszul complex of $\mu_{\E}$.
This is an acyclic complex concentrated in degrees $0,...,r+1$.
Its degree $r+1$ and degree $0$ parts are 
$$\Kosz_{r+1}(\mu_{\E}) = \Lambda^{r+1}(\E(-1)) =(\det\E)(-r-1) = \omega_q,\hspace{3ex}\Kosz_{0}(\mu_{\E}) = O_{\pp\E}.$$
Denote by $\Kosz_{\leq r}(\mu_{\E})$ the full subcomplex of $\Kosz_{\leq r}(\mu_{\E})$ concentrated in degrees $0,...,r$.
The degree $r+1$ differential of $\Kosz(\mu_{\E})$ defines a quasi-isomorphism of complexes 
$$\alpha: \omega_q[r] \stackrel{\sim}{\longrightarrow} \Kosz_{\leq r}(\mu_{\E}).$$
The degree $0$ inclusion 
$$\beta: O_{\pp\E} \to \Kosz_{\leq r}(\mu_{\E})$$ has cone in the 
subcategory 
$ \ker Rq_*$. 
In particular, it is a weak equivalence in the complicial exact category with weak equivalences $\frac{\sPerf(\pp\E)}{\ker Rq_*}$.
Since the subcategory $\ker Rq_*$ is closed under the duality with values in $\omega_q = \det \Omega_q = O_{\pp\E}(-r-1)\det\E$, we obtain a diagram of complicial exact categories with weak equivalences and duality
\begin{equation}
\label{eq:Rq*v2}
 \xymatrix{ \left(\sPerf(\pp\E),\omega_q[r]\right) \ar[r]^{\hspace{-4ex}(\text{Loc},\alpha)} & \left( \frac{\sPerf(\pp\E)}{\ker Rq_*},\Kosz_{\leq r}(\mu_{\E})\right) & \left(\sPerf(Y) \ar[l]^{\hspace{5ex}\sim}_{\hspace{5ex}(q^*,\beta)},O_Y\right)},
\end{equation}
where the arrow in the wrong direction is an equivalence of associated derived categories.
Applying Karoubi-Grothendieck-Witt spectra, this yields a well-defined map in the homotopy category of spectra
$$Rq_* = (q^*,\beta)^{-1}\circ (\text{Loc},\alpha): \GW^{[n+r]}(\pp\E,q^*L\cdot \omega_q) \longrightarrow \GW^{[n]}(Y,L).$$

\subsection{Push forward for regular embeddings}

Let $X$ be a divisorial scheme, and let $i:Y \to X$ be a regular embedding of pure codimension $c$, and consider the blow-up square
\begin{equation}
\label{eqn:BlowUpSquare}
\xymatrix{Y' \ar[r]^{j} \ar[d]_q & X'\ar[d]^p\\ Y \ar[r]_i & X.}
\end{equation}
The exceptional divisor inclusion $j:Y' \to X$ is a regular embedding of codimension $1$ with ideal sheaf $O_{X'}(1)$.
The full triangulated subcategory
$\ker(Rp_*) \subset \D\sPerf(X')$
is generated by the subcategories $Rj_*Lq^*\D\sPerf(Y)\otimes O_{X'}(-i)$,  $i=1,...,c-1$; see \cite{Thomason:Eclate}, \cite[Lemma 1.2]{CHSW}.

\begin{lemma}
The following diagram commutes up to natural isomorphism of functors 
\begin{equation}
\label{eqn:BlowUpPushD}
\xymatrix{ 
\D\sPerf(Y)  \ar[r]^{Lq^*} \ar[d]_{Ri_*} & \D\sPerf(Y') \ar[r]^{Rj_*} &\D\sPerf(X') \ar[d]^{\text{Loc}}  \\
 \D\sPerf(X) \ar[r]_{\hspace{-1ex}Lp^*}  & \D\sPerf(X') \ar[r]_{\hspace{-5ex}\text{Loc}} & \D\sPerf(X')/\ker Rp_*}
\end{equation}
\end{lemma}

\begin{proof}
The unit of adjunctions $1_Y \cong Rq_*Lq^*$ and $1_X \cong Rp_*Lp^*$ are isomorphisms, and the co-unit of adjunction $Lp^*Rp_* \to 1_{X'}$ has cone in $\ker Rp_*$.
Using the natural isomorphism $Ri_*Rq_*\cong Rp_*Rj_*$, we obtain the sequence of natural isomorphisms of functors
$$\xymatrix{
Lp^*Ri_* \ar[r]^{\hspace{-5ex}\cong}_{\hspace{-5ex}\text{unit}} & Rp^*Ri_*Rq_*Lq^* \ar@{=}[r]^{\sim} & Lp^*Rp_*Rj_*Lq^* \ar[rr]^{\hspace{2ex}\mod\ker Rp_*}_{\cong\hspace{1ex}\text{co-unit}} && Rj_*Lq^*.}$$
\end{proof}

Since the composition of the bottom two horizontal functors in (\ref{eqn:BlowUpPushD}) is an equivalence \cite{Thomason:Eclate}, \cite[Lemma 1.4]{CHSW}, we can represent the functor $Ri_*$ as the composition of functors
\begin{equation}
\label{eqn:BlowUpPush}
\sPerf(Y) \stackrel{q^*}{\to} \sPerf(Y') \stackrel{Rj_*}{\to} \sPerf(X') \stackrel{\text{Loc}}{\longrightarrow} \frac{\sPerf(X')}{\ker Rp_*} \stackrel{\hspace{1ex}p^*}{\leftarrow}\sPerf(X),
\end{equation}
where the arrow in the wrong direction is an equivalence on derived categories.
In other words, $Ri_*$ is determined by the commutativity of the diagram
$$\xymatrix{
\sPerf(Y) \ar[r]^{q^*}_{\sim} \ar[d]_{Ri_*} & \frac{ \sPerf(Y')}{\ker Rq_*} \ar[d]_{Rj_*} & \frac{\sPerf (\A_{Y'})}{\ker Rq_*}  \ar[dl]^{\fgt}  \ar[l]_{\sim}\\
\sPerf(X) \ar[r]_{p^*}^{\sim} &  \frac{ \sPerf(X')}{\ker Rp_*} & }.$$

Our task is to equip the categories in (\ref{eqn:BlowUpPush}) with dualities that are preserved by the given functors.

\begin{lemma}
\label{lem:DualIdentnu}
The twist $\nu(i) = \nu\otimes O_{X'}(i)$ of the ideal sheaf inclusion $\nu: O_{X'}(1) \to O_{X'}$ has cone in $\ker(Rp_*)$ for $i=-1,...-c+1$.
In particular, the composition 
$$\gamma: O_{X'} \stackrel{\nu(-1)}{\longrightarrow} O_{X'}(-1) \stackrel{\nu(-2)}{\longrightarrow} \cdots \stackrel{\nu(-c+1)}{\longrightarrow} O_{X'}(-c+1)$$
has cone in $\ker(Rp_*)$.
\end{lemma}

\begin{proof}
By definition, the cone of $\nu$ is $Rj_*O_{Y'}$.
For  $i=-1,..., -c+1$ we then have
$$Rp_*\cone(\nu\otimes O_{X'}(i)) = Rp_*Rj_*O_{Y'}(i) = Ri_*Rq_*(O_{Y'}(i)) =0$$
since $O_{Y'}(i)\in \ker Rq_*$ for $i=-1,...,-c+1$.
\end{proof}

Recall that $\omega_i=(\det\mathcal{N})^{-1}$ where $\mathcal{N}=\I/\I^2$ is the conormal sheaf on $Y$ of the regular embedding $i:Y \to X$ defined by the sheaf of ideals $\I\subset O_X$.
In particular, we have a canonical isomorphism $\can:\omega_q \cdot q^*\omega_i\cong O_{Y'}(-c)$ given by multiplication.
This allows us to define the following zigzag of form functors

\begin{equation}
\label{eqn:Ri*Zigzag}
\xymatrix{
(\sPerf(Y),\omega_i[-c]) \ar[r]^{\hspace{-9ex}(q^*,\beta)} &
 \left(\frac{\sPerf(Y')}{\ker Rq_*},\Kosz_{\leq c-1}(\mu_{\mathcal{N}}) \cdot q^*\omega_i[-c]\right) & \\
&
\left(\frac{\sPerf(Y')}{\ker Rq_*},\omega_q\cdot q^*\omega_i[-1]\right) \ar[u]_{(1,\alpha)}^{\simeq} \ar[d]^{(1,\can)}_{\cong} &  \\
&\left(\frac{\sPerf(Y')}{\ker Rq_*},O_{Y'}(-c)[-1]\right)  \ar[d]^{Rj_*}_{\text{(\ref{eqn:GWPushCod1})}} & \\
& \left(\frac{\sPerf(X')}{\ker Rp_*},O_{X'}(-c+1)\right) & (\sPerf(X),O_X). \ar[l]_{\hspace{5ex}(p^*,\gamma)}^{\hspace{5ex}\simeq}
}
\end{equation}

\begin{definition}
Let $i:Y \to X$ be a codimension $c$ regular embedding of divisorial schemes.
For a line bunde $L$ on $X$ and integer $r\in \Z$, the push-forward map
$$Ri_*: \GW^{[r-c]}(Y,i^*L\cdot \omega_i) \longrightarrow \GW^{[r]}(X,L)$$
is obtained by applying $\GW$ to (\ref{eqn:Ri*Zigzag}) and noting that the arrows in the wrong direction yield isomorphisms in the homotopy category of spectra.
\end{definition}

\begin{lemma} \label{lemma:zerocomp}
Let $i:Y \to X$ be a codimension $c$ regular embedding of divisorial schemes and $j_U:U=X-Y \to X$ the open embedding of the complement.
Then the following composition is zero
$$0=j_U^*\circ Ri_*: \GW^{[r-c]}(Y,i^*L\cdot \omega_i) \stackrel{Ri_*}{\longrightarrow} \GW^{[r]}(X,L) \stackrel{j_U^*}{\longrightarrow} \GW^{[r]}(U,j^*L).$$
\end{lemma}

\begin{proof}
Denote by $j'_U: U=X'-Y' \to X'$ the open embedding of the complement of the exceptional divisor $Y'$ in the blow-up $X'$ of $X$ along $Y$.
By Lemma \ref{lem:Cod1Uzero}, the functor $(j'_U)^*$ sends the category $\ker Rp_*$, which is generated by $Rj_*\ker Rq_*$, to acyclic objects in $\sPerf{U}$.
In particular, the following diagram commutes
$$\xymatrix{\sPerf(X) \ar[r]^{p^*}_{\sim} \ar[rd]_{j_U^*} & \frac{\sPerf(X')}{\ker Rp_*} \ar[d]^{(j'_U)^*} \\
 & \sPerf(U).}$$
Thus, it suffices to show that the composition
$$\xymatrix{\frac{\sPerf(Y')}{\ker Rq_*} \ar[r]^{Rj_*} & \frac{\sPerf(X')}{\ker Rp_*} \ar[r]^{(j'_U)^*} & \sPerf(U)}$$
has image in the acyclic complexes of $\sPerf(U)$.
But this is a consequence of Lemma \ref{lem:Cod1Uzero}.
\end{proof}

\begin{remark}
The push forward constructed here also yields push-forward for $\K$-theory and its connected cover $K$.
By \cite[Theorem 8.14]{schlichting17}, $GW$ is the homotopy pull back of $\GW \to \K^{hC_2} \leftarrow K^{hC_2}$.
Hence, the push-forwards constructed for $\GW$-theory (and $K$-theory) above also yield push forward maps for $GW$:
$$Ri_*: GW^{[r-c]}(Y,i^*L\cdot \omega_i) \longrightarrow GW^{[r]}(X,L).$$
\end{remark}

\begin{remark}
\label{rmk:FunctorialSquarePushGW}
In the situation of the blow-up square (\ref{eqn:BlowUpSquare}), we have $\omega_p=O_{X'}(-c+1)$, and we define the push-forward map $Rp_*:\GW^{[r]}(X',\omega_p\cdot p^*L) \to \GW^{[r]}(X,L)$ as the result in $GW$-theory of the composition
$$\xymatrix{\left(\sPerf(X'),\omega_p\right) \ar[r]^{\text{Loc}} & \left(\frac{\sPerf(X')}{\ker Rp_*},\omega_p\right) & (\sPerf(X),O_X). \ar[l]_{(p^*,\gamma)}^{\sim}}$$
Then we have functoriality of push forward for the square (\ref{eqn:BlowUpSquare}), $Rp_*\circ Rj_* \circ(1,\can)= Ri_*\circ Rq_*$ as maps $\GW^{[r-1]}(Y',\omega_q\cdot q^*\omega_iL) \to \GW^{[r]}(X,L)$, where $\can:\omega_q\cdot q^*\omega_i \cong \omega_j\cdot j^*\omega_p$ was defined after Lemma \ref{lem:DualIdentnu}.
Similarly, for $GW$ in place of $\GW$.
\end{remark}

\subsection{Strictly perfect complexes on dg schemes}
\label{sec:sPerfDG}

We finish the appendix with details regarding the equivalence (\ref{eqn:KPushCod1}).
\vspace{1ex}

Let $A$ be a differential graded algebra (a.k.a. dga) and $M$ a dg $A$-module. 
All our modules are left modules.
Recall that $M$ is called {\em free} ({\em finitely generated free}) if $M$ is a (finite) direct sum $\bigoplus_{i}A[r_i]$ of modules of the form $A[r_i]$.
A dg $A$-module $M$ is called {\em cellular} ({\em finite cellular}) if $M$ has a (finite) filtration $0 =M_0 \subset M_1 \subset \cdots \subset M$ by dg $A$-submodules such that $M=\bigcup_{i\geq 0}M_i$ and $M_{i}/M_{i-1}$ is (finitely generated) free.
The category of finite cellular modules is closed under extensions in the category of all dg $A$-modules.
See \cite[\S III]{KrizMay}.
\vspace{1ex}

A {\em dg scheme} is a pair $(X,\A)$ of a scheme $X$ together with a quasi-coherent sheaf of dg $O_X$-algebras $\A$.
It is called {\em divisorial} if the scheme $X$ is quasi-compact with ample family of line bundles $\calL = \{L_1,...,L_n\}$  \cite[Definition 2.1.1]{TT}.
A quasi-coherent dg $\A$-module is a complex of quasi-coherent $O_X$-modules $M$ together with a unital and associative multiplication $\A \otimes M \to M$ of complexes.
A homomorphism $M \to N$ of dg $\A$-modules is called {\em quasi-isomorphism} if for all affine open $U \subset X$, the map $M(U) \to N(U)$ is a quasi-isomorphism.

A dg $\A$-module $M$ is called {\em locally cellular} ({\em strictly perfect}) if $X$ has an open affine cover $\bigcup_iU_i = X$, such that the dg $\A(U)$-modules $M(U_i)$ are cellular (finite cellular).
The category $\sPerf(\A)$ of strictly perfect $\A$-modules is closed under extensions in the category $\Qcoh(\A)$ of all quasi-coherent dg $\A$-modules. 
Examples of locally cellular $\A$-modules are the modules of the form $\bigoplus_i\A \otimes L_i[r_i]$ with $L_i \in \calL$ and $r_i\in \Z$, which we call {\em free relative to $\calL$}, and modules that have an exhaustive filtration with sucessive quotients free relative to $\calL$, which we call {\em cellular relative to $\calL$}.

Tensor product over $\Z$ with bounded complexes of finitely generated free $\Z$-modules makes the categories $\sPerf(\A)$ and $\Qcoh(\A)$ into complicial exact category with weak equivalences the quasi-isomorphisms \cite[A.2.15]{mySedano}.
We write $\calK\sPerf(\A)$ (resp. $\calK(\A)$) for the homotopy category of strictly perfect (resp quasi-coherent) dg $\A$-modules: two maps $M \to N$ are homotopic if their difference is homotopic to zero where the latter means that it factors through the cone $CM=M\otimes_{\Z}C$ of $M$; see \cite[\S\S 3.2.17, A2.14, A2.15]{mySedano}.
The homotopy categories are triangulated categories \cite[3.2.5]{mySedano}.
We write $\D\sPerf(\A)= \sPerf(\A)[\quis^{-1}]$ for the derived category of strictly perfect $\A$-modules, and  $\D(\A) = \Qcoh\A[\quis^{-1}]$ for the derived category of all quasi-coherent dg $\A$-modules.
They are obtained from  $\calK\sPerf(\A)$ and $\calK(\A)$ by inverting the quasi-isomorphisms via a calculus of fractions.

\begin{proposition}
\label{prop:CellApprox}
Let $(X,\A)$ be a divisorial dg scheme with ample family of line bundles $\calL = \{L_i|\ i=1,...,n\}$.
\begin{enumerate}
\item
\label{prop:CellApprox:item1}
For any quasi-coherent dg $\A$-module $M$, there is a map of dg $\A$-modules $F \to M$ which is surjective on homology sheaves, and $F$ is free relative to $\calL$.
\item
\label{prop:CellApprox:item2}
For any quasi-coherent dg $\A$-module $M$, there is a quasi-isomorphism $F \stackrel{\sim}{\longrightarrow} M$ where $F$ is cellular relative to $\calL$.
\item
\label{prop:CellApprox:item3}
For any quasi-isomorphsm $M \stackrel{\sim}{\longrightarrow} E$ of quasi-coherent dg $\A$-modules with $E$ strictly perfect, there is a quasi-isomorphism $E'  \stackrel{\sim}{\longrightarrow} M$ with $E'$ strictly perfect.
\item
\label{prop:CellApprox:item4}
Strictly perfect $\A$-modules are compact in $\D(\A)$, 
the category $D(\A)$ is compactly generated with compact generators $\A\otimes L$, $L \in \calL$, and the inclusion of strictly perfect $\A$-modules in all quasi-coherent dg $\A$-modules induces an equivalence 
$$\widetilde{\D}\sPerf(\A) \stackrel{\simeq}{\longrightarrow} \D^c(\A)$$
between the idempotent completion of $\D\sPerf(\A)$ and the subcategory $\D^c(\A)$ of compact objects in $\D(\A)$.
\end{enumerate}
\end{proposition}

\begin{proof}
(\ref{prop:CellApprox:item1}).
By ampleness of $\calL$, for all $r\in \Z$ there is a surjection of sheaves $\bigoplus_{i\in S_r}L_{i,r} \to Z_rM$ onto the sheaf of $r$-cycles of $M$ where $L_{i,r} \in \calL$.
That surjection induces the desired map of dg $\A$-modules $F=\bigoplus_{r\in \Z,\ i\in S_r}\A\otimes L_{i,r}[r] \to M$.

(\ref{prop:CellApprox:item2}).
We construct an increasing sequence of dg $\A$ modules $0=F_0 \subset F_1 \subset \cdots $ together with  maps $\alpha_r:F_r\rightarrow M$ of dg $\A$-modules such that, ${\alpha_{r+1}}_{|F_{r}}=\alpha_r$, the maps $\alpha_r:F_r \to M$ are surjective on homology sheaves for $r\geq 1$, the composition  $\ker \calH_*(\alpha_r) \subset \calH_*(F_{r}) \to \calH_*(F_{r+1})$ is zero, and the quotients $F_{r+1}/F_r$ are free relative to $\calL$. 
Then $F = \bigcup_{i\geq 0}F_r$ is cellular relative to $\calL$, and $F \to M$ is a quasi-isomorphism.
The dg $\A$-module $F_1$ was constructed in (\ref{prop:CellApprox:item1}).
Assume $\alpha_r: F_r \to M$ constructed. 
Using (\ref{prop:CellApprox:item1}), we can choose a map $N \to P(\alpha_r) = PM\times_{M} F_r$ that is surjective on homology sheaves with $N$ free relative to $\calL$ where $PM \to M$ is the path surjection \cite[\S A.2.15]{mySedano}.
Let $F_{r+1}$ be the pushout of $N\to F_r$ along the inclusion $N \subset CN$ into the cone.
Then $F_r \subset F_{r+1}$ with quotient $N[1]$ free relative to $\calL$.
Since the composition $N \to P(\alpha_r) \to PM$ has contractible target, it factors through the cone $CN$ and induces the desired map $\alpha_{r+1}:F_{r+1} \to M$.

(\ref{prop:CellApprox:item3}).
The proof is {\em mutatis mutandis} the same as that of \cite[Lemma 14]{myMV} using the fact that when $X$ is affine, any quasi-isomorphism to a finite cell complex has a section, up to homotopy \cite[Theorem III.2.3]{KrizMay}.

(\ref{prop:CellApprox:item4}).
Strictly perfect dg $\A$-modules are clearly compact in $\calK(\A)$.
It follows from (\ref{prop:CellApprox:item3}) that they are also compact in $\D(\A)$.
It also follows from (\ref{prop:CellApprox:item3}) that $\D\sPerf(\A) \to \D(\A)$ is fully faithful.
By (\ref{prop:CellApprox:item2}), $\D(\A)$ is generated by $\A\otimes L$ with $L\in \calL$.
Since $\A\otimes L \in \D\sPerf(\A)$,  the category of compact objects in $\D(\A)$ is the idempotent completion of $\D\sPerf(\A)$, by \cite[Theorem 2.1]{Neeman:GrothendieckDual}.
\end{proof}

Let $X$ be a divisorial scheme and let $f: \A \to \B$ be a homomorphism of quasi-coherent dg $O_X$-algebras.
Since restriction of scalars from $\B$ to $\A$ preserves quasi-isomorphisms, it induces a map on derived categories $f_*:\D(\B) \to \D(\A)$.
In view of Proposition \ref{prop:CellApprox} (\ref{prop:CellApprox:item2}) its left adjoing $Lf^*: \D(\A) \to \D(\B)$ can be computed as $Lf^*(M) = \B \otimes_{\A}F$ where $F \to M$ is a quasi-isomorphism with $F$ locally cellular.

\begin{corollary}
\label{cor:sPerfEq}
Let $X$ be a divisorial scheme, and let $f: \A \to \B$ be a quasi-isomorphism of  quasi-coherent dg $O_X$-algebras.
Then $Lf^*: \D(\A) \to \D(\B)$ is an equivalence with inverse $f_*$.
In particular, $f^*$ induces an equivalence of the derived categories of strictly perfect complexes up to idempotent completion:
$$f^*: \widetilde{\D}(\sPerf\A)  \stackrel{\simeq}{\longrightarrow}  \widetilde{\D}(\sPerf\B).$$
\end{corollary}

\begin{proof}
Unit and counit of adjunction are quasi-isomorphisms because this is the case locally \cite[Proposition III.4.2]{KrizMay}.
\end{proof}

\section{Explicit comparisons with known results}\label{App:ExplicitComparison}
From Theorem \ref{thm:split}, we obtain 
\[
 \mathbb{G}W^{r}(\Gr^{k,n}_{S},(\det\mathcal{Q})^{\otimes (n-k-1)}) \cong  \mathbb{G}W^{r-(n-k)}(\Gr^{k-1,n-1}_{S},(\det\mathcal{Q})^{\otimes (n-k)})\oplus \mathbb{G}W^{r}(\Gr^{k,n-1}_{S},(\det\mathcal{Q})^{\otimes (n-k-1)}).
\]
To be fully explicit, we use Theorem \ref{thm:compute} to compute the terms on the right-hand side of the above isomorphism. By inspection of Theorem \ref{thm:compute}, we need to consider $k$ up to mod $4$ and $(n-k)$ up to mod $2$. 

Considering each case individually, the full computation reads   
\[ 
\mathbb{G}W^{r}(\Gr^{k,n}_{S},(\det\mathcal{Q})^{\otimes (n-k)}) \cong 
\begin{cases}
\mathbb{K}(S)^{\oplus \frac{1}{2}R(k,n-k)} & k(n-k) \,\, \text{odd,} \\
\mathbb{G}W^{r - 2}(S)^{\oplus S(k,n-k)} \oplus \mathbb{K}(S)^{\oplus \frac{1}{2}A(k,n-k)} & (n-k) \,\,  \text{odd and } k\equiv 2 (\text{mod }4),\\
\mathbb{G}W^{r}(S)^{\oplus S(k,n-k)} \oplus \mathbb{K}(S)^{\oplus \frac{1}{2}A(k,n-k)} &  \text{otherwise,} \\
\end{cases}
 \]

\[
\mathbb{G}W^{r}(\Gr^{k,n}_{S},(\det\mathcal{Q})^{\otimes (n-k-1)}) \cong 
\begin{cases}
    \mathbb{G}W^{r-(n-k)}(S)^{\oplus S(k-1,n-k)}\oplus \mathbb{G}W^{r}(S)^{\oplus S(k,n-k-1)}\oplus \mathbb{K}(S)^{\oplus \frac{1}{2}(A(k-1,n-k) + A(k,n-k-1))} \\
    \text{for $k \equiv 0 (\text{mod }4)$, $(n-k) \,\, \text{even}$ or $k \equiv 1 (\text{mod }4)$, $(n-k) \,\, \text{odd}$ },  
    \\\\
    \mathbb{G}W^{r}(S)^{\oplus S(k,n-k-1)}\oplus  \mathbb{K}(S)^{\oplus \frac{1}{2}(R(k-1,n-k) + A(k,n-k-1))} \\
    \text{for $k$ even, $(n-k) \,\, \text{odd}$},
    \\\\
    \mathbb{G}W^{r-(n-k)}(S)^{\oplus S(k-1,n-k)}\oplus  \mathbb{K}(S)^{\oplus \frac{1}{2}(R(k,n-k-1) + A(k-1,n-k))} \\
    \text{for $k$ odd, $(n-k) \,\, \text{even}$},
    \\\\
     \mathbb{G}W^{r-(n-k)}(S)^{\oplus S(k-1,n-k)}\oplus \mathbb{G}W^{r-2}(S)^{\oplus S(k,n-k-1)}\oplus \mathbb{K}(S)^{\oplus \frac{1}{2}(A(k-1,n-k) + A(k,n-k-1))} \\
    \text{for $k \equiv 2 (\text{mod }4)$, $(n-k) \,\, \text{even}$}, 
     \\\\
     \mathbb{G}W^{r-(n-k)-2}(S)^{\oplus S(k-1,n-k)}\oplus \mathbb{G}W^{r}(S)^{\oplus S(k,n-k-1)}\oplus \mathbb{K}(S)^{\oplus \frac{1}{2}(A(k-1,n-k) + A(k,n-k-1))} \\
    \text{for $k \equiv 3 (\text{mod }4)$, $(n-k) \,\, \text{odd}$}.
\end{cases}
\]

Similarly, for $\mathbb{L}$-theory, we have
 \[
   \mathbb{L}^{r}(\Gr^{k,n}_{S},(\det\mathcal{Q})^{\otimes (n-k)}) \cong 
\begin{cases}
0 & k(n-k) \,\, \text{odd,} \\
\mathbb{L}^{r - 2}(S)^{\oplus S(k,n-k)} & (n-k) \,\,  \text{odd and } k\equiv 2 (\text{mod }4),\\
\mathbb{L}^{r}(S)^{\oplus S(k,n-k)}  &  \text{otherwise,} \\
\end{cases}
 \]
\[
\mathbb{L}^{r}(\Gr^{k,n}_{S},(\det\mathcal{Q})^{\otimes (n-k-1)}) \cong 
\begin{cases}
    \mathbb{L}^{r-(n-k)}(S)^{\oplus S(k-1,n-k)}\oplus \mathbb{L}^{r}(S)^{\oplus S(k,n-k-1)}\\
    \text{for $k \equiv 0 (\text{mod }4)$, $(n-k) \,\, \text{even}$ or $k \equiv 1 (\text{mod }4)$, $(n-k) \,\, \text{odd}$ },  
    \\\\
    \mathbb{L}^{r}(S)^{\oplus S(k,n-k-1)} \\
    \text{for $k \,\, \text{even}$, $(n-k) \,\, \text{odd}$  },
    \\\\
    \mathbb{L}^{r-(n-k)}(S)^{\oplus S(k-1,n-k)} \\
    \text{for $k \,\, \text{odd}$, $(n-k) \,\, \text{even}$  },
    \\\\
     \mathbb{L}^{r-(n-k)}(S)^{\oplus S(k-1,n-k)}\oplus \mathbb{L}^{r-2}(S)^{\oplus S(k,n-k-1)}\\
    \text{for $k \equiv 2 (\text{mod }4)$, $(n-k) \,\, \text{even}$}, 
     \\\\
     \mathbb{L}^{r-(n-k)-2}(S)^{\oplus S(k-1,n-k)}\oplus \mathbb{L}^{r}(S)^{\oplus S(k,n-k-1)}\\
    \text{for $k \equiv 3 (\text{mod }4)$, $(n-k) \,\, \text{odd}$}.
\end{cases}
\]

In the following, we check that our results coincide with existing computations known in the literature.    

\subsection{Explicit computation of Witt Groups and Comparisons}
Using the fact that for all $r, i\in \mathbb{Z},L^{r}_{i}(S) \cong W^{r-i}(S)$ \cite[Sections 7]{schlichting17}, Balmer's 4-periodic triangular Witt groups are given as follows: 

 \[
   W^{i}(\Gr^{k,n}_{S},(\det\mathcal{Q})^{\otimes (n-k)}) \cong 
\begin{cases}
0 & k(n-k) \,\, \text{odd,} \\
W^{ i-2}(S)^{\oplus S(k,n-k)} & (n-k) \,\,  \text{odd and } k\equiv 2 (\text{mod }4),\\
W^{i}(S)^{\oplus S(k,n-k)}  &  \text{otherwise } \\
\end{cases}
 \]

\[
W^{i}(\Gr^{k,n}_{S},(\det\mathcal{Q})^{\otimes (n-k-1)}) \cong 
\begin{cases}
    W^{i-(n-k)}(S)^{\oplus S(k-1,n-k)}\oplus W^{i}(S)^{\oplus S(k,n-k-1)}\\
    \text{for $k \equiv 0 (\text{mod }4)$, $(n-k) \,\, \text{even}$ or $k \equiv 1 (\text{mod }4)$, $(n-k) \,\, \text{odd}$ },  
    \\\\
    W^{i}(S)^{\oplus S(k,n-k-1)} \\
    \text{for $k \,\, \text{even}$, $(n-k) \,\, \text{odd}$  },
    \\\\
    W^{i-(n-k)}(S)^{\oplus S(k-1,n-k)} \\
    \text{for $k \,\, \text{odd}$, $(n-k) \,\, \text{even}$  },
    \\\\
     W^{i-(n-k)}(S)^{\oplus S(k-1,n-k)}\oplus W^{i-2}(S)^{\oplus S(k,n-k-1)}\\
    \text{for $k \equiv 2 (\text{mod }4)$, $(n-k) \,\, \text{even}$}, 
     \\\\
     W^{i-(n-k)-2}(S)^{\oplus S(k-1,n-k)}\oplus W^{i}(S)^{\oplus S(k,n-k-1)}\\
    \text{for $k \equiv 3 (\text{mod }4)$, $(n-k) \,\, \text{odd}$}.
\end{cases}
\]
\subsubsection{Comparison with \cite{zibrowius2011witt}}
In particular, we see that our results agree with \cite[Section 4.3]{zibrowius2011witt} in the case $S = \mathbb{C}$. Here, we use the identities $S(k,l) = S(k,l-1)$ when $l$ is odd and $S(k-1,l) + S(k,l-1) = S(k,l)$ when $k$ and $l$ are both even, $W^{i}(\mathbb{F}) = 0 $ for all $i \neq 0 \mod{4}$ for a field $\mathbb{F}$ for which $2$ is invertible \cite[Theorem 5.6]{balmer2001triangular}, and the fact that $W^{0}(\mathbb{C}) = \mathbb{Z}/2\mathbb{Z}$.

\subsubsection{Comparison with \cite{balmer2012witt}}
Let $X$ a regular, noetherian and separated scheme over $\mathbb{Z}[1/2]$. Let $K$ be a line bundle on $X$, $\mathcal{V}$ a vector bundle of rank $d+e$ on $X$, $\pi: \Gr_{X}(d,\mathcal{V}) \rightarrow X$ the structure morphism associated to the Grassmannian of $d$-dimensional subbundles of $\mathcal{V}$ and $\mathcal{T}_{d}$ the tautological bundle of the Grassmannian. Then, an \textit{explicit} isomorphism of $W^{0}(X,\mathcal{O}_{X})$ modules is constructed in \cite[Corollary 7.1]{balmer2012witt}:
\[
 W^{k}\left( \Gr_{X}(d,\mathcal{V}), \pi^{*}(K)\otimes \det(\mathcal{T}_{d})^{\otimes l}\right) \cong \bigoplus_{\substack{\Lambda \,\, \text{even} \,\, \text{s.t} \\ t(\Lambda) = l \in \mathbb{Z}/2\mathbb{Z}}}W^{k-|\Lambda|}\left( X, K \otimes \det(\mathcal{V})^{\otimes-\rho(\Lambda)}\right).
\]
Here, a Young diagram $\Lambda$ contained in a $(d\times e)$-rectangle is called \textit{even} if all the segments of its boundary that are strictly inside the frame have \textit{even length}. The element $t(\Lambda) \in \mathbb{Z}/2\mathbb{Z}$ denotes the class of \textit{half the perimeter} of the Young diagram $\Lambda$ (note, the perimeter of any Young diagram is always even because, for example, it is equal to the perimeter of the smallest rectangle bounding it). The element $\rho(\Lambda) \in \{0,\dots,d\}$ denotes the number of non-zero rows of $\Lambda$.




In particular, for $X = S$ a divisorial scheme, $K = \mathcal{O}_{X}$ and $\mathcal{V} = \mathcal{O}_{X}^{d+e}$, we have an isomorphism 
\begin{align} \label{iso:bcwittgroupfield}
W^{k}\left( \Gr_{S}^{d,d+e}, \det(\mathcal{Q})^{\otimes l}\right)  \cong \bigoplus_{\substack{\Lambda \,\, \text{even} \,\, \text{s.t} \\ t(\Lambda) = l \in \mathbb{Z}/2\mathbb{Z}}}W^{k-|\Lambda|}\left( S\right).
\end{align}
To check that our results coincide, we need to enumerate the even Young diagrams in a $d\times e$ frame. Fortunately, this is done in \cite[Corollary 7.2]{balmer2012witt}. 

Let `4-block' mean `$2\times 2$' square. Then, every even Young diagram is either 
\begin{enumerate}[(a)]
    \item \label{item:even1} A union of 4-blocks,
    \item \label{item:even2} A single row plus 4-blocks, in which case $e$ is necessarily even,
    \item \label{item:even3} A single column plus 4-blocks, in which case $d$ is necessarily even, 
    \item \label{item:even4} A single row and a single column plus 4-blocks, in which case $d$ and $e$ are necessarily odd. 
\end{enumerate}
Note that these possibilities are exclusive and can be enumerated by counting the Young diagrams built by using 4-blocks that fit within the relevant frames. 

We will compute $W^{i}(\Gr_{S}^{d,d+e}, \det(\mathcal{Q})^{\otimes e})$ and $W^{i}(\Gr_{S}^{d,d+e}, \det(\mathcal{Q})^{\otimes e-1})$ using isomorphism (\ref{iso:bcwittgroupfield}) to check that our formulae coincide.

\paragraph{\textit{Computation of $W^{i}(\Gr_{S}^{d,d+e}, \det(\mathcal{Q})^{\otimes e})$} }
\begin{enumerate}
    \item Suppose $de$ is odd. Therefore, $d$ and $e$ are necessarily odd. Thus, we need to count all even Young diagrams $\Lambda$ in the frame $(d\times e)$ such that $t(\Lambda)$ is odd. Using the above characterization, we see that there are no such even diagrams. 
    \item Suppose $e$ is odd and $d \equiv_{4}2$. We need to count all even Young diagrams $\Lambda$ in the frame $(d\times e)$ such that $t(\Lambda)$ is odd. By inspection, we see that we are necessarily in case (\ref{item:even3}) of the above characterization. The number of such Young diagrams are $\binom{\frac{d}{2}+\frac{e-1}{2}}{\frac{d}{2}} = S(d,e)$. Moreover, given such a Young diagram $\Lambda$, we compute $|\Lambda| \equiv_{4} d \equiv_{4} 2$, as the 4-blocks do not contribute to the modulus mod 4. Thus, our computations agree in this case.  
    \item For the case $e$ is odd and $d \equiv_{4}0$, similar reasoning to the above holds, but this time $|\Lambda| \equiv_{4} d \equiv_{4} 0$. For the case $e$ is even, $t(\Lambda)$ is even. As Young diagrams $\Lambda$ in cases (\ref{item:even2}) and (\ref{item:even3}) have $t(\Lambda)$ necessarily odd, we are in case (\ref{item:even1}). The number of Young diagrams in this case is precisely $S(d,e)$, and have a modulus a multiple of 4. 
\end{enumerate}
\paragraph{\textit{Computation of $W^{i}(\Gr_{S}^{d,d+e}, \det(\mathcal{Q})^{\otimes e-1})$} }
\begin{enumerate}
    \item Suppose $e$ is even and $d \equiv_{4} 0$. We need to count all even Young diagrams $\Lambda$ in the frame $(d\times e)$ such that $t(\Lambda)$ is odd. Therefore, we are necessarily in cases (\ref{item:even2}) and (\ref{item:even3}). The number of Young diagrams $\Lambda$ in case (\ref{item:even2}) is $S(d-1,e)$, with modulus $|\Lambda | \equiv_{4} e$. The number of Young diagrams $\Lambda$ in case $(\ref{item:even3})$ is $S(d-1,e)$, with modulus $|\Lambda | \equiv_{4} d \equiv_{4} = 0$. Now suppose $d \equiv_{4} 1$ and $e$ is odd. We need to count all even Young diagrams $\Lambda$ in the frame $(d\times e)$ such that $t(\Lambda)$ is even. Therefore, we are necessarily in cases (\ref{item:even1}) and (\ref{item:even4}). The number of Young diagrams $\Lambda$ in case (\ref{item:even1}) is $S(d,e-1)$, with modulus $|\Lambda | \equiv_{4} 0$. The number of Young diagrams $\Lambda$ in case $(\ref{item:even4})$ is $S(d-1,e)$, with modulus $|\Lambda | \equiv_{4} d +e -1 \equiv_{4} = e$.
    \item Suppose $d$ is even and $e$ is odd.  We need to count all even Young diagrams $\Lambda$ in the frame $(d\times e)$ such that $t(\Lambda)$ is even. Therefore, we are necessarily in case(\ref{item:even1}). The number of Young diagrams $\Lambda$ in case (\ref{item:even1}) is $S(d,e-1)$, with modulus $|\Lambda | \equiv_{4} 0$.
    \item Suppose $d$ is odd and $e$ is even. We need to count all even Young diagrams $\Lambda$ in the frame $(d\times e)$ such that $t(\Lambda)$ is odd. Therefore, we are necessarily in case (\ref{item:even2}). The number of Young diagrams $\Lambda$ in case (\ref{item:even2}) is $S(d-1,e)$, with modulus $|\Lambda | \equiv_{4} e$.
    \item Suppose $d \equiv_{4} 2$ and $e$ is even. We need to count all even Young diagrams $\Lambda$ in the frame $(d\times e)$ such that $t(\Lambda)$ is odd. Therefore, we are necessarily in cases (\ref{item:even2}) and (\ref{item:even3}). The number of Young diagrams $\Lambda$ in case (\ref{item:even2}) is $S(d-1,e)$, with modulus $|\Lambda | \equiv_{4} e$. The number of Young diagrams $\Lambda$ in case (\ref{item:even3}) is $S(d,e-1)$, with modulus $|\Lambda | \equiv_{4} d \equiv_{4} 2$.
    \item Suppose $d \equiv_{4} 3$ and $e$ is odd. We need to count all even Young diagrams $\Lambda$ in the frame $(d\times e)$ such that $t(\Lambda)$ is even. Therefore, we are necessarily in cases (\ref{item:even1}) and (\ref{item:even4}). The number of Young diagrams $\Lambda$ in case (\ref{item:even1}) is $S(d,e-1)$, with modulus $|\Lambda | \equiv_{4} 0$. The number of Young diagrams $\Lambda$ in case (\ref{item:even4}) is $S(d-1,e)$, with modulus $|\Lambda | \equiv_{4} d +e -1\equiv_{4} e+2$.
\end{enumerate}

Thus, our computation coincides with \cite{balmer2012witt}. 

\subsection{Comparisons of Grothendieck--Witt Groups}

\subsubsection{Comparison with \cite{karoubi2021grothendieck} and \cite{rohrbach2022projective}}
Our computation of the Hermitian K-theory of Grassmannians in particular generalises the analogous computation for projective space given in \cite{karoubi2021grothendieck} and \cite{rohrbach2022projective} (the latter computing the Hermitian K-theory of projective bundles). 

Both compute that, for a scheme $X$ over $\mathbb{Z}[1/2]$ with an ample family of line bundles, 
\[
GW^{r}(\mathbb{P}^{n}_{X}, \mathcal{O}(i)) \cong 
\begin{cases}
    GW^{r}(X)\oplus K(X)^{\oplus m} & n = 2m, \,\, i \,\, \text{even}, \\
    GW^{r}(X)\oplus K(X)^{\oplus m} \oplus GW^{r-n}(X) & n = 2m+1, \,\, i \,\, \text{even}, \\
    K(X)^{\oplus m} & n = 2m-1, \,\, i \,\, \text{odd}, \\
    K(X)^{\oplus m} \oplus GW^{r-n}(X) & n = 2m, \,\, i \,\, \text{odd}.
\end{cases}
\]
Setting $k = 1$ (or equivalently $k = n-1$) in our computation of $\mathbb{G}W^{r}(\Gr^{k,n}_{S}, \det\mathcal{Q}^{\otimes i})$, we see that our results coincide with the above. 

\subsubsection{Comparision with \cite{rohrbach2021atiyah}}
In \cite[Theorem 10.3.10]{rohrbach2021atiyah}, the Hermitian K-theory spectrum 

$GW^{r}(\Gr_{\mathbb{F}}^{d,d+e}, (\det\mathcal{Q})^{e})$ is computed for a field $\mathbb{F}$ of characteristic zero and $e$ \textit{even}. It is computed 
$$GW^{r}(\Gr_{\mathbb{F}}^{d,d+e}, (\det\mathcal{Q})^{e}) \cong GW^{r}(\mathbb{F})^{S(d,e)} \oplus K(\mathbb{F})^{\oplus\frac{1}{2}\left( R(d,e) - S(d,e)\right)},$$ 
which is precisely what we compute in this case.

\subsubsection{Comparison with \cite{huang2023connecting}}
For a regular scheme $S$, a vector bundle $\mathcal{V}$  
over $S$ of rank $r$ and a line bundle $L$ over $S$, an \textit{explicit equivalence of spectra} is constructed in \cite[Theorem 10.39]{huang2023connecting}:
\[
GW^{n}(\Gr_{d}(\mathcal{V}), L \otimes \Delta_{d}^{\otimes l}) \cong  \bigoplus_{\Lambda \in \mathfrak{B}^{l}_{d,m}}K(S) \oplus \bigoplus_{\Pi \in \mathfrak{U}^{l}_{d,m}} GW^{n - |\Pi|}(S, L \otimes \det(\mathcal{V})^{\otimes \rho(\Pi)}). 
\]
Here, $\Delta_{d}$ is the determinant of the tautological subbundle of rank $d$, $m \coloneqq r-d$ and $l \in \mathbb{Z}$ is any integer. We will verify our computations coincide in the case $S$ is divisorial, $\mathcal{V} = \mathcal{O}_S^{r}$, $L = \mathcal{O}_S$.

The set $\mathfrak{U}_{d,m}^{l}$ consists of all even Young diagrams $\Pi$ in a $d \times m$ frame such that $t(\Pi) \equiv_{2} l$, where $t(\Pi) \in \mathbb{Z}/2\mathbb{Z}$ is the same element defined by \cite{balmer2012witt}. The element $\rho(\Pi)$ is also the same defined by \cite{balmer2012witt}. The index set $\mathfrak{B}^{l}_{d,m}$ is new and requires some explanation. 

In \cite{huang2023connecting}, the set $\mathfrak{B}_{d,m}$ is defined to be the set of all \textit{buffalo-check} Young diagrams in a $d\times m$ frame. A buffalo-check Young diagram is a pair $(\Lambda,c_{\Lambda})$, where $\Lambda$ is a \textit{K-even} Young diagram and $c_{\Lambda}$ is a \text{center} of K-even Young diagram $\Lambda$. Let us explain these terms. 

Denote by $b(\Lambda)$ the lattice path that goes from lower-left to upper-right corner in the $d\times m$ such that the Young diagram $\Lambda$ lives precisely in the upper-left area above the path, then $b(\Lambda)$ consists of \textit{segments} $s_{1},\dots ,s_{l}$, where we order the segments from left to right. We refer to \cite{huang2023connecting} for the relevant pictures. Then, $\Lambda$ is called \textit{K-even} if 
\begin{enumerate}[(i)]
    \item the first segment $s_{1}$ is vertical \label{item: Keven1}
    \item or, there exists an integer $r(\Lambda) \geq 2$ such that $s_{2}, \dots, s_{r(\Lambda)-1} $ has even length and $s_{r(\Lambda)}$ is vertical of odd length. \label{item:Keven2} 
\end{enumerate}

A \textit{center} for a K-even Young diagram $\Lambda$ is a box in the angle between the segments $s_{1}$ and $s_{2}$ when condition $(\ref{item: Keven1})$ holds, or between $s_{r(\Lambda)}$ and $s_{r(\Lambda)+1}$ when condition $(\ref{item:Keven2})$ holds. In particular, a K-even Young diagram can have at most 2 centers. Again, we refer to \cite{huang2023connecting} for the relevant pictures. 

In particular, the center of a K-even Young diagram \textit{cannot} be located at the intersection of even rows and even columns. This allows \cite{huang2023connecting} to compute 
\[
|\mathfrak{B}_{d,m}| = \binom{d+m}{d} - \binom{\lfloor d/2 \rfloor + \lfloor m/2 \rfloor}{\lfloor d/2 \rfloor } = R(d,m) - S(d,m).
\]
 Moreover, the set $\mathfrak{B}_{d,m}$ is partitioned as $\mathfrak{B}_{d,m} = \mathfrak{B}^{d}_{d,m} \bigsqcup \mathfrak{B}^{d+1}_{d,m}$, where the superscript is taken modulo 2. Here, $\mathfrak{B}^{d}_{d,m}$ are those buffalo-check young diagrams centered at a $(i,j)$ box, for $i,j$ \textit{odd}; and $\mathfrak{B}^{d+1}_{d,m}$ are those buffalo-check young diagrams centered at a $(i,j)$ box, for either $i,j$ \textit{even}. 
Following \cite{huang2023connecting} and defining $\beta^{l}_{d,m} \coloneqq | \mathfrak{B}_{d,m}^{l}|$, it is computed 
\[
\beta^{l}_{d,m} = \begin{cases}
    \frac{1}{2} \left( \binom{d+m}{d} -  \binom{\lfloor d/2 \rfloor + \lfloor m/2 \rfloor}{\lfloor d/2 \rfloor }\right) = \frac{1}{2}(R(d,m) - S(d,m)) & \text{if $d$ is even or $m$ is even,} \\
     \frac{1}{2} \binom{d+m}{d} = \frac{1}{2}R(d,m) & \text{if $d,m,l$ are odd,}\\
     \frac{1}{2} \binom{d+m}{d} -  \binom{\lfloor d/2 \rfloor + \lfloor m/2 \rfloor}{\lfloor d/2 \rfloor} = \frac{1}{2}R(d,m) - S(d,m) & \text{if $d,m$ are both odd and $l$ is even.}
\end{cases}
\]
Therefore, to verify that our computation agrees with the computation given in \cite{huang2023connecting}, it suffices to check that the coefficients $\beta^{l}_{d,m}$ coincide with our coefficients for the $\mathbb{K}$-theory terms for the various cases of $d,m$ and $l$.
\paragraph{\textit{Computation of $\mathbb{G}W^{r} (\Gr_{S}^{d,d+e}, \det(\mathcal{Q})^{\otimes e})$} }
\begin{enumerate}
    \item Suppose $de$ is odd. Then, $\beta^{e}_{d,e} = \frac{1}{2}R(d,e)$.
    \item Suppose $e$ is odd and $d \equiv_{4}2$. Then, $\beta^{e}_{d,e} = \frac{1}{2}(R(d,e) - S(d,e)) = \frac{1}{2}A(d,e)$.
    \item For the case $e$ is odd and $d \equiv_{4}0$, or the case $e$ is even, it is the same as the previous case.
\end{enumerate}

\paragraph{\textit{Computation of $\mathbb{G}W^{r}(\Gr_{S}^{d,d+e}, \det(\mathcal{Q})^{\otimes e-1})$} }
\begin{enumerate}
    \item Suppose $e$ is even and $d \equiv_{4} 0$, or suppose $d \equiv_{4} 1$ and $e$ is odd. Then, in the first case, $\beta^{e-1}_{d,e} = \frac{1}{2}(R(d,e) - S(d,e)) = \frac{1}{2}(A(d-1,e) + A(d,e-1))$. In the second case, $\beta^{e-1}_{d,e} = \frac{1}{2}R(d,e) - S(d,e) = \frac{1}{2}(A(d-1,e) + A(d,e-1))$.
    \item Suppose $d$ is even and $e$ is odd. Then, $\beta^{e-1}_{d,e} = \frac{1}{2}(R(d,e) - S(d,e)) = \frac{1}{2}(R(d-1,e) + A(d,e-1))$.
    \item Suppose $d$ is odd and $e$ is even. Then, $\beta^{e-1}_{d,e} = \frac{1}{2}(R(d,e) - S(d,e)) = \frac{1}{2}(R(d,e-1) + A(d-1,e))$.
    \item Suppose $d \equiv_{4} 2$ and $e$ is even. Then, $\beta^{e-1}_{d,e} = \frac{1}{2}(R(d,e) - S(d,e)) = \frac{1}{2}(A(d-1,e) + A(d,e-1))$.
    \item Suppose $d \equiv_{4} 3$ and $e$ is odd. Then, $\beta^{e-1}_{d,e} = \frac{1}{2}R(d,e) - S(d,e) = \frac{1}{2}(A(d-1,e) + A(d,e-1))$.
\end{enumerate}
Thus, our computation coincides with \cite{huang2023connecting} in this case.

\newpage

\bibliography{grassmannian}
\bibliographystyle{alpha}

\end{document}